\documentclass[11pt,reqno]{amsart}

\usepackage[margin=1in]{geometry}
\usepackage{amsmath,amssymb,amsthm}
\usepackage[T1]{fontenc}
\usepackage[utf8]{inputenc}
\usepackage{enumitem}
\usepackage{xcolor}
\usepackage{hyperref}
\hypersetup{
    colorlinks=true,
    linkcolor=blue!50!black,  
    citecolor=teal,           
    urlcolor=red!50!black     
}

\numberwithin{equation}{section}

\theoremstyle{plain}
\newtheorem{theorem}{Theorem}[section]
\newtheorem{proposition}[theorem]{Proposition}
\newtheorem{lemma}[theorem]{Lemma}
\newtheorem{corollary}[theorem]{Corollary}
\theoremstyle{definition}
\newtheorem{definition}[theorem]{Definition}
\newtheorem{example}[theorem]{Example}
\theoremstyle{remark}
\newtheorem{remark}[theorem]{Remark}

\newcommand{\R}{\mathbb{R}}
\newcommand{\Mm}{\mathcal{M}}                    
\newcommand{\Nn}{\mathcal{N}}                    
\newcommand{\Sym}{S^{2}}                         
\newcommand{\tr}{\operatorname{tr}}
\newcommand{\vgrad}{\operatorname{grad}}
\newcommand{\supp}{\operatorname{supp}}
\newcommand{\Diff}{\operatorname{Diff}}

\newcommand{\LD}[1]{\mathrm{L}_{#1}}             
\newcommand{\ip}[3]{\langle #1,#2\rangle_{#3}}   
\newcommand{\dv}{\mu}                            
\newcommand{\Ee}{\mathsf{E}}                     
\newcommand{\Ham}{\mathcal{H}}                   

\begin{document}

\title[Variational calculus and the Einstein evolution equations]
{A global variational calculus on Fr\'echet manifolds and the
Lagrangian structure of the Einstein evolution equations}

\author{Jos\'e Antonio Vallejo}
\address{Departamento de Matemáticas Fundamentales, Universidad Nacional de Educación a Distancia,
Spain}
\email{jvallejo@mat.uned.es}

\subjclass[2020]{Primary 58E30, 58D17; Secondary 58B20, 83C05, 70H33}
\keywords{Fr\'echet manifolds, manifold of Riemannian metrics, DeWitt metric,
Euler--Lagrange equations, Noether theorem, momentum map, Einstein equations,
Maupertuis--Jacobi principle}

\begin{abstract} 
We develop a variational calculus for curves on Fr\'echet manifolds and apply it to the 
Lagrangian formulation of the Einstein evolution equations on the manifold $\Mm$ of 
Riemannian metrics of a compact manifold. For mechanical Lagrangians associated with weak 
pseudo-Riemannian metrics, we establish the Euler--Lagrange equations, energy 
conservation, and an infinitesimal Noether theorem that does not require the symmetry 
vector field to generate a flow. Applied to the DeWitt Lagrangian, this framework 
identifies the momentum constraint with the vanishing of a Noether momentum map and gives 
a direct proof of its propagation. We also derive the pointwise transport identity 
\[ 
\partial_t\bigl(\Ham_\lambda\,\dv_g\bigr) = 2\alpha\,\delta_g(\delta_g^-k)\,\dv_g\,, 
\] 
which implies propagation of the Hamiltonian constraint. Finally, we prove a Maupertuis--
Jacobi theorem for weak pseudo-Riemannian metrics on Fr\'echet manifolds and show that, 
away from the zero set of the total scalar-curvature potential, constrained Einstein 
evolutions are reparametrized unit-speed geodesics of the conformal DeWitt metric 
$-4\alpha S_\lambda G^-$. 
\end{abstract}

\maketitle
\tableofcontents

\section{Introduction}

Since the work of Dirac, Wheeler and DeWitt \cite{DeW67,Whe64}, and its
geometric formulation by Arnowitt, Deser and Misner \cite{ADM62}
and by Fischer and Marsden \cite{FM72a,FM72b}, it is well known that the
vacuum Einstein equations of General Relativity can be read as the evolution
equations of a dynamical system whose configuration space is the space $\Mm$
of Riemannian metrics on a compact $n-$manifold $M$ (physically, $n=3$). Under this
perspecive a spacetime is not a static object, but the history of a
Riemannian geometry, that is, a curve
\[
g\colon I\subset\R\longrightarrow\Mm,\qquad t\longmapsto g_t ,
\]
in the same way as a mechanical system is a curve in its configuration manifold. The analogy is not merely formal, it can be made quantitative. Indeed, DeWitt
\cite{DeW67} discovered that, for zero shift and unit lapse, the evolution
equations are those of a point particle moving on $\Mm$ under the potential
$-2S(g)$ (with $S$ the total scalar curvature), the kinetic term being
determined by a distinguished weak pseudo-Riemannian metric $G^{-}$ on $\Mm$,
nowadays called the \emph{DeWitt metric}. Fischer and Marsden \cite{FM72b}
developed the corresponding Hamiltonian theory, and
Gil-Medrano \cite{GM96} showed later that the picture does not depend on the
Lorentzian character of the spacetime metric and, what is more surprising,
that inside the natural one-parameter family $G^{c}$ of metrics on $\Mm$
introduced in \cite{GMi91}, the DeWitt metric is the only one for which
such a description is possible.

In a previous work \cite{Val09} the complementary
Lagrangian version was started, which consists in developing a calculus of
variations for curves in Fr\'echet manifolds (Euler--Lagrange equations,
Noether theorems, conservation laws) general enough to contain both classical
particle mechanics and geometrodynamics as literal instances, in such a way
that statements like ``the constraints are conserved'' become theorems of an
infinite dimensional analytical mechanics, and not \emph{ad hoc} computations.
The purpose of the present paper is threefold: 
\begin{enumerate}[label=\textit{\alph*)}] 

\item to develop a self-contained variational framework for curves on Fr\'echet manifolds, 
including the Euler--Lagrange equations, energy conservation, and an infinitesimal Noether 
theorem for mechanical Lagrangians associated with weak pseudo-Riemannian metrics (thus
expanding the work done in \cite{Val09});
 
\item to apply this framework to the DeWitt Lagrangian on the manifold of Riemannian 
metrics, interpreting the momentum constraint as the vanishing of a Noether momentum map 
and deriving a pointwise transport law for the Hamiltonian density, from which the 
propagation of both Einstein constraints follows; 

\item to establish a Maupertuis--Jacobi-type theorem in the weak pseudo-Riemannian Fr\'echet 
setting and to use it to show that, away from the zero set of the total scalar-curvature 
potential, constrained Einstein evolutions are reparametrized geodesics of an explicit 
conformal rescaling of the DeWitt metric. 
\end{enumerate}

\begin{remark}
Throughout the paper the term ``global'' refers
to the coordinate free (chart independent) formulation, and to the single
global chart available on the open cone $\Mm$. It does not refer to the
existence of solutions global in time, nor to a global quotient (superspace)
construction, none of which is claimed here.
\end{remark}

\subsection{Summary of results}\label{ssec:summary}
Sections~\ref{sec:setting}--\ref{sec:noether} contain the general theory,
while Sections~\ref{sec:einstein}--\ref{sec:examples} are devoted to the
applications to relativity. Let us describe the contents in some more detail.

\emph{Analysis} (Section~\ref{sec:varcalc}). After fixing the setting of
manifolds of mappings in the sense of Kriegl and Michor \cite{KM97}
(Section~\ref{sec:setting}), we prove a DuBois--Reymond lemma for weak-$*$
continuous curves in the dual of a Fr\'echet space
(Proposition~\ref{prop:DBR}). The proof given here reduces the statement to 
the scalar case, and in this way it avoids completely the vector valued integration used
in \cite{Val09}. We
then obtain the Euler--Lagrange equations in charts (Theorem~\ref{thm:EL})
and, for Lagrangians of \emph{mechanical type}
$\mathcal{L}=\tfrac12G(v,v)-V\circ\pi$ built from a weak pseudo-Riemannian
metric $G$ admitting a Levi-Civita connection and a potential admitting a
$G-$gradient, in the global form
\begin{equation}\label{eq:mechanicalEL-intro}
\nabla_{\partial_t}c'=-\vgrad V(c)
\end{equation}
(Theorem~\ref{thm:mechEL}). A key technical tool, which can be of use in other
situations, is a differentiation lemma for pairings $t\mapsto \ell(t)(w(t))$
between weak-$*$ $C^{1}$ curves of functionals and $C^{1}$ curves of vectors
(Lemma~\ref{lem:pairing}), whose proof rests on the Banach--Steinhaus theorem
for barrelled spaces. From it, the conservation of the energy
$\Ee_{\mathcal L}(v)=F\mathcal{L}(v)(v)-\mathcal{L}(v)$ along extremals
follows directly (Theorem~\ref{thm:energy}).

\emph{Symmetry} (Section~\ref{sec:noether}). We prove a Noether theorem
(Theorem~\ref{thm:noether}) under an infinitesimal invariance hypothesis
only, allowing in addition invariance up to the total derivative of a
function of position.
On a Fr\'echet manifold a vector field need not have a flow, so the hypothesis
of invariance under a group is too restrictive. For the
action of $\Diff(M)$ on $(\Mm,G^{c})$ the resulting conserved quantities
assemble into an equivariant Noether momentum map
\[
\mathbf{J}\colon T\Mm\longrightarrow \mathfrak{X}(M)',\qquad
\langle \mathbf{J}(g,k),X\rangle=G^{c}_{g}(k,\LD{X}g)
 =2\int_M g\bigl(X,(\delta^{c}_{g}k)^{\sharp}\bigr)\dv_g ,
\]
with values in the continuous dual $\mathfrak{X}(M)'$, where
$\delta^{c}_{g}$ is the $G^{c}-$adjoint of the symmetrized covariant
derivative (Proposition~\ref{prop:momentum},
Proposition~\ref{prop:equivariance}).

\emph{Relativity} (Section~\ref{sec:einstein}). Let $\alpha\neq0$ be a real
constant, $\lambda\in\R$, and consider on $\widetilde M=I\times M$ the
metric $\widetilde g=\alpha\,dt^2+g_t$ determined by a curve $g_t$ in
$\Mm$ (zero shift, constant lapse; $\alpha=-1$ is the Lorentzian vacuum
case). By a theorem of Gil-Medrano \cite{GM96}, $\widetilde g$ is Einstein
with constant $\lambda$ if and only if $g_t$ is an extremal of the
mechanical Lagrangian
\begin{equation}\label{eq:deWittLag-intro}
\mathcal{L}_{\alpha,\lambda}(g,k)
=\tfrac12\,G^{-}_{g}(k,k)-2\alpha\,S_\lambda(g),
\quad
S_\lambda(g)=\int_M\bigl(\tau(g)-\lambda(n-1)\bigr)\dv_g ,
\end{equation}
subject to the two constraints $\delta^{-}_{g}g'=0$ (momentum) and
$\Ham_\lambda(g,g')=0$ (Hamiltonian), where
$\Ham_\lambda(g,k)=\tfrac12\ip{k}{k}{g}^{-}+2\alpha\tau_\lambda(g)$ and
$\tau_\lambda=\tau-\lambda(n-1)$. Our contributions are then:

\begin{enumerate}[label=\textit{\alph*)}]
\item The identification of both constraints with Noether data. The momentum
constraint is the vanishing of the $\Diff(M)-$Noether momentum map, while the
Hamiltonian constraint is the pointwise vanishing of the \emph{integrand}
$\Ham_\lambda$ of the Noether energy, because
$\Ee_{\mathcal{L}_{\alpha,\lambda}}(g,k)=\int_M\Ham_\lambda(g,k)\,\dv_g$
(Proposition~\ref{prop:chargeident}). Thus, the Hamiltonian constraint is a
pointwise refinement of the vanishing of a single (integrated) Noether charge,
and not a Noether charge density by itself.
\item \emph{Propagation of the momentum constraint}
(Theorem~\ref{thm:momprop}). If an extremal verifies
$\delta^{-}_{g_{t_0}}g'_{t_0}=0$ at a single instant, then it verifies it for
all time. The proof follows directly from the Noether theorem and,
contrary to the usual argument through linear first order systems
\cite[Prop.~3.5]{FM72b}, it does not require any uniqueness theory for partial
differential equations (which is a delicate issue in this setting).
\item \emph{A pointwise transport law} (Theorem~\ref{thm:transport}). Along
\emph{any} extremal of $\mathcal{L}_{\alpha,\lambda}$,
\begin{equation}\label{eq:transport-intro}
\partial_t \Ham_\lambda
=-\tfrac12\tr_g(k)\,\Ham_\lambda+2\alpha\,\delta_g(\delta^{-}_{g}k),
\end{equation}
equivalently,
\begin{equation}
\partial_t\bigl(\Ham_\lambda\,\dv_g\bigr)=2\alpha\,\delta_g(\delta^{-}_{g}k)\,\dv_g .
\end{equation}
As a consequence, on those extremals verifying the momentum constraint the
density $\Ham_\lambda\,\dv_{g_t}$ is pointwise constant in time
(Corollary~\ref{cor:pointwise}); in particular the Hamiltonian constraint
propagates (Corollary~\ref{cor:hamprop}), and integrating
\eqref{eq:transport-intro} over $M$ we recover the Noether conservation of
energy. This makes precise, in the Lagrangian language, the
Fischer--Marsden continuity equation \cite[Prop.~3.4]{FM72b} and Misner's
observation on identically vanishing Hamiltonians \cite{Mis57,FM72b}.
\end{enumerate}

\emph{Geometrization} (Section~\ref{sec:jacobi}). Here we prove two results.
The first one is a conformal rescaling lemma for weak pseudo-Riemannian
metrics on Fr\'echet manifolds (Lemma~\ref{lem:conformal}), which says that if
$G$ has a Levi-Civita connection and $f$ is a nowhere vanishing function
admitting a $G-$gradient, then $fG$ has also a Levi-Civita connection, given
by the classical formula. The second one is a Maupertuis--Jacobi theorem
(Theorem~\ref{thm:jacobi}), stating that the extremals of
$\tfrac12G(v,v)-V\circ\pi$ with energy $e$ contained in $\{V\neq e\}$
correspond, through the explicit reparametrization $ds/dt=2(e-V)$, to the unit
speed geodesics of the conformal metric $\widetilde G=2(e-V)\,G$, and
conversely. Combining these two results with those of
Section~\ref{sec:einstein}, we obtain (Theorem~\ref{thm:GRjacobi}) that every
solution of the Einstein evolution system with its constraints, whose curve of
metrics stays in $\{S_\lambda\neq0\}$, is, up to the reparametrization
$ds/dt=-4\alpha S_\lambda(g_t)$, a unit speed geodesic of the conformal DeWitt
metric
\[
\mathcal{G}:=-4\alpha\,S_\lambda(g)\,G^{-} .
\]
For $\alpha=-1$, $\lambda=0$ this amounts to saying that the vacuum
gravitational histories are geodesics of $4S(g)\,G^{-}$. The correspondence
degenerates where $S_\lambda$ vanishes, in particular at every moment of time
symmetry, as we will discuss.

\emph{Examples} (Section~\ref{sec:examples}). For homothetic curves
$g_t=c(t)\,g_0$ through an Einstein metric $g_0$, the whole system reduces to
the \emph{linear} scalar equation $c''=\tfrac{A}{2}-Bc$, with the constraint
playing the role of the first integral of the energy
(Proposition~\ref{prop:homothetic}). The Milne, de Sitter and round $S^{4}$
geometries appear as the three basic solutions, and they illustrate in
particular the independence of the formalism with respect to the signature.

\subsection{Notation and conventions}\label{ssec:notation}
Throughout the paper, $M$ is a compact connected smooth manifold without
boundary, $\dim M=n\ge2$, and $I\subset\R$ is an open interval. We write
$\Sym(M):=\Gamma^{\infty}(\Sym T^{*}M)$ for the Fr\'echet space of smooth
symmetric $(0,2)-$tensor fields with the $C^{\infty}-$topology, and
\[
\Mm:=\{g\in \Sym(M): g \text{ positive definite}\}\subset \Sym(M),
\]
which is an open convex cone, hence a Fr\'echet manifold with the single
global chart given by the inclusion, and with $T_g\Mm=\Sym(M)$ for every $g$
\cite{Ebi70,GMi91,KM97}. For recent results on the geometry of this space, 
the reader can consult \cite{BHM13,Cla10,Cla13}, the older reference \cite{FG89}
is also useful.

For $g\in\Mm$, we denote by $\nabla^{g}$ the Levi-Civita connection, by
$\rho(g)$ the Ricci tensor, by $\tau(g)=\tr_g\rho(g)$ the scalar curvature and
by $\dv_g$ the Riemannian volume density. The pointwise product of
$(0,2)-$tensors is $\ip{h}{k}{g}=\tr(g^{-1}hg^{-1}k)$ and
$\tr_g h=\tr(g^{-1}h)$, while the trace free part of $h$ is
$h_0=h-\frac1n(\tr_gh)g$. For the Laplacian on functions we take the
convention $\Delta_g f=-\tr_g(\nabla^g df)$, so that
$\Delta_g=\delta_g d$, with the divergence given by
\[
(\delta_g h)_{j}=-g^{ik}\nabla_i h_{kj}
\quad(h\in \Sym(M)),\qquad
\delta_g\omega=-g^{ij}\nabla_i\omega_j\quad(\omega\in\Omega^1(M)),
\]
and extended to covariant tensors by the same formula (thus, $\delta_g$ lowers
the covariant order by one). Note that $\delta_g(fg)=-df$ for
$f\in C^{\infty}(M)$. The symmetrized covariant derivative is
$\delta^{*}_g\colon\Omega^{1}(M)\to \Sym(M)$,
$(\delta^{*}_g\omega)_{ij}=\tfrac12(\nabla_i\omega_j+\nabla_j\omega_i)$, so
that $\LD{X}g=2\delta^{*}_g(X^{\flat})$ and, on the closed manifold $M$,
\begin{equation}\label{eq:adjointness}
\int_M \ip{\delta^{*}_g\omega}{h}{g}\dv_g=\int_M\ip{\omega}{\delta_g h}{g}\dv_g,
\qquad
\int_M(\delta_g\omega)\,f\,\dv_g=\int_M\ip{\omega}{df}{g}\dv_g ,
\end{equation}
see \cite[1.59--1.60]{Bes87}. The musical isomorphisms $\flat,\sharp$ are
taken with respect to $g$ (or to $g_t$, as will be clear from the context). We
denote by $\mathfrak{X}(M)$ the Lie algebra of smooth vector fields, by
$\Diff(M)$ the diffeomorphism group, and by $\LD{X}$ the Lie derivative.
Einstein metrics on the $(n+1)-$manifold $\widetilde M$ are normalized by
$\rho(\widetilde g)=\lambda\widetilde g$, and in terms of the cosmological
constant $\Lambda$ of the field equations
$\rho(\widetilde g)-\tfrac12\tau(\widetilde g)\widetilde g
+\Lambda\widetilde g=0$ one has $\Lambda=\tfrac{n-1}{2}\lambda$.

Throughout, smoothness in infinite dimensions is understood in the convenient sense of Kriegl and Michor \cite{KM97}. For maps between open subsets of Fr\'echet spaces, convenient smoothness coincides with Michal--Bastiani smoothness, the differential calculus used by Hamilton in \cite{Ham82}; see \cite[Ch.~I and Thm.~4.11]{KM97}. We denote by $E'$ the continuous dual of a Fr\'echet space $E$. Since every Fr\'echet space is bornological, every bounded linear functional on $E$ is continuous. Consequently, the differential of a conveniently smooth real-valued map on an open subset of $E$ is an element of $E'$.

\section{The manifold of Riemannian metrics and the DeWitt family}
\label{sec:setting}

\subsection{Manifolds modeled on Fr\'echet spaces}\label{ssec:frechet}
Let us briefly recall the facts we will need, referring the reader to
\cite{KM97} for the proofs (see also \cite{Ham82,Mic20}). If $N$ is a finite
dimensional manifold and $M$ is compact, the space $C^{\infty}(M,N)$ is a
smooth manifold modeled on the Fr\'echet spaces $\Gamma^{\infty}(f^{*}TN)$,
with charts built from the Riemannian exponential map of any auxiliary metric
on $N$ (a \emph{local addition}). The resulting structure does not depend on
these choices, and $T_fC^{\infty}(M,N)\cong\Gamma^{\infty}(f^{*}TN)$
\cite[\S42]{KM97}. Open subsets such as $\operatorname{Imm}(M,N)$,
$\operatorname{Emb}(M,N)$ and $\Diff(M)$ inherit the structure, and $\Diff(M)$
is a Fr\'echet Lie group whose Lie algebra is $\mathfrak{X}(M)$ (with a caveat
about a sign in the bracket, which will play no role here), see
\cite[\S43]{KM97}.

For our purposes, the essential example is $\Mm\subset \Sym(M)$, which, as we
have already noted in \S\ref{ssec:notation}, carries a single global chart.
Thus, the calculus on $\Mm$ is literally the calculus on an open subset of a
Fr\'echet space, and we will make a systematic use of this fact. All the
abstract results in Sections~\ref{sec:varcalc}--\ref{sec:jacobi} are therefore
stated for an open set $\Nn$ of a Fr\'echet space $E$ (``the global chart
setting''). They extend word by word to manifolds admitting a local addition,
by working with connectors and chart representatives as in \cite{Val09},
but as $\Mm$ does not need such an extension, we will not insist on this
point.

Two facts about smooth curves will be used constantly \cite[Ch.~I]{KM97}. The
first one is that a map into a Fr\'echet space is smooth if and only if it is
smooth along smooth curves. The second one is that, for a smooth map
$L\colon U\subset E\to\R$ and a smooth curve $c\colon\R\to U$, the chain rule
$\frac{d}{ds}L(c(s))=dL(c(s))(c'(s))$ holds, where $dL(u)\in E'$ is the
(G\^ateaux) differential. Partial derivatives $D_1L,D_2L$ of maps
$L\colon U\times E\to\R$ are defined in the obvious way, and are again
continuous linear functionals in view of the bornological property of $E$.

\subsection{The metrics $G^{c}$ on $\Mm$}\label{ssec:Gc}
Each $g\in\Mm$ splits $T_g\Mm=\Sym(M)$ algebraically as
$V_0(g)\oplus V_1(g)$, where $V_0(g)=\{h:\tr_gh=0\}$ and
$V_1(g)=C^{\infty}(M)\,g$. Following \cite{GMi91,GM96}, for each real
$c\neq0$ we define the pointwise product
\begin{equation}\label{eq:cproduct}
\ip{h}{k}{g}^{c}:=\tr(g^{-1}h_0g^{-1}k_0)+c\,\tr_g(h)\,\tr_g(k),
\qquad h,k\in \Sym(M),
\end{equation}
and, by integration, the weak pseudo-Riemannian metric on $\Mm$
\[
G^{c}_{g}(h,k):=\int_M \ip{h}{k}{g}^{c}\,\dv_g .
\]
For $c=\frac1n$ one recovers the usual $L^2$ (Ebin) metric
$G_g(h,k)=\int_M\ip{h}{k}{g}\dv_g$ \cite{Ebi70,GMi91}. For $c>0$, $G^{c}$
is a weak Riemannian metric, whereas for $c<0$ it is positive definite on
$V_0(g)$ and negative definite on $V_1(g)$. The distinguished value
\[
c^{-}:=\frac{1-n}{n}\qquad(\text{note } c^{-}<0 \text{ for } n\ge2)
\]
gives the \emph{DeWitt metric} $G^{-}:=G^{c^{-}}$ \cite{DeW67,GM96}.

\begin{lemma}\label{lem:cidentities}
For all $h,k\in \Sym(M)$ and $g\in\Mm$:
\begin{enumerate}[label=\textit{\alph*)}]
\item\label{aux1} $\ip{h}{k}{g}^{c}=\ip{h}{k}{g}+\bigl(c-\tfrac1n\bigr)\tr_g(h)\tr_g(k)$;
in particular
\[
\ip{h}{k}{g}^{-}=\tr(g^{-1}hg^{-1}k)-\tr_g(h)\,\tr_g(k).
\]
\item\label{aux2} Each $\ip{\cdot}{\cdot}{g}^{c}$ ($c\neq0$) is a (pointwise)
non degenerate symmetric bilinear form on $\Sym_p(T^{*}M)$ for every
$p\in M$; consequently $G^{c}$ is \emph{weakly non degenerate}, that is, if
$G^{c}_g(h,k)=0$ for all $k\in \Sym(M)$ then $h=0$.
\end{enumerate}
\end{lemma}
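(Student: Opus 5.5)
For part \ref{aux1}, I would expand the trace-free parts directly. Write $h_0=h-\frac1n(\tr_gh)g$ and $k_0=k-\frac1n(\tr_gk)g$. Using $\tr(g^{-1}g\,g^{-1}k)=\tr_gk$ and $\tr(g^{-1}gg^{-1}g)=n$, the product is
\[
\tr(g^{-1}h_0g^{-1}k_0)=\ip{h}{k}{g}-\tfrac2n\tr_gh\,\tr_gk+\tfrac1{n}\tr_gh\,\tr_gk=\ip{h}{k}{g}-\tfrac1n\tr_gh\,\tr_gk .
\]
Adding $c\,\tr_gh\,\tr_gk$ gives the identity. Substituting $c=c^-=\frac{1-n}{n}$ gives the coefficient $c^--\frac1n=-1$, which is the displayed formula for $\ip{h}{k}{g}^{-}$.

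For part \ref{aux2}, I would work at a fixed point $p$ and use the decomposition $\Sym_p(T^*M)=V_0\oplus\R g_p$. Symmetry is clear. By the definition \eqref{eq:cproduct}, this decomposition is orthogonal for $\ip{\cdot}{\cdot}{g}^{c}$, since $h_0$ of a multiple of $g$ vanishes and the trace of an element of $V_0$ vanishes.
\begin{itemize}
\item On $V_0$, the form is $\tr(g^{-1}hg^{-1}k)$, which is positive definite. To see this, choose a $g_p$-orthonormal basis; the form becomes $\sum h_{ij}k_{ij}$.
\item On $\R g_p$, we get $\ip{ag}{bg}{g}^{c}=c\,n^2ab$, which is nondegenerate because $c\neq0$.
\end{itemize}
An orthogonal sum of nondegenerate forms is nondegenerate, so pointwise nondegeneracy follows.

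For weak nondegeneracy, suppose $G^c_g(h,k)=0$ for all $k$. The pointwise form defines a smooth bundle endomorphism $A$ of $\Sym T^*M$ with $\ip{h}{k}{g}^{c}=\ip{Ah}{k}{g}$. Explicitly, $Ah=h+(c-\frac1n)(\tr_gh)g$ by part \ref{aux1}, and $A$ is pointwise invertible by the previous step. Choosing $k=Ah$ gives $\int_M|Ah|_g^2\,\dv_g=0$. Since $Ah$ is continuous, $Ah=0$ everywhere, and invertibility of $A$ then gives $h=0$.

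No step is a real obstacle. The only point needing care is producing a test tensor $k$ that is globally smooth, and the choice $k=Ah$ handles this cleanly.
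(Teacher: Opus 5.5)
Your proof is correct. Part (a) is the paper's computation run in the opposite direction. The paper substitutes $h=h_0+\frac1n(\tr_gh)g$ into $\ip{h}{k}{g}$, whereas you expand $\tr(g^{-1}h_0g^{-1}k_0)$. The pointwise half of (b) also matches the paper. The paper tests against $k=h_0$ and $k=g$, which amounts to your orthogonal splitting $V_0\oplus\R g_p$.

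The genuine difference is the passage from pointwise to weak nondegeneracy. The paper argues locally. If $h(p_0)\neq0$, it chooses $k_{p_0}$ with $\ip{h}{k_{p_0}}{g}^{c}(p_0)>0$, extends it to a global section, and multiplies by a nonnegative bump function to force $G^{c}_g(h,k)>0$.

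You instead represent the indefinite product through the positive one, $\ip{h}{k}{g}^{c}=\ip{Ah}{k}{g}$. Here $Ah=h+(c-\frac1n)(\tr_gh)g$ is the identity on $V_0$ and multiplies $g$ by $cn\neq0$, so it is visibly smooth and invertible. You then use the single global test tensor $k=Ah$, which reduces everything to positivity of the $L^2$ product.

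Your route is shorter and gives an explicit witness, with no extension or bump construction. The paper's localization argument needs no representing endomorphism, only continuity and pointwise nondegeneracy of the form. Your device is the one the paper itself uses later in Proposition~\ref{prop:momentum}(c), where it takes $X=(\delta^{c}_gk)^{\sharp}$.
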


\begin{proof}\mbox{}

\ref{aux1} Insert $h=h_0+\frac1n(\tr_gh)g$, $k=k_0+\frac1n(\tr_gk)g$ into
$\ip{h}{k}{g}$ and use $\tr_g(h_0)=0$, $\ip{g}{g}{g}=n$, to get
$\ip{h}{k}{g}=\ip{h_0}{k_0}{g}+\frac1n\tr_g(h)\tr_g(k)$. Subtracting this from
\eqref{eq:cproduct} gives the identity, and for $c=c^{-}$ we have
$c^{-}-\frac1n=-1$.

\ref{aux2} Pointwise, if $\ip{h}{k}{g}^{c}=0$ 
for all $k$ at $p$, testing with
$k=h_0$ gives $|h_0|_g^2(p)=0$ and testing with $k=g$ gives
$cn\,\tr_gh(p)=0$, so $h(p)=0$. As for the global statement, suppose that
$h(p_0)\neq0$ at some $p_0\in M$. By pointwise non degeneracy there is
$k_{p_0}\in \Sym_{p_0}$ with $\ip{h}{k_{p_0}}{g}^{c}(p_0)\neq0$ and,
replacing $k_{p_0}$ by $-k_{p_0}$ if necessary, we may take
$\ip{h}{k_{p_0}}{g}^{c}(p_0)>0$. Extend $k_{p_0}$ to $k\in \Sym(M)$. Then
$\ip{h}{k}{g}^{c}>0$ on a neighborhood of $p_0$ and, multiplying $k$ by a
nonnegative bump function supported there, we get
$\int_M\ip{h}{k}{g}^{c}\dv_g>0$, so $G^{c}_g(h,k)\neq0$.
\end{proof}

\subsection{Geodesics, gradients and divergences}\label{ssec:christoffel}
The Levi-Civita connection of $G^{c}$ exists, and was computed in
\cite{GMi91} (on the larger space of all non degenerate $(0,2)-$tensor
fields, of which $\Mm$ is a geodesically closed submanifold; see also
\cite[Prop.~3.1]{GM96}). Connections will be encoded, in the global chart, by
their Christoffel map, with the sign convention of \cite{GMi91,GM96}.
This is a smooth map $\Gamma\colon \Nn\to L^{2}_{\mathrm{sym}}(E;E)$,
$g\mapsto\Gamma_g$, where $L^{2}_{\mathrm{sym}}(E;F)$ denotes the space of
continuous symmetric bilinear maps $E\times E\to F$ (smoothness of
$\Nn$-valued maps into these spaces being understood, as everywhere here, in
the convenient calculus of \cite{KM97} via joint evaluation), which defines
the covariant derivative
\begin{equation}\label{eq:covder}
\nabla_{\partial_t}h:=h'-\Gamma_{c(t)}(c'(t),h(t))
\end{equation}
of a vector field $h(t)$ along a curve $c(t)$ (and similarly along smooth
maps of several variables), so that the geodesic equation reads
$c''=\Gamma_c(c',c')$. In this language, being free of torsion amounts to the
symmetry of $\Gamma_g$, and the metricity of $\nabla$ for a weak metric $G$ is
the identity (involving the Fréchet differential $D$)
\begin{equation}\label{eq:metricity}
DG(g)(l)(h,k)+G_g\bigl(\Gamma_g(l,h),k\bigr)+G_g\bigl(h,\Gamma_g(l,k)\bigr)=0,
\qquad l,h,k\in E,
\end{equation}
as one checks by evaluating $X\,G(Y,Z)=G(\nabla_XY,Z)+G(Y,\nabla_XZ)$ on
constant fields. When a symmetric $\Gamma$ verifying \eqref{eq:metricity}
exists it is unique (Lemma \ref{lem:LCunique} below), and we call $\nabla$
\emph{the} Levi-Civita connection of $G$.

For $G^{c}$ the Christoffel map is, by \cite{GMi91,GM96},
\begin{equation}\label{eq:spray}
\Gamma^{c}_{g}(h,h)=hg^{-1}h-\tfrac12\tr_g(h)\,h+\tfrac{1}{4cn}\ip{h}{h}{g}^{c}\,g,
\end{equation}
(extended to $\Gamma^{c}_g(h,k)$ by polarization), so that for the DeWitt
value $c^{-}=\frac{1-n}{n}$, that is, $cn=1-n$,
\begin{equation}\label{eq:sprayminus}
\Gamma^{-}_{g}(h,h)=hg^{-1}h-\tfrac12\tr_g(h)\,h-\tfrac{1}{4(n-1)}\ip{h}{h}{g}^{-}\,g .
\end{equation}
For $n=3$ this reads
$\Gamma^{-}_g(h,h)=hg^{-1}h-\tfrac12\tr_g(h)h-\tfrac18\ip{h}{h}{g}^{-}g$.

A \emph{Riemannian functional} is a smooth $F\colon\Mm\to\R$ such that
$F(\varphi^{*}g)=F(g)$ for all $\varphi\in\Diff(M)$ \cite[4.1]{Bes87}. We
will use the volume $V(g)=\int_M\dv_g$, the total scalar curvature
$S(g)=\int_M\tau(g)\dv_g$ and, for $\lambda\in\R$,
\begin{equation}\label{eq:Slambda}
S_\lambda(g):=\int_M\tau_\lambda(g)\,\dv_g,\qquad
\tau_\lambda(g):=\tau(g)-\lambda(n-1),
\end{equation}
so that $S_\lambda=S-\lambda(n-1)V$. A functional $F$ \emph{admits a
$G^{c}-$gradient} if there exists a smooth
$\vgrad^{c}F\colon\Mm\to \Sym(M)$ with
$dF(g)(h)=G^{c}_g(\vgrad^{c}F(g),h)$ for all $h$, and by weak non degeneracy
it is then unique. From the first variation formulas
\begin{equation}\label{eq:variations}
D\dv(g)(h)=\tfrac12\tr_g(h)\,\dv_g,
\quad
D\tau(g)(h)=\Delta_g(\tr_gh)+\delta_g\delta_g h-\ip{\rho(g)}{h}{g}
\end{equation}
\cite[1.174]{Bes87} one obtains \cite[Lemmas 3.2, 3.3]{GM96}:
\begin{equation}\label{eq:gradients}
\vgrad^{c}V(g)=\frac{1}{2cn}\,g,
\quad
\vgrad^{c}S(g)=-\rho_0(g)+\frac{n-2}{2cn^{2}}\,\tau(g)\,g,
\end{equation}
and hence, for the DeWitt metric ($cn=1-n$, $cn^{2}=n(1-n)$),
\begin{equation}\label{eq:gradminus}
\vgrad^{-}V(g)=-\frac{1}{2(n-1)}\,g,
\quad
\vgrad^{-}S(g)=-\rho(g)+\frac{1}{2(n-1)}\,\tau(g)\,g,
\end{equation}
\begin{align}\label{eq:gradSlambda}
\vgrad^{-}S_\lambda(g)
=& \vgrad^{-}S(g)-\lambda(n-1)\,\vgrad^{-}V(g) \nonumber\\
=& -\rho(g)+\frac{1}{2(n-1)}\,\tau(g)\,g+\frac{\lambda}{2}\,g .
\end{align}
(The middle formula in \eqref{eq:gradminus} follows from
\eqref{eq:gradients} by
$-\rho_0+\frac{n-2}{2n(1-n)}\tau g
=-\rho+\bigl(\frac1n-\frac{n-2}{2n(n-1)}\bigr)\tau g
=-\rho+\frac{1}{2(n-1)}\tau g$.)

Finally, the \emph{$G^{c}-$divergence} $\delta^{c}_g$ is the formal
$G^{c}-$adjoint of $\delta^{*}_g$, that is,
$G^{c}_g(\delta^{*}_g\omega,h)=\int_M\ip{\omega}{\delta^{c}_gh}{g}\dv_g$
for all $\omega\in\Omega^{1}(M)$.

\begin{lemma}\label{lem:deltac}
$\delta^{c}_{g}h=\delta_g(h_0)-c\,d(\tr_gh)
=\delta_gh+\bigl(\tfrac1n-c\bigr)d(\tr_gh)$; in particular
\begin{equation}\label{eq:deltaminus}
\delta^{-}_{g}h=\delta_gh+d(\tr_gh)=\delta_g\bigl(h-(\tr_gh)\,g\bigr).
\end{equation}
Moreover, we have $G^{c}_g(k,\LD{X}g)=2\int_M\ip{X^{\flat}}{\delta^{c}_gk}{g}\dv_g$
for every $X\in\mathfrak{X}(M)$, and
$\delta^{c}_g\bigl(\vgrad^{c}F(g)\bigr)=0$ for every Riemannian functional
$F$ admitting a $G^{c}-$gradient.
\end{lemma}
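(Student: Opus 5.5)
We first compute $\delta^{c}_g$ directly from its defining adjointness, using the splitting $h=h_0+\frac1n(\tr_gh)g$. By Lemma~\ref{lem:cidentities}\ref{aux1},
\[
\ip{\delta^{*}_g\omega}{h}{g}^{c}=\ip{\delta^{*}_g\omega}{h}{g}+\bigl(c-\tfrac1n\bigr)\tr_g(\delta^{*}_g\omega)\,\tr_g(h).
\]
Since $\tr_g(\delta^{*}_g\omega)=g^{ij}\nabla_i\omega_j=-\delta_g\omega$, integrating and applying the two identities \eqref{eq:adjointness} gives
\[
G^{c}_g(\delta^{*}_g\omega,h)=\int_M\ip{\omega}{\delta_gh}{g}\dv_g-\bigl(c-\tfrac1n\bigr)\int_M\ip{\omega}{d(\tr_gh)}{g}\dv_g .
\]
This yields $\delta^{c}_gh=\delta_gh+(\frac1n-c)\,d(\tr_gh)$. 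Uniqueness of the adjoint follows from nondegeneracy of the $L^2$ pairing on $1$-forms, proved by the same bump-function argument as in Lemma~\ref{lem:cidentities}. For the other form, the convention $\delta_g(fg)=-df$ gives $\delta_g(h_0)=\delta_gh+\frac1n d(\tr_gh)$, hence $\delta_g(h_0)-c\,d\tr_gh$ agrees with the above. For $c=c^{-}$ we have $\frac1n-c^{-}=1$, so $\delta^{-}_gh=\delta_gh+d(\tr_gh)=\delta_g(h-(\tr_gh)g)$, again using $\delta_g(fg)=-df$.

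The Lie derivative formula is immediate from $\LD{X}g=2\delta^{*}_g(X^\flat)$, the symmetry of $G^{c}_g$, and the definition of $\delta^{c}_g$ with $\omega=X^\flat$.

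For the last claim, take a Riemannian functional $F$ and $X\in\mathfrak{X}(M)$ with flow $\varphi_s$, which exists because $M$ is compact. By invariance, $F(\varphi_s^{*}g)=F(g)$ for all $s$. The curve $s\mapsto\varphi_s^{*}g$ is a smooth curve in $\Mm$ with derivative $\LD{X}g$ at $s=0$, so the chain rule from \S\ref{ssec:frechet} gives $0=dF(g)(\LD{X}g)=G^{c}_g(\vgrad^{c}F(g),\LD{X}g)$. By the formula just proved, this equals $2\int_M\ip{X^\flat}{\delta^{c}_g\vgrad^{c}F(g)}{g}\dv_g$. Since $X$ is arbitrary and every $1$-form is some $X^\flat$, the same nondegeneracy argument gives $\delta^{c}_g\vgrad^{c}F(g)=0$.

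The main obstacle is the technical point that $s\mapsto\varphi_s^{*}g$ is smooth into the Fréchet space $\Sym(M)$ with the stated derivative. This follows from smoothness of the flow $(s,p)\mapsto\varphi_s(p)$ together with the convenient-calculus criterion that smoothness can be tested along curves. The remaining steps are sign bookkeeping.
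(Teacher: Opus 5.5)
Your proof is correct and follows essentially the same route as the paper. You compute $\delta^{c}_g$ from the adjointness relation using Lemma~\ref{lem:cidentities}\ref{aux1}, \eqref{eq:adjointness} and $\delta_g(fg)=-df$, and you differentiate $F(\varphi_s^{*}g)=F(g)$ along the flow of $X$ for the last claim; the only cosmetic difference is that the paper splits the product as $\ip{a}{h_0}{g}+c\,\tr_g(a)\tr_g(h)$ and reaches the $\delta_g(h_0)$ form first, whereas you reach $\delta_gh+(\tfrac1n-c)\,d(\tr_gh)$ first.
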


\begin{proof}
Write $\ip{a}{h}{g}^{c}=\ip{a}{h_0}{g}+c\,\tr_g(a)\tr_g(h)$ (by
Lemma~\ref{lem:cidentities}\,(a) along with $\ip{a}{h_0}{g}=\ip{a_0}{h_0}{g}$).
With $a=\delta^{*}_g\omega$ we have $\tr_g(\delta^{*}_g\omega)
=g^{ij}\nabla_i\omega_j=-\delta_g\omega$, so by \eqref{eq:adjointness}
\begin{align*}
G^{c}_g(\delta^{*}_g\omega,h)
=& \int_M\Bigl[\ip{\omega}{\delta_g(h_0)}{g}-c\,(\delta_g\omega)\,\tr_gh\Bigr]\dv_g\\
=&\int_M\ip{\omega}{\,\delta_g(h_0)-c\,d(\tr_gh)}{g}\,\dv_g .
\end{align*}
This proves the formula. The second expression follows from
$\delta_g(h_0)=\delta_gh+\frac1n d(\tr_gh)$ (use $\delta_g(fg)=-df$), and
\eqref{eq:deltaminus} from $\frac1n-c^{-}=1$. The displayed identity for
$\LD{X}g=2\delta^{*}_g(X^{\flat})$ is immediate. The last claim is
\cite[Prop.~3.4 and 2.6]{GM96}. Indeed, differentiating
$F(\varphi_s^{*}g)=F(g)$ along the flow $\varphi_s$ of $X$ gives
$0=dF(g)(\LD{X}g)=G^{c}_g(\vgrad^{c}F(g),2\delta^{*}_gX^{\flat})
=2\int\ip{X^{\flat}}{\delta^{c}_g\vgrad^{c}F(g)}{g}\dv_g$ for all $X$ and,
since $g$ is a positive definite metric on one-forms and $X$ (hence
$X^{\flat}$) is arbitrary, this forces $\delta^{c}_g\vgrad^{c}F(g)=0$.
\end{proof}

\begin{remark}\label{rem:momconstraintform}
Formula \eqref{eq:deltaminus} says that, with $k=g'$, the equation
$\delta^{-}_gg'=0$ is exactly the classical ADM momentum constraint
$\nabla^{j}\bigl(k_{ij}-(\tr_gk)g_{ij}\bigr)=0$ (up to the overall sign of
$\delta_g$), cf.\ \cite{ADM62,FM72b}.
\end{remark}

\section{A variational calculus for curves in Fr\'echet manifolds}
\label{sec:varcalc}

Throughout this section and the next one, $E$ is a Fr\'echet space,
$\Nn\subset E$ is open (the configuration manifold in its global
chart), $T\Nn=\Nn\times E$ with projection $\pi$, and
$\mathcal{L}\in C^{\infty}(T\Nn,\R)$ is a \emph{Lagrangian}. We write
$L=\mathcal{L}$ for its (tautological) chart representative, a smooth map
$L\colon \Nn\times E\to\R$. Curves $c\colon I\to\Nn$ are smooth, with
$I\subset\R$ open.

\subsection{Action functionals and critical curves}\label{ssec:critical}

\begin{definition}\label{def:variation}
A \emph{variation with compact support} of $c\in C^{\infty}(I,\Nn)$ is a
smooth map $c_{\bullet}\colon\R\times I\to\Nn$, $(s,t)\mapsto c_s(t)$,
with $c_0=c$ and such that there is a compact $K\subset I$ with
$c_s(t)=c(t)$ for all $s$ and all $t\notin K$. Its \emph{infinitesimal
generator} is $A:=\partial_s|_{0}c_s\in C^{\infty}(I,E)$, which verifies
$\supp A\subset K$.
\end{definition}

\begin{lemma}\label{lem:realisation}
For every $A\in C^{\infty}(I,E)$ with compact support there is a variation
with compact support of $c$ whose generator is $A$. If $\Nn$ is in
addition convex (for instance $\Nn=\Mm$), one may take
$c_s(t)=c(t)+s\,\chi(s)A(t)$ with $\chi$ a suitable cutoff.
\end{lemma}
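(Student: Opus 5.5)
The plan is to build the variation explicitly as $c_s(t)=c(t)+\sigma(s)A(t)$, with $\sigma\colon\R\to\R$ a smooth function chosen small enough to keep the curve inside $\Nn$. The general case needs a scalar $\sigma$ with $\sigma(0)=0$, $\sigma'(0)=1$ and with $|\sigma|$ bounded by an $\varepsilon>0$ to be fixed below. One admissible choice is $\sigma(s)=\varepsilon\tanh(s/\varepsilon)$. Another is $\sigma(s)=s\chi(s)$, where $\chi$ is a bump function equal to $1$ near $0$ and supported in $(-\varepsilon,\varepsilon)$. The second choice gives the form announced in the statement.

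Let $K=\supp A$, which is compact. The set $c(K)\subset\Nn$ is compact because $c$ is continuous. The key step is to find a single $\varepsilon>0$ such that $c(t)+rA(t)\in\Nn$ for all $t\in K$ and all $|r|\le\varepsilon$.

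The map $\Phi\colon\R\times I\to E$, $\Phi(r,t)=c(t)+rA(t)$, is continuous. Smooth curves into a Fréchet space are continuous, and addition and scalar multiplication are jointly continuous. Hence $\Phi^{-1}(\Nn)$ is open in $\R\times I$. It contains the compact set $\{0\}\times K$. The tube lemma then yields $\varepsilon>0$ with $[-\varepsilon,\varepsilon]\times K\subset\Phi^{-1}(\Nn)$. For $t\notin K$ we have $A(t)=0$, so $c_s(t)=c(t)\in\Nn$. Consequently $c_s(t)\in\Nn$ for all $(s,t)$, since $|\sigma(s)|\le\varepsilon$.

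It remains to verify the remaining properties of a variation with compact support.

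\emph{Smoothness.} The map $(s,t)\mapsto c(t)+\sigma(s)A(t)$ is a sum of a smooth map and the product of a smooth scalar function with a smooth $E$-valued map. It is therefore smooth in the convenient sense, which can be checked along smooth curves in $\R^2$.

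\emph{Initial curve and support.} We have $c_0=c$ because $\sigma(0)=0$. Also $c_s(t)=c(t)$ for $t\notin K$, uniformly in $s$.

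\emph{Generator.} We compute $\partial_s|_0c_s=\sigma'(0)A=A$.

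If $\Nn$ is convex, the same formula works with $\sigma(s)=s\chi(s)$. The cutoff $\chi$ is needed only because $\Nn$ is open rather than all of $E$, so $s$ cannot be taken arbitrarily large. For $\Nn=\Mm$, the bound $|\sigma|\le\varepsilon$ preserves positive definiteness on the compact set $K$, by the argument above.

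I expect the main obstacle to be the uniform choice of $\varepsilon$: openness of $\Nn$ in a Fréchet space gives only local neighbourhoods. The obstacle is resolved by the joint continuity of $\Phi$ together with compactness of $K$ (tube lemma), so no norm structure on $E$ is needed.
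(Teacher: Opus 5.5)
Your proof is correct and is essentially the paper's argument: the paper also takes the open set $\{(s,t): c(t)+sA(t)\in\Nn\}$, uses compactness of $K=\supp A$ to get a uniform $\varepsilon$, and sets $c_s(t)=c(t)+s\chi(s)A(t)$ with $\supp\chi\subset(-\varepsilon,\varepsilon)$ for an arbitrary open $\Nn$, so convexity is not actually used there either. Your $\tanh$ alternative is a harmless variant. The only implicit point, which the paper shares, is that $\chi$ should take values in $[0,1]$ so that $|s\chi(s)|<\varepsilon$.
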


\begin{proof}
The set $W=\{(s,t)\in\R\times I: c(t)+sA(t)\in\Nn\}$ is open (preimage of
the open $\Nn$ under a continuous map) and contains $\{0\}\times I$. Since
$\supp A=:K$ is compact, there is $\varepsilon>0$ with
$(-\varepsilon,\varepsilon)\times K\subset W$, and outside $K$ the curve is
untouched. Choose now $\chi\in C^{\infty}(\R)$, $\chi\equiv1$ near $0$,
$\supp\chi\subset(-\varepsilon,\varepsilon)$, and put
$c_s(t)=c(t)+s\chi(s)A(t)$. This is a smooth map into $\Nn$, it equals $c$
off $K$, and $\partial_s|_0c_s=A$. (On a general manifold with local
addition $\theta$, take $c_s(t)=\theta(s\chi(s)A(t))$.)
\end{proof}

Notice that the simpler
$c_s(t)=c(t)+sA(t)$ has values in $\Nn$ only for $|s|$ small, and so it
defines a variation only after the cutoff in $s$, but this is not a problem
because only the germ at $s=0$ enters Definition~\ref{def:critical}.

\begin{definition}\label{def:critical}
The \emph{action density} of $\mathcal{L}$ is
$f_{\mathcal{L}}(c):=t\mapsto L(c(t),c'(t))$. A curve
$c\in C^{\infty}(I,\Nn)$ is \emph{critical} for $\mathcal{L}$ if for every
variation with compact support $c_\bullet$ of $c$, and some (equivalently,
any) relatively compact open $J\subset I$ containing the compact set of
Definition~\ref{def:variation},
\[
\frac{d}{ds}\Big|_{s=0}\int_{J}L\bigl(c_s(t),c_s'(t)\bigr)\,dt=0 .
\]
\end{definition}

The equivalence between ``some'' and ``any'' holds because outside $K$ the
integrand does not depend on $s$. This is the notion of critical point of
\cite{GM01,Val09}, but expressed without any reference to a manifold
structure on the space of curves. Notice that the derivative with respect to $s$
exists, because $(s,t)\mapsto L(c_s(t),c_s'(t))$ is a smooth real function of
two real variables, being the composition of the smooth curve
$(s,t)\mapsto(c_s(t),c_s'(t))\in\Nn\times E$ with the conveniently smooth $L$,
and smooth maps send smooth curves to smooth curves \cite[Ch.~I]{KM97}.

\subsection{Basic lemmas}\label{ssec:analytic}
A map $\ell\colon J\to E'$ ($J\subset\R$ an interval) is called
\emph{weak-$*$ continuous} (resp.\ \emph{weak-$*$ $C^1$}) if
$t\mapsto\ell(t)(y)$ is continuous (resp.\ $C^1$) for each $y\in E$.

\begin{lemma}\label{lem:BS}
Let $E$ be a Fr\'echet (more generally, barrelled locally convex) space
and $\ell\colon[a,b]\to E'$ weak-$*$ continuous. Then
$\{\ell(t):t\in[a,b]\}$ is equicontinuous, that is, there is a continuous
seminorm $p$ on $E$ with $|\ell(t)(y)|\le p(y)$ for all $t,y$, and
consequently $(t,y)\mapsto\ell(t)(y)$ is jointly continuous on
$[a,b]\times E$.
\end{lemma}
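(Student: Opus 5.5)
The plan is to reduce the statement to the uniform boundedness principle for barrelled spaces, and then to derive joint continuity from equicontinuity by a standard $\varepsilon/2$ argument.

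\emph{Step 1: pointwise boundedness.} For each fixed $y\in E$ the map $t\mapsto\ell(t)(y)$ is continuous on the compact interval $[a,b]$. It is therefore bounded, so $\sup_{t}|\ell(t)(y)|<\infty$. In other words, the family $H=\{\ell(t):t\in[a,b]\}\subset E'$ is weak-$*$ bounded.

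\emph{Step 2: Banach--Steinhaus.} In a barrelled space, every weak-$*$ bounded subset of $E'$ is equicontinuous. To see this directly, consider
\[
B=\{y\in E:\ |\ell(t)(y)|\le1\ \text{for all }t\}.
\]
This set is closed, absolutely convex, and absorbing by Step 1, so it is a barrel and hence a neighbourhood of $0$. Its Minkowski functional is then a continuous seminorm $p$, and it satisfies $|\ell(t)(y)|\le p(y)$ for all $t$ and $y$. Every Fréchet space is barrelled, by Baire, so the Fréchet case is included.

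\emph{Step 3: joint continuity.} Fix $(t_0,y_0)$ and $\varepsilon>0$, and write
\[
|\ell(t)(y)-\ell(t_0)(y_0)|\le|\ell(t)(y-y_0)|+|\ell(t)(y_0)-\ell(t_0)(y_0)|\le p(y-y_0)+|\ell(t)(y_0)-\ell(t_0)(y_0)|.
\]
The first term is less than $\varepsilon/2$ on the neighbourhood $\{p(y-y_0)<\varepsilon/2\}$ of $y_0$, uniformly in $t$. The second term is less than $\varepsilon/2$ for $t$ near $t_0$, by weak-$*$ continuity at the single vector $y_0$. Together these give joint continuity at $(t_0,y_0)$.

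The only substantive point is Step 2, and it is handled by invoking, or reproving as above, the barrel argument. No step needs metrizability beyond barrelledness, which matches the parenthetical generality in the statement.
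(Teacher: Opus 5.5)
Your proposal is correct and follows the paper's proof: boundedness of each $t\mapsto\ell(t)(y)$ on the compact interval, the Banach--Steinhaus theorem for barrelled spaces to obtain the seminorm $p$, and the same estimate $|\ell(t)(y)-\ell(t_0)(y_0)|\le p(y-y_0)+|(\ell(t)-\ell(t_0))(y_0)|$ for joint continuity. The only difference is that you reprove the equicontinuity step directly, using the barrel $B$ and its Minkowski functional, where the paper simply cites Tr\`eves, Thm.~33.1.
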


\begin{proof}
For each $y$, the function $t\mapsto\ell(t)(y)$ is continuous on the compact
$[a,b]$, hence bounded. Thus the family is pointwise bounded, and the
Banach--Steinhaus theorem for barrelled spaces \cite[Thm.~33.1]{Tre06} gives
the equicontinuity, that is, the seminorm $p$. As for the joint continuity,
$|\ell(t)(y)-\ell(t_0)(y_0)|
\le p(y-y_0)+|(\ell(t)-\ell(t_0))(y_0)|$.
\end{proof}

\begin{proposition}[DuBois--Reymond lemma]\label{prop:DBR}
Let $E$ be a Fr\'echet space, $[a,b]\subset\R$, and let
$f,\gamma\colon[a,b]\to E'$ be weak-$*$ continuous. The following are
equivalent:
\begin{enumerate}[label=\textit{\alph*)}]
\item\label{dbr1} $\displaystyle\int_a^b\bigl[f(t)(\mu(t))+\gamma(t)(\mu'(t))\bigr]dt=0$
for every $\mu\in C^{\infty}([a,b],E)$ with
$\supp\mu\subset(a,b)$;
\item\label{dbr2} $\gamma$ is weak-$*$ $C^{1}$ with
$\frac{d}{dt}\bigl[\gamma(t)(y)\bigr]=f(t)(y)$ for every $y\in E$,
equivalently
$\gamma(t)(y)=\gamma(a)(y)+\int_a^tf(s)(y)\,ds$ for all $t,y$.
\end{enumerate}
\end{proposition}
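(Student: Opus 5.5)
The plan is to reduce both directions to the classical scalar DuBois--Reymond lemma by testing with separated variations $\mu(t)=\varphi(t)\,y$, where $\varphi\in C^\infty_c((a,b))$ is real valued and $y\in E$ is fixed. The only genuinely infinite dimensional work is in the converse direction, where a general $\mu$ must be approximated by sums of such separated terms.

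\emph{(a)$\Rightarrow$(b).} Fix $y\in E$ and take $\mu(t)=\varphi(t)y$. Condition (a) then reads
\[
\int_a^b\bigl[F_y(t)\varphi(t)+\Gamma_y(t)\varphi'(t)\bigr]dt=0,
\qquad F_y(t):=f(t)(y),\quad \Gamma_y(t):=\gamma(t)(y),
\]
where $F_y$ and $\Gamma_y$ are continuous scalar functions by weak-$*$ continuity. Set $P(t)=\int_a^tF_y$ and integrate by parts. This gives $\int(\Gamma_y-P)\varphi'=0$ for all $\varphi\in C^\infty_c((a,b))$. By the classical scalar DuBois--Reymond lemma, $\Gamma_y-P$ is constant on $(a,b)$, hence on $[a,b]$ by continuity. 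Evaluating at $t=a$ gives $\Gamma_y(t)=\Gamma_y(a)+\int_a^tF_y$. This is $C^1$ with derivative $F_y$, which is (b).

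\emph{(b)$\Rightarrow$(a).} This direction needs more care. I would use Lemma~\ref{lem:pairing} only if it were available earlier; since it is not, I would argue directly.
\begin{enumerate}[label=\textit{\roman*)}]
\item For fixed $\mu$, show that $h(t):=\gamma(t)(\mu(t))$ is differentiable with $h'(t)=f(t)(\mu(t))+\gamma(t)(\mu'(t))$. Write
\[
\frac{h(t+s)-h(t)}{s}=\gamma(t+s)\Bigl(\frac{\mu(t+s)-\mu(t)}{s}\Bigr)+\frac{\gamma(t+s)(\mu(t))-\gamma(t)(\mu(t))}{s}.
\]
\item The second term converges to $f(t)(\mu(t))$ by (b).
\item For the first term, Lemma~\ref{lem:BS} gives a seminorm $p$ with $|\gamma(t+s)(z)|\le p(z)$. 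The difference quotient of $\mu$ converges to $\mu'(t)$ in $E$, so the joint continuity in Lemma~\ref{lem:BS} yields convergence to $\gamma(t)(\mu'(t))$.
\item Since $h'$ is continuous, again by the joint continuity of Lemma~\ref{lem:BS} applied to $f$ and $\gamma$, we get $\int_a^bh'=h(b)-h(a)=0$, because $\mu$ vanishes near $a$ and $b$. This is exactly (a).
\end{enumerate}

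The main obstacle is step \textit{iii)}, the uniform control of $\gamma(t+s)$ applied to a moving argument. Lemma~\ref{lem:BS} (Banach--Steinhaus) is precisely what supplies this control. Note that in the (b)$\Rightarrow$(a) direction no approximation of $\mu$ by separated variations is actually needed, since the pairing is differentiated directly.
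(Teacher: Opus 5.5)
Your proof is correct and follows the paper's route: you prove (a)$\Rightarrow$(b) by testing with $\mu=\varphi\,y$ and reducing to the scalar lemma (which the paper proves explicitly rather than citing), and (b)$\Rightarrow$(a) by showing that $t\mapsto\gamma(t)(\mu(t))$ is $C^1$ with the expected derivative and then integrating. The only difference is in how the product rule is obtained. You derive it directly from difference quotients and the joint continuity of $(t,y)\mapsto\gamma(t)(y)$ given by Lemma~\ref{lem:BS}, whereas the paper invokes Lemma~\ref{lem:pairing}, which is stated later but proved independently, using the integral representation of $\gamma$ and Leibniz's rule with the equicontinuity of $f$. Your opening remark about approximating $\mu$ by separated terms is, as you note yourself at the end, unnecessary.
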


\begin{proof}
Let us prove first the scalar case ($E=\R$), namely, that if
$F,G\in C^{0}([a,b],\R)$ verify
$\int_a^b(F\mu+G\mu')\,dt=0$ for all $\mu\in C^{\infty}_{c}\bigl((a,b)\bigr)$,
then $G\in C^{1}$ and $G'=F$. Indeed, put $\Phi(t)=\int_a^tF$. Integrating
by parts (there are no boundary terms, as $\mu$ has compact support in the open
interval) we get $\int_a^b(G-\Phi)\mu'\,dt=0$ for all such $\mu$. Fix
$\chi\in C^{\infty}_{c}((a,b))$ with $\int\chi=1$. For an arbitrary
$\psi\in C^{\infty}_{c}((a,b))$, the function
$\mu(t):=\int_a^t\bigl[\psi(s)-\bigl(\smallint\psi\bigr)\chi(s)\bigr]ds$
is smooth, vanishes near $a$, and is constant equal to
$\smallint\psi-\smallint\psi=0$ near $b$, hence
$\mu\in C^{\infty}_{c}((a,b))$ and $\mu'=\psi-(\smallint\psi)\chi$.
Substituting,
\[
0=\int_a^b(G-\Phi)\psi\,dt-\Bigl(\int_a^b\psi\,dt\Bigr)\int_a^b(G-\Phi)\chi\,dt
 =\int_a^b\bigl[(G-\Phi)-\kappa\bigr]\psi\,dt
\]
(where $\kappa:=\int_a^b(G-\Phi)\chi\,dt$), for all $\psi$, so $G-\Phi\equiv\kappa$ by continuity, that is, $G'=F$.

\ref{dbr1}$\Rightarrow$\ref{dbr2}. Fix $y\in E$ and apply \ref{dbr1} to $\mu(t)=\varphi(t)\,y$
with $\varphi\in C^{\infty}_{c}((a,b))$ arbitrary. This yields
$\int[F_y\varphi+G_y\varphi']=0$ with $F_y(t)=f(t)(y)$,
$G_y(t)=\gamma(t)(y)$ continuous, and the scalar case gives \ref{dbr2}. Note
that only the very special test curves $\varphi\,y$ are needed here.

\ref{dbr2}$\Rightarrow$\ref{dbr1}. Given $\mu$ as in \ref{dbr1}, the function
$t\mapsto\gamma(t)(\mu(t))$ is $C^{1}$ with derivative
$f(t)(\mu(t))+\gamma(t)(\mu'(t))$ by Lemma~\ref{lem:pairing} below, so
integrating and using $\mu(a)=\mu(b)=0$ we get \ref{dbr1}. Notice that there is no
circularity, because Lemma~\ref{lem:pairing} takes hypothesis \ref{dbr2} as
given, and does not use the present proposition.
\end{proof}

\begin{lemma}\label{lem:pairing}
Let $f,\gamma\colon[a,b]\to E'$ be weak-$*$ continuous and verify \ref{dbr2} of
Proposition~\textup{\ref{prop:DBR}}. Then for every
$w\in C^{1}([a,b],E)$ the function $t\mapsto\gamma(t)(w(t))$ is $C^{1}$,
with
\[
\frac{d}{dt}\bigl[\gamma(t)(w(t))\bigr]=f(t)(w(t))+\gamma(t)(w'(t)).
\]
\end{lemma}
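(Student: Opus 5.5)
We will prove the lemma by a difference-quotient argument, using the Banach--Steinhaus equicontinuity of Lemma~\ref{lem:BS} to control the cross terms. Fix $t\in[a,b]$ and $h\neq0$ small with $t+h\in[a,b]$. We split the difference quotient as
\[
\frac{\gamma(t+h)(w(t+h))-\gamma(t)(w(t))}{h}
=\gamma(t+h)\Bigl(\frac{w(t+h)-w(t)}{h}\Bigr)+\frac{\gamma(t+h)(w(t))-\gamma(t)(w(t))}{h}.
\]
The second term tends to $f(t)(w(t))$ by hypothesis \ref{dbr2} applied with the fixed vector $y=w(t)$. For the first term, write $\frac{w(t+h)-w(t)}{h}=w'(t)+r_h$, where $r_h\to0$ in $E$ as $h\to0$ because $w$ is $C^1$. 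Then $\gamma(t+h)(w'(t))\to\gamma(t)(w'(t))$ by weak-$*$ continuity. Lemma~\ref{lem:BS}, applied to $\gamma$ on $[a,b]$, gives a continuous seminorm $p$ with $|\gamma(s)(r_h)|\le p(r_h)\to0$. This establishes differentiability, with the claimed derivative.

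It remains to show that the derivative $t\mapsto f(t)(w(t))+\gamma(t)(w'(t))$ is continuous. Both $f$ and $\gamma$ are weak-$*$ continuous on the compact interval, so Lemma~\ref{lem:BS} makes $(s,y)\mapsto f(s)(y)$ and $(s,y)\mapsto\gamma(s)(y)$ jointly continuous on $[a,b]\times E$. Composing with the continuous curves $s\mapsto(s,w(s))$ and $s\mapsto(s,w'(s))$ gives continuity.

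The only delicate point is the cross term $\gamma(t+h)(r_h)$, where both the functional and the vector vary simultaneously. Weak-$*$ continuity alone does not control this term; the uniform seminorm bound from Banach--Steinhaus is exactly what is needed, and it is the reason the barrelled (Fr\'echet) hypothesis enters. At the endpoints $a$ and $b$ the same argument works with one-sided limits.
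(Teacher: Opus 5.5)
Your proof is correct, but it is organized differently from the paper's. The paper does not form difference quotients of the pairing. It fixes $t_0$ and uses the integrated form of \ref{dbr2} to write $\gamma(t)(w(t))=\gamma(t_0)(w(t))+\int_{t_0}^{t}f(s)(w(t))\,ds$. The first term is differentiated as a continuous linear functional composed with a $C^1$ curve. The second is handled by the classical Leibniz rule for $\varphi(s,t)=f(s)(w(t))$, with Lemma~\ref{lem:BS} applied to $f$ to get joint continuity of $\varphi$ and of $\partial_t\varphi(s,t)=f(s)(w'(t))$. Finally $\int_{t_0}^{t}f(s)(w'(t))\,ds$ is converted back into $[\gamma(t)-\gamma(t_0)](w'(t))$.

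Your argument is the standard product-rule computation for a bilinear pairing. The equicontinuity bound for $\gamma$ (rather than for $f$) plays the role that an operator-norm bound would play in a Banach space. It is shorter, avoids the Leibniz rule, and proves slightly more. The differentiability at $t$ uses only that $\gamma$ is weak-$*$ continuous and that each $\gamma(\cdot)(y)$ is differentiable at $t$ with derivative $f(t)(y)$. The weak-$*$ continuity of $f$, and hence the integral representation, enters only in your continuity step for the derivative.

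The paper's route, in exchange, reduces everything to a classical scalar theorem on parameter-dependent integrals. It also uses the hypothesis in exactly the integrated form in which it is stated in \ref{dbr2}.
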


\begin{proof}
Fix $t_0$. For $t$ near $t_0$, \ref{dbr2} gives pointwise in $y$
\[
\gamma(t)(w(t))
=\gamma(t_0)(w(t))+\int_{t_0}^{t}f(s)(w(t))\,ds .
\]
The first term is $C^{1}$ in $t$ with derivative $\gamma(t_0)(w'(t))$,
being the composition of the continuous linear $\gamma(t_0)$ with a $C^{1}$
curve. For the second one, put $\varphi(s,t)=f(s)(w(t))$. By
Lemma~\ref{lem:BS} there is a continuous seminorm $p$ with
$|f(s)(z)|\le p(z)$ uniformly in $s$, hence
$|\varphi(s,t)-\varphi(s,t')|\le p(w(t)-w(t'))$ uniformly in $s$ and,
together with the continuity in $s$ for fixed $t$, this shows that $\varphi$
is jointly continuous. The same argument applied to
$\partial_t\varphi(s,t)=f(s)(w'(t))$ shows that $\partial_t\varphi$ exists and
is jointly continuous. The classical Leibniz rule then gives
\begin{align*}
\frac{d}{dt}\int_{t_0}^{t}\varphi(s,t)\,ds
=& \varphi(t,t)+\int_{t_0}^{t}f(s)(w'(t))\,ds\\
=& f(t)(w(t))+\Bigl[\gamma(t)-\gamma(t_0)\Bigr](w'(t)),
\end{align*}
where we have used \ref{dbr2} again in the last step. Adding the two contributions
proves the formula, and the continuity of the right hand side
(Lemma~\ref{lem:BS}) gives $C^{1}$.
\end{proof}

\subsection{The Euler--Lagrange equations}\label{ssec:EL}

\begin{theorem}[Euler--Lagrange equations]\label{thm:EL}
A curve $c\in C^{\infty}(I,\Nn)$ is critical for
$\mathcal{L}\in C^{\infty}(T\Nn,\R)$ if and only if for every $y\in E$ the
function $t\mapsto D_2L(c(t),c'(t))(y)$ is $C^{1}$ on $I$ and
\begin{equation}\label{eq:EL}
\frac{d}{dt}\Bigl[D_2L\bigl(c(t),c'(t)\bigr)(y)\Bigr]
=D_1L\bigl(c(t),c'(t)\bigr)(y),
\qquad t\in I,\ y\in E .
\end{equation}
\end{theorem}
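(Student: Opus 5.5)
The plan is to compute the first variation explicitly and then reduce both directions to Proposition~\ref{prop:DBR}. Let $c_\bullet$ be a variation with compact support of $c$, with generator $A$ supported in a compact $K\subset I$. Choose $[a,b]\subset I$ with $K\subset(a,b)$, and take $J=(a,b)$. The map $(s,t)\mapsto L(c_s(t),c_s'(t))$ is smooth on $\R\times I$. On the compact set $[0,\varepsilon]\times[a,b]$ its $s$-derivative is continuous, so differentiation under the integral sign is legitimate. By the chain rule for conveniently smooth maps applied to the smooth curve $s\mapsto(c_s(t),c_s'(t))$ we get
\[
\frac{d}{ds}\Big|_{0}\int_a^b L(c_s,c_s')\,dt=\int_a^b\Bigl[D_1L(c,c')(A(t))+D_2L(c,c')(A'(t))\Bigr]dt .
\]
Here we use that $\partial_s|_0 c_s'(t)=A'(t)$. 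This holds by symmetry of mixed partials of the smooth map $(s,t)\mapsto c_s(t)$, which is valid in Fr\'echet spaces along smooth maps of two real variables.

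Next, set $f(t):=D_1L(c(t),c'(t))$ and $\gamma(t):=D_2L(c(t),c'(t))$. These are elements of $E'$ by the bornological remark in \S\ref{ssec:frechet}. They are weak-$*$ continuous, in fact smooth in $t$ for each fixed $y$: for example, $t\mapsto D_2L(c(t),c'(t))(y)=\partial_r|_0L(c(t),c'(t)+ry)$ is the derivative of a smooth function of $(t,r)$. Hence Proposition~\ref{prop:DBR} applies on every $[a,b]\subset I$.

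For the implication ``critical $\Rightarrow$ \eqref{eq:EL}'', fix $[a,b]\subset I$ and $\mu\in C^\infty([a,b],E)$ with $\supp\mu\subset(a,b)$. Extend $\mu$ by zero to $A\in C^\infty(I,E)$ with compact support. Lemma~\ref{lem:realisation} produces a variation with generator $A$. Criticality together with the first-variation formula gives condition~\ref{dbr1} of Proposition~\ref{prop:DBR}, so condition~\ref{dbr2} holds on $[a,b]$. Since $[a,b]$ is arbitrary, we obtain the $C^1$ property and \eqref{eq:EL} on all of $I$.

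For the converse, given any variation with compact support, enclose its compact set $K$ in some $(a,b)$. Condition~\ref{dbr2} on $[a,b]$ follows from \eqref{eq:EL} by integration. The implication \ref{dbr2}$\Rightarrow$\ref{dbr1} of Proposition~\ref{prop:DBR}, which rests on Lemma~\ref{lem:pairing}, applied with $\mu=A|_{[a,b]}$, shows that the first variation vanishes. Hence $c$ is critical.

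The main obstacle is the first step: justifying the first-variation formula. This requires both the interchange of $d/ds$ with $\int dt$ and the identity $\partial_s c_s'=(\partial_s c_s)'$. I would handle these using only smoothness of real functions of two variables and of $E$-valued maps of two variables, which follows from the convenient calculus \cite[Ch.~I]{KM97}. The remaining steps are direct invocations of the earlier results.
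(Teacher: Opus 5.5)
Your proposal is correct and follows essentially the same route as the paper. Both arguments derive the first-variation formula from the chain rule and the symmetry of mixed partials. Both then combine Lemma~\ref{lem:realisation} with Proposition~\ref{prop:DBR} for the direct implication and use Lemma~\ref{lem:pairing} for the converse. The only cosmetic difference is in the converse: you invoke the implication (b)$\Rightarrow$(a) of Proposition~\ref{prop:DBR}, whereas the paper applies Lemma~\ref{lem:pairing} directly, which is exactly what that implication amounts to.
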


\begin{proof}
Let $c_\bullet$ be a variation with compact support $K$, generator $A$,
and let $J\supset K$ be relatively compact open with $\bar J=[a,b]\subset I$.
As we have noted after Definition~\ref{def:critical},
$(s,t)\mapsto L(c_s(t),c'_s(t))$ is a smooth function on $\R\times I$, so
by the classical rule of differentiation under the integral sign and the
chain rule along the smooth curve $s\mapsto(c_s(t),c'_s(t))$
\cite[Ch.~I]{KM97},
\begin{equation}\label{eq:firstvariation}
\frac{d}{ds}\Big|_{0}\int_{J}L(c_s,c'_s)\,dt
=\int_a^b\Bigl[D_1L(c,c')\bigl(A(t)\bigr)+D_2L(c,c')\bigl(A'(t)\bigr)\Bigr]dt,
\end{equation}
where we have used
$\partial_s|_0c'_s=\partial_t\partial_s|_0c_s=A'$ (this equality of mixed
partials for smooth real-parameter families with values in a Fr\'echet
space is valid because it holds after composing with any $\ell\in E'$, and $E'$
separates points).

Now define $f(t):=D_1L(c(t),c'(t))$ and $\gamma(t):=D_2L(c(t),c'(t))$, which are
elements of $E'$ by \S\ref{ssec:frechet} and both weak-$*$ continuous (indeed
smooth in $t$ for each fixed $y$, again by the chain rule). If $c$ is
critical, then by Lemma~\ref{lem:realisation} the right hand side of
\eqref{eq:firstvariation} vanishes for every
$\mu=A\in C^{\infty}([a,b],E)$ with support in $(a,b)$, and
Proposition~\ref{prop:DBR} gives \eqref{eq:EL} on $J$; covering $I$ by
such $J$ we get \eqref{eq:EL} on $I$. Conversely, if \eqref{eq:EL} holds,
then Lemma~\ref{lem:pairing} shows that the integrand in
\eqref{eq:firstvariation} equals $\frac{d}{dt}[\gamma(t)(A(t))]$, whose
integral vanishes since $A(a)=A(b)=0$, hence $c$ is critical.
\end{proof}

\begin{remark}\label{rem:ELremarks}
(a) Equation \eqref{eq:EL} is the statement of \cite[Thm.~5.3]{Val09}, but now
with explicit continuity hypotheses and with some analytic gaps closed. (b)
Criticality (Definition~\ref{def:critical}) is coordinate independent in an
obvious way, as it has been defined without any reference to a chart. The
chart expression \eqref{eq:EL} transforms in a compatible manner. Indeed, the
fibre derivative $F\mathcal{L}(v)\in E'$,
$F\mathcal{L}(v)(w):=\frac{d}{d\varepsilon}\big|_{0}\mathcal{L}(v+\varepsilon w)
=D_2L(\pi(v),v)(w)$, transforms as a covector, but the plain time derivative
$\frac{d}{dt}F\mathcal{L}(c')$ is not an intrinsic covector field,
because under a nonlinear change of chart it gets terms involving the
derivative of the transition map. An intrinsic formulation of the equation
therefore requires a connection, and this will be done in \S\ref{ssec:mechanical} 
by analogy with the case of particle mechanics, where \eqref{eq:EL} will become the
coordinate free \eqref{eq:mechEL}. (c) On a general Fr\'echet manifold with
local addition the same proof can be applied in each chart along the curve.
\end{remark}

\subsection{Lagrangians of mechanical type}\label{ssec:mechanical}
Let us now specialize to the class of Lagrangians which is relevant for
geometrodynamics.

\begin{definition}\label{def:mechanical}
A \emph{weak pseudo-Riemannian structure with Levi-Civita connection} on
$\Nn$ is a pair $(G,\Gamma)$ where
$G\colon\Nn\to L^{2}_{\mathrm{sym}}(E;\R)$ is smooth with each $G_g$
weakly non degenerate, and $\Gamma\colon\Nn\to L^{2}_{\mathrm{sym}}(E;E)$
is smooth and verifies the metricity identity \eqref{eq:metricity}. A
function $V\in C^{\infty}(\Nn)$ (the potential) \emph{admits a $G-$gradient} if there is a
smooth $\vgrad V\colon\Nn\to E$ with $dV(g)(h)=G_g(\vgrad V(g),h)$. The
associated \emph{mechanical Lagrangian} is
\[
\mathcal{L}(g,k)=\tfrac12\,G_g(k,k)-V(g).
\]
\end{definition}

\begin{lemma}\label{lem:LCunique}
For a given weakly non degenerate $G$ there is at most one symmetric
$\Gamma$ satisfying \eqref{eq:metricity}.
\end{lemma}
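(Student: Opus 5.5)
The plan is to run the classical Koszul uniqueness argument in the global chart, with weak non degeneracy doing the work that finite dimensionality usually does. Suppose $\Gamma$ and $\widetilde\Gamma$ are both symmetric and both satisfy \eqref{eq:metricity}. Subtracting the two identities removes the term $DG(g)(l)(h,k)$, since it does not involve the connection. For fixed $g\in\Nn$ the difference $B:=\Gamma_g-\widetilde\Gamma_g\in L^{2}_{\mathrm{sym}}(E;E)$ then satisfies
\[
G_g\bigl(B(l,h),k\bigr)+G_g\bigl(h,B(l,k)\bigr)=0,\qquad l,h,k\in E .
\]
Put $T(l,h,k):=G_g(B(l,h),k)$. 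This is a trilinear form that is symmetric in its first two slots, because $B$ is symmetric, and antisymmetric in its last two slots, by the display above together with the symmetry of $G_g$.

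The key step is the standard fact that a trilinear form which is symmetric in slots $(1,2)$ and antisymmetric in slots $(2,3)$ must vanish. Alternating the two properties moves the form around the six permutations of its arguments and introduces a sign at each step:
\[
T(l,h,k)=T(h,l,k)=-T(h,k,l)=-T(k,h,l)=T(k,l,h)=T(l,k,h)=-T(l,h,k).
\]
Hence $T=0$. This step is purely algebraic and involves no topology.

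It follows that $G_g(B(l,h),k)=0$ for every $k\in E$. Weak non degeneracy of $G_g$ then gives $B(l,h)=0$ for all $l,h$, so $\Gamma_g=\widetilde\Gamma_g$ for every $g\in\Nn$.

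The main obstacle is really only bookkeeping. The identity \eqref{eq:metricity} has to be used pointwise in $g$, and injectivity of the map $E\to E'$ induced by $G_g$ is enough; surjectivity (strong non degeneracy) is never needed, which is why the argument survives in the weak Fréchet setting. Note that uniqueness also holds without any assumption that a connection exists.
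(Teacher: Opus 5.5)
Your proof is correct and matches the paper's argument: the difference $B=\Gamma-\widetilde\Gamma$ yields $T(l,h,k)=G_g(B(l,h),k)$, which is symmetric in the first two slots and antisymmetric in the last two, so a six-step chain forces $T=0$ and weak non degeneracy gives $B=0$. The only cosmetic difference is that your chain begins with the symmetry step where the paper begins with the antisymmetry step, and this changes nothing.
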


\begin{proof}
If $\Gamma,\widetilde\Gamma$ both satisfy \eqref{eq:metricity}, then $B:=\Gamma-\widetilde\Gamma$ is
symmetric and $T(l,h,k):=G_g(B(l,h),k)$ verifies $T(l,h,k)=-T(l,k,h)$
(from \eqref{eq:metricity}) and $T(l,h,k)=T(h,l,k)$ (by the symmetry of $B$).
The braid identity
$T(l,h,k)=-T(l,k,h)=-T(k,l,h)=T(k,h,l)=T(h,k,l)=-T(h,l,k)=-T(l,h,k)$
forces $T=0$, hence $B=0$ by weak non degeneracy.
\end{proof}

Let us recall the covariant derivative along curves \eqref{eq:covder} and its
obvious extension along smooth maps $a\colon\R^2\to\Nn$, given by
$\nabla_{\partial_s}(\partial_t a)=\partial_s\partial_t a
-\Gamma_a(\partial_s a,\partial_t a)$, and so on. There are two standard identities
that follow directly from the symmetry of $\Gamma$ and \eqref{eq:metricity},
\begin{align}
\nabla_{\partial_s}\partial_t a&=\nabla_{\partial_t}\partial_s a,
\label{eq:symmetrylemma}\\
\partial_t\,G_{c}\bigl(h(t),k(t)\bigr)
&=G_c\bigl(\nabla_{\partial_t}h,k\bigr)+G_c\bigl(h,\nabla_{\partial_t}k\bigr),
\label{eq:metricityalongcurves}
\end{align}
the latter for vector fields $h,k$ along a curve $c$ (expand the
left hand side by the chain rule and use \eqref{eq:metricity} with
$l=c'$).

\begin{theorem}
\label{thm:mechEL}
Let $(G,\Gamma)$ and $V$ be as in Definition~\textup{\ref{def:mechanical}}
and $\mathcal{L}=\tfrac12G(v,v)-V\circ\pi$. Then for every variation with
compact support of $c\in C^{\infty}(I,\Nn)$, with generator $A$,
\begin{equation}\label{eq:firstvarmech}
\frac{d}{ds}\Big|_{0}\int_JL(c_s,c'_s)\,dt
=-\int_J G_{c(t)}\Bigl(\nabla_{\partial_t}c'+\vgrad V(c),\,A(t)\Bigr)dt ,
\end{equation}
and $c$ is critical for $\mathcal{L}$ if and only if
\begin{equation}\label{eq:mechEL}
\nabla_{\partial_t}c'=-\vgrad V(c),
\qquad\text{that is,}\qquad
c''=\Gamma_{c}(c',c')-\vgrad V(c).
\end{equation}
\end{theorem}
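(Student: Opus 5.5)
The plan is to compute the first variation directly, using the global-chart identities \eqref{eq:symmetrylemma} and \eqref{eq:metricityalongcurves}, and then to reduce criticality to a pointwise statement by weak non degeneracy.

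\emph{Step 1: differentiate the kinetic and potential terms.} Let $c_\bullet$ be a variation with compact support $K\subset J$ and generator $A$. Differentiating $s\mapsto\tfrac12G_{c_s}(c_s',c_s')$ at $s=0$, the analogue of \eqref{eq:metricityalongcurves} with $\partial_s$ in place of $\partial_t$ (same proof, using \eqref{eq:metricity} with $l=\partial_sc_s$) gives
\[
G_c\bigl(\nabla_{\partial_s}\partial_t c_s|_{0},c'\bigr).
\]
By \eqref{eq:symmetrylemma} this equals $G_c(\nabla_{\partial_t}A,c')$. For the potential term, the chain rule and the definition of the gradient give
\[
\partial_s|_0V(c_s(t))=dV(c)(A)=G_c(\vgrad V(c),A).
\]
Differentiation under the integral is justified exactly as in the proof of Theorem~\ref{thm:EL}, since the integrand is a smooth function of $(s,t)$.

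\emph{Step 2: move $\nabla_{\partial_t}$ off $A$.} Apply \eqref{eq:metricityalongcurves} to the pair $A,c'$:
\[
G_c(\nabla_{\partial_t}A,c')=\partial_tG_c(A,c')-G_c(A,\nabla_{\partial_t}c').
\]
The total derivative integrates to zero over $J$, because $A$ vanishes near $\partial J$. This yields \eqref{eq:firstvarmech}, using the symmetry of $G_c$.

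\emph{Step 3: criticality.} If \eqref{eq:mechEL} holds, the right-hand side of \eqref{eq:firstvarmech} vanishes, so $c$ is critical. Conversely, suppose $c$ is critical and set $w(t):=\nabla_{\partial_t}c'+\vgrad V(c)$, a smooth curve in $E$. By Lemma~\ref{lem:realisation} every compactly supported $A$ is a generator, so we may take $A(t)=\varphi(t)y$ with $y\in E$ fixed and $\varphi\in C^\infty_c(I)$ arbitrary. Then $\int\varphi(t)\,G_{c(t)}(w(t),y)\,dt=0$ for all $\varphi$. The function $t\mapsto G_{c(t)}(w(t),y)$ is continuous (indeed smooth, as a composition of smooth maps), so it vanishes identically. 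Hence for each fixed $t$ we have $G_{c(t)}(w(t),y)=0$ for every $y$, and weak non degeneracy gives $w(t)=0$. Expanding $\nabla_{\partial_t}c'$ by \eqref{eq:covder} gives the second form of \eqref{eq:mechEL}.

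The main obstacle is the justification in Steps 1--2 that the chart-level manipulations are legitimate for weak metrics. Concretely:
\begin{itemize}
\item the differentiation of $G_{c_s}(c_s',c_s')$ in $s$ must be computed by the chain rule for the smooth map $(g,h,k)\mapsto G_g(h,k)$ on $\Nn\times E\times E$;
\item the terms must then be rearranged using \eqref{eq:metricity};
\item the mixed partials $\partial_s\partial_tc_s=\partial_t\partial_sc_s$ must commute, which is argued as in the proof of Theorem~\ref{thm:EL}.
\end{itemize}
Since everything happens in a single global chart, with all maps conveniently smooth, I expect this to go through as in finite dimensions. No Riesz-type representation of $d V$ or of $D_2L$ is needed beyond the assumed existence of $\vgrad V$ and of $\Gamma$.
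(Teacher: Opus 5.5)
Your proposal is correct and follows the paper's own proof. In both, the kinetic term is differentiated using metricity in the $s$-direction together with \eqref{eq:symmetrylemma}, the term $G_c(\nabla_{\partial_t}A,c')$ is integrated by parts via \eqref{eq:metricityalongcurves}, and the converse uses test generators $A=\varphi(t)\,y$ from Lemma~\ref{lem:realisation}, continuity and weak non degeneracy; the paper handles the technical points you flag (the chain rule for $G$, interchanging $\frac{d}{ds}$ and $\int_J$) just as you anticipate, through the global chart and an appeal to the proof of Theorem~\ref{thm:EL}.
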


\begin{proof}
Write $w=\partial_t c_s$. Using \eqref{eq:metricityalongcurves} in the
$s-$direction, and then \eqref{eq:symmetrylemma},
\[
\partial_s\bigl[\tfrac12G_{c_s}(w,w)\bigr]
=G_{c_s}\bigl(\nabla_{\partial_s}w,w\bigr)
=G_{c_s}\bigl(\nabla_{\partial_t}\partial_sc_s,\,w\bigr),
\]
while $\partial_sV(c_s)=dV(\partial_sc_s)=G(\vgrad V(c_s),\partial_sc_s)$.
At $s=0$, with $A=\partial_s|_0c_s$, and using
\eqref{eq:metricityalongcurves} in the $t-$direction,
\begin{align*}
\partial_s\big|_0 L(c_s,c'_s)
=& G_c(\nabla_{\partial_t}A,c')-G_c(\vgrad V(c),A) \\
=& \partial_t\bigl[G_c(A,c')\bigr]
-G_c\bigl(A,\nabla_{\partial_t}c'+\vgrad V(c)\bigr).
\end{align*}
Integrating over $J$ and discarding the exact term (as $A$ vanishes near
$\partial J$) gives \eqref{eq:firstvarmech}, the interchange of
$\frac{d}{ds}\big|_0$ and $\int_J$ is justified as in the proof of
Theorem~\ref{thm:EL}.

If \eqref{eq:mechEL} holds, then \eqref{eq:firstvarmech} vanishes for all $A$
and $c$ is critical. Conversely, suppose that $c$ is critical and put
$w(t):=\nabla_{\partial_t}c'+\vgrad V(c)\in C^{\infty}(I,E)$. For each
$y\in E$ and each $\varphi\in C^{\infty}_c(I)$, the section
$A(t)=\varphi(t)\,y$ has compact support, so by
Lemma~\ref{lem:realisation} and \eqref{eq:firstvarmech},
$\int_I\varphi(t)\,G_{c(t)}(w(t),y)\,dt=0$. As $\varphi$ is arbitrary,
$G_{c(t)}(w(t),y)=0$ for all $t$ by continuity and, as $y\in E$ is
arbitrary too, weak non degeneracy gives $w(t)=0$ for every $t\in I$.
\end{proof}

\begin{remark}
For $(\Nn,G,\Gamma)=(\Mm,G^{c},\Gamma^{c})$, Theorem~\ref{thm:mechEL} with
$V=0$ recovers the geodesic equation $g''=\Gamma^{c}_g(g',g')$ of
\cite{GMi91}, that is, \eqref{eq:spray}. For $E=\R^{m}$, with $G$ a
(pseudo-)Riemannian metric and $V$ a potential, \eqref{eq:mechEL} is
Newton's equation, and Theorem~\ref{thm:EL} reduces to the classical
Euler--Lagrange equations. 
\end{remark}

\subsection{The energy function}\label{ssec:energy}

\begin{definition}\label{def:energy}
The \emph{energy} of $\mathcal{L}\in C^{\infty}(T\Nn,\R)$ is
$\Ee_{\mathcal{L}}\in C^{\infty}(T\Nn,\R)$, given by
\[
\Ee_{\mathcal{L}}(v):=F\mathcal{L}(v)(v)-\mathcal{L}(v)
=D_2L(\pi(v),v)(v)-L(\pi(v),v).
\]
For a mechanical Lagrangian, $F\mathcal{L}(g,k)(h)=G_g(k,h)$ and
the energy becomes $\Ee_{\mathcal{L}}(g,k)=\tfrac12G_g(k,k)+V(g)$.
\end{definition}

The following result expresses the conservation of energy along critical curves.

\begin{theorem}\label{thm:energy}
Along every critical curve $c$ of $\mathcal{L}$, the function given by
$t\mapsto\Ee_{\mathcal{L}}(c(t),c'(t))$ is constant.
\end{theorem}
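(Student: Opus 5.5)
We write $\gamma(t):=D_2L(c(t),c'(t))$ and $f(t):=D_1L(c(t),c'(t))$, so that
\[
\Ee_{\mathcal{L}}(c(t),c'(t))=\gamma(t)(c'(t))-L(c(t),c'(t)).
\]
The plan is to differentiate the two terms separately and show that the derivatives cancel.

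First, by Theorem~\ref{thm:EL}, criticality of $c$ means that $\gamma$ is weak-$*$ $C^1$ with $\frac{d}{dt}[\gamma(t)(y)]=f(t)(y)$ for every $y\in E$. Both $f$ and $\gamma$ are weak-$*$ continuous, as noted in the proof of that theorem. This is hypothesis \ref{dbr2} of Proposition~\ref{prop:DBR} on every compact subinterval $[a,b]\subset I$. We then apply Lemma~\ref{lem:pairing} with $w=c'$, which is $C^1$ (indeed smooth) with values in $E$. It gives that $t\mapsto\gamma(t)(c'(t))$ is $C^1$ and
\[
\frac{d}{dt}\bigl[\gamma(t)(c'(t))\bigr]=f(t)(c'(t))+\gamma(t)(c''(t)).
\]

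Second, $t\mapsto(c(t),c'(t))$ is a smooth curve in $\Nn\times E$. The chain rule recalled in \S\ref{ssec:frechet} therefore gives
\[
\frac{d}{dt}L(c(t),c'(t))=D_1L(c,c')(c')+D_2L(c,c')(c'').
\]
The splitting of $dL$ into partial derivatives is just linearity of the Gâteaux differential on $E\times E$. Subtracting, the two derivatives agree, so the derivative of the energy vanishes on each $[a,b]$. Since $I$ is an interval, the energy is constant on $I$.

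The main obstacle is the derivative of the pairing $\gamma(t)(c'(t))$. Here both the functional and the vector depend on $t$, and $\gamma$ is only known to be weak-$*$ differentiable, which is not enough to apply a naive product rule. This is exactly what Lemma~\ref{lem:pairing}, via Banach--Steinhaus, handles. Once it is invoked, the remaining steps are bookkeeping.

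For mechanical Lagrangians one could alternatively use \eqref{eq:metricityalongcurves} together with \eqref{eq:mechEL}:
\[
\partial_t\bigl(\tfrac12G(c',c')\bigr)=G(\nabla_{\partial_t}c',c')=-G(\vgrad V,c')=-\partial_tV(c).
\]
This gives a direct check, but the general argument above is what we would present.
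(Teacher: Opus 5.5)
Your proof is correct and matches the paper's argument: Theorem~\ref{thm:EL} gives hypothesis \ref{dbr2} of Proposition~\ref{prop:DBR} for $(f,\gamma)$, Lemma~\ref{lem:pairing} with $w=c'$ differentiates the pairing $\gamma(t)(c'(t))$, and the chain rule gives the same derivative for $L(c,c')$, so the energy has vanishing derivative. Working on compact subintervals $[a,b]$ and the cross-check for mechanical Lagrangians are harmless additions.
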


\begin{proof}
Keep $f(t)=D_1L(c,c')$, $\gamma(t)=D_2L(c,c')$ as in
Theorem~\ref{thm:EL}, which asserts precisely condition (ii) of
Proposition~\ref{prop:DBR} for the pair $(f,\gamma)$. By
Lemma~\ref{lem:pairing} with $w=c'$ (which is $C^{1}$, indeed smooth),
$t\mapsto\gamma(t)(c'(t))$ is $C^{1}$ with derivative
$f(t)(c')+\gamma(t)(c'')$. On the other hand, by the chain rule
$\frac{d}{dt}L(c,c')=f(t)(c')+\gamma(t)(c'')$ as well. Subtracting,
$\frac{d}{dt}\Ee_{\mathcal{L}}(c,c')=0$.
\end{proof}

\begin{remark}
For time dependent Lagrangians $\mathcal{L}\in C^{\infty}(\R\times T\Nn)$
the same proofs give \eqref{eq:EL}, together with
$\frac{d}{dt}\Ee_{\mathcal{L}}(t,c,c')
=-\partial_t L(t,c(t),c'(t))$, although we will not need this.
\end{remark}

\section{Symmetry and conservation laws}
\label{sec:noether}

\subsection{The infinitesimal Noether theorem}\label{ssec:noetherthm}
On an arbitrary Fr\'echet manifold a smooth vector field need not admit a flow (Picard's
theorem fails, see \cite{Ham82,KM97}), so a Noether theorem whose hypothesis
is the invariance under a one-parameter group is unnecessarily
restrictive and, in general, empty. The right hypothesis is the infinitesimal
one. We will also allow invariance up to a boundary term, which gives the energy
theorem and the Galilean type symmetries as corollaries.

\begin{definition}\label{def:infinvariance}
Let $X\in C^{\infty}(\Nn,E)$ (a vector field on $\Nn$ in the global
chart), and let $F\in C^{\infty}(\Nn)$. We say that $\mathcal{L}$ is
\emph{$X-$invariant up to $F$} if, for all $(g,k)\in\Nn\times E$, it holds 
\begin{equation}\label{eq:quasiinv}
D_1L(g,k)\bigl(X(g)\bigr)+D_2L(g,k)\bigl(DX(g)(k)\bigr)=dF(g)(k)\,.
\end{equation}
When $F=0$ we say that $\mathcal{L}$ is \emph{(infinitesimally)
$X-$invariant}. The \emph{Noether function} of $X$ is the $J_X\in C^{\infty}(T\Nn)$
given by
\[
J_X(v):=F\mathcal{L}(v)\bigl(X_{\pi(v)}\bigr)=D_2L(\pi(v),v)\bigl(X(\pi(v))\bigr)\,.
\]
\end{definition}

Note that the left hand side of \eqref{eq:quasiinv} is the derivative of
$\mathcal{L}$ along the complete (tangent) lift of $X$. Of course, if $X$ does
have a flow $\psi_s$, then \eqref{eq:quasiinv} with $F=0$ is implied by
$\mathcal{L}\circ T\psi_s=\mathcal{L}$.

\begin{theorem}[Noether]\label{thm:noether}
If $\mathcal{L}$ is $X-$invariant up to $F$, then along every critical
curve $c$ of $\mathcal{L}$ the function
\[
t\longmapsto J_X\bigl(c'(t)\bigr)-F\bigl(c(t)\bigr)
\]
is constant. In particular, if $\mathcal{L}$ is $X-$invariant, then $J_X$ is a
constant of the motion.
\end{theorem}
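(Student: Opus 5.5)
The plan is to differentiate $t\mapsto J_X(c'(t))-F(c(t))$ directly, using Lemma~\ref{lem:pairing} to handle the pairing of a weak-$*$ $C^1$ curve of functionals with a smooth curve of vectors. As in the proof of Theorem~\ref{thm:EL}, set $f(t):=D_1L(c(t),c'(t))$ and $\gamma(t):=D_2L(c(t),c'(t))$. Since $c$ is critical, Theorem~\ref{thm:EL} says exactly that the pair $(f,\gamma)$ satisfies condition \ref{dbr2} of Proposition~\ref{prop:DBR} on every compact subinterval $[a,b]\subset I$. Both curves are weak-$*$ continuous, indeed smooth in $t$ for each fixed $y$.

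Along $c$ we have $J_X(c'(t))=\gamma(t)\bigl(w(t)\bigr)$ with $w(t):=X(c(t))$. The curve $w$ is smooth in $E$, because $X$ is smooth and smooth maps send smooth curves to smooth curves. By the chain rule, $w'(t)=DX(c(t))(c'(t))$. Applying Lemma~\ref{lem:pairing} on each $[a,b]$, the function $t\mapsto J_X(c'(t))$ is $C^1$, with
\[
\frac{d}{dt}J_X(c'(t))=D_1L(c,c')\bigl(X(c)\bigr)+D_2L(c,c')\bigl(DX(c)(c')\bigr).
\]
Evaluating the invariance identity \eqref{eq:quasiinv} at $(g,k)=(c(t),c'(t))$ shows that the right-hand side equals $dF(c(t))(c'(t))$. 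By the chain rule, this is $\frac{d}{dt}F(c(t))$. The derivative of $J_X(c')-F(c)$ therefore vanishes on each interval $[a,b]$. Since $I$ is an interval covered by such compact pieces, the function is constant on $I$. The case $F=0$ gives the second claim.

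The only delicate step is the differentiation of $\gamma(t)(X(c(t)))$. In Fréchet spaces one cannot simply apply the product rule, because $\gamma$ is only known to be weak-$*$ $C^1$, not differentiable in any strong dual topology. This is exactly what Lemma~\ref{lem:pairing}, through the Banach--Steinhaus equicontinuity of Lemma~\ref{lem:BS}, supplies. Everything else is the chain rule and the definitions.
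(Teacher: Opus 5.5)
Your proposal is correct and follows essentially the same route as the paper. The paper likewise applies Lemma~\ref{lem:pairing} to $\gamma(t)\bigl(X(c(t))\bigr)$ and uses the invariance identity \eqref{eq:quasiinv} to identify the derivative with $\frac{d}{dt}F(c(t))$. Your explicit restriction to compact subintervals $[a,b]\subset I$ is a harmless extra bit of care that the paper leaves implicit.
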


\begin{proof}
With $f,\gamma$ as before and $w(t):=X(c(t))$ (a smooth curve in $E$ with
$w'(t)=DX(c(t))(c'(t))$ by the chain rule), Lemma~\ref{lem:pairing} gives
\begin{align*}
\frac{d}{dt}\,J_X(c'(t))
=& \frac{d}{dt}\bigl[\gamma(t)(w(t))\bigr]
=f(t)\bigl(X(c)\bigr)+\gamma(t)\bigl(DX(c)(c')\bigr)\\
=& dF(c)(c')
=\frac{d}{dt}F(c(t)),
\end{align*}
where the third equality is \eqref{eq:quasiinv} evaluated at
$(g,k)=(c(t),c'(t))$.
\end{proof}

\begin{remark}\label{rem:noetherremarks}
(a) $J_X(v)=F\mathcal{L}(v)(X_{\pi(v)})$ is chart independent, as a fibre
derivative (Remark~\ref{rem:ELremarks}(b)). (b) For a mechanical
Lagrangian, $J_X(g,k)=G_g(k,X(g))$. (c) Theorem~\ref{thm:energy} is the
degenerate case of the same computation, with the ``symmetry''
acting on time; we have kept it as a separate result for the sake of clarity.
\end{remark}

\subsection{The momentum map of the diffeomorphism group}
\label{ssec:momentum}
Consider the right action of $\Diff(M)$ on $\Mm$ by pullback,
$g\cdot\varphi=\varphi^{*}g$. The fundamental vector field of
$X\in\mathfrak{X}(M)$ is the map $\zeta_X\colon\Mm\to \Sym(M)$
given by
\[
\zeta_X(g)=\frac{d}{ds}\Big|_{0}\varphi_s^{*}g=\LD{X}g
=2\,\delta^{*}_g(X^{\flat}),
\]
where $\varphi_s$ is the flow of $X$ (which exists, $M$ being compact).
Notice that $\zeta_X$ is a smooth vector field on $\Mm$. In fact, in the global
chart the map $g\mapsto\LD{X}g$ is even linear in $g$, with
$D\zeta_X(g)(h)=\LD{X}h$.

\begin{proposition}\label{prop:momentum}
Let $\mathcal{L}(g,k)=\tfrac12G^{c}_g(k,k)-V(g)$ be a mechanical
Lagrangian on $(\Mm,G^{c})$ whose potential $V$ is a Riemannian
functional. Then:
\begin{enumerate}[label=\textit{\alph*)}]
\item\label{noe1} $\mathcal{L}$ is $\zeta_X$-invariant, in the sense of
Definition~\textup{\ref{def:infinvariance}}, for every
$X\in\mathfrak{X}(M)$;
\item\label{noe2} the associated Noether functions assemble into the map
$\mathbf{J}\colon T\Mm\to\mathfrak{X}(M)'$ given by
\[
\langle\mathbf{J}(g,k),X\rangle:=J_{\zeta_X}(g,k)
=G^{c}_g\bigl(k,\LD{X}g\bigr)
=2\int_M \ip{X^{\flat}}{\delta^{c}_gk}{g}\,\dv_g,
\]
where $\mathfrak{X}(M)'$ denotes the continuous dual of $\mathfrak{X}(M)$
(realized, for each $(g,k)$, by the smooth one-form density
$2\,(\delta^{c}_gk)\otimes\dv_g$ paired with vector fields), and
$\langle\mathbf{J}(g_t,g'_t),X\rangle$ is constant in $t$ along every
critical curve, for every $X$;
\item\label{noe3} for a fixed tangent vector $(g,k)$, we have
$\mathbf{J}(g,k)=0$ if and only if $\delta^{c}_gk=0$, if and only if $k$
is $G^{c}_g$-orthogonal to the image of the infinitesimal $\Diff(M)-$action
at $g$.
\end{enumerate}
\end{proposition}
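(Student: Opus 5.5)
The plan is to verify the infinitesimal invariance \eqref{eq:quasiinv} with $F=0$ by reducing it to the diffeomorphism invariance of $G^{c}$ and of $V$. Then \ref{noe2} follows from Theorem~\ref{thm:noether} and Lemma~\ref{lem:deltac}, and \ref{noe3} from weak non degeneracy arguments of the type used in Lemma~\ref{lem:cidentities}.

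For \ref{noe1}, fix $X$ with flow $\varphi_s$ and $(g,k)\in\Mm\times\Sym(M)$. Since $L(g,k)=\tfrac12G^{c}_g(k,k)-V(g)$ and $D\zeta_X(g)(k)=\LD{X}k$, the left hand side of \eqref{eq:quasiinv} equals
\[
\tfrac12 DG^{c}(g)(\LD{X}g)(k,k)+G^{c}_g(k,\LD{X}k)-dV(g)(\LD{X}g).
\]
The last term vanishes: differentiate $V(\varphi_s^{*}g)=V(g)$ at $s=0$, using the chain rule along the smooth curve $s\mapsto\varphi_s^{*}g$, whose derivative is $\LD{X}g$.

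For the first two terms, observe that the integrand $\ip{h}{k}{g}^{c}\dv_g$ is natural. Pointwise $\ip{\varphi^{*}k}{\varphi^{*}k}{\varphi^{*}g}^{c}=\varphi^{*}\bigl(\ip{k}{k}{g}^{c}\bigr)$, because traces and products transform tensorially, and $\dv_{\varphi^{*}g}=\varphi^{*}\dv_g$. Hence $G^{c}_{\varphi^{*}g}(\varphi^{*}k,\varphi^{*}k)=G^{c}_g(k,k)$. Differentiating at $s=0$ along the smooth curve $s\mapsto(\varphi_s^{*}g,\varphi_s^{*}k)$, whose velocity is $(\LD{X}g,\LD{X}k)$, gives exactly the vanishing of the first two terms.

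The one point needing care is smoothness of $s\mapsto\varphi_s^{*}h$ in the $C^\infty$ topology. This holds because $(s,p)\mapsto\varphi_s(p)$ is jointly smooth on $\R\times M$. Alternatively, one can bypass flows: the $s$-derivative is the integral of a pointwise Lie derivative of a density, $\int_M\LD{X}\bigl(\tfrac12\ip{k}{k}{g}^{c}\dv_g\bigr)=0$ by Stokes. This computation is the main (mild) obstacle, and I would present it this way to stay infinitesimal.

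For \ref{noe2}, Remark~\ref{rem:noetherremarks}(b) gives $J_{\zeta_X}(g,k)=G^{c}_g(k,\LD{X}g)$. Lemma~\ref{lem:deltac} rewrites this as $2\int_M\ip{X^{\flat}}{\delta^{c}_gk}{g}\dv_g$, and Theorem~\ref{thm:noether} gives constancy along critical curves. Continuity of $X\mapsto\langle\mathbf J(g,k),X\rangle$ on the Fréchet space $\mathfrak X(M)$ is clear: it is integration of the smooth one-form density $2(\delta^{c}_gk)\otimes\dv_g$ against $X$, bounded by the $C^0$ seminorm of $X$.

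For \ref{noe3}, if $\delta^{c}_gk=0$ then clearly $\mathbf J=0$. Conversely, if $\mathbf J(g,k)=0$, choose $X=(\delta^{c}_gk)^{\sharp}$. This gives $\int_M|\delta^{c}_gk|_g^2\dv_g=0$, hence $\delta^{c}_gk=0$ by continuity. The orthogonality statement is literally $G^{c}_g(k,\zeta_X(g))=0$ for all $X$, and the image of the infinitesimal action at $g$ is $\{\LD{X}g\}$.
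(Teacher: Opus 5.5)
Your proof is correct and is essentially the paper's. For a) you differentiate the pullback invariance of $G^{c}$ and of $V$ along the flow of $X$; for b) you combine Remark~\ref{rem:noetherremarks}(b), Lemma~\ref{lem:deltac} and Theorem~\ref{thm:noether}; and for c) you test with $X=(\delta^{c}_gk)^{\sharp}$, all exactly as the paper does. Your side remarks usefully fill in points the paper leaves implicit: smoothness of $s\mapsto\varphi_s^{*}h$ via joint smoothness of the flow, and continuity of $\mathbf{J}(g,k)$ via the $C^{0}$ bound. The Stokes variant is only cosmetic, since $dV(g)(\LD{X}g)=0$ still requires the flow, and identifying the first-variation integrand with $\LD{X}$ of the density is itself a naturality statement.
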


\begin{proof}\mbox{}\\
\ref{noe1} 
Let $X\in\mathfrak X(M)$ and let $\varphi_s$ be its flow. The pullback action on $T\mathcal M$ is $(g,k)\mapsto(\varphi_s^*g,\varphi_s^*k)$. Its infinitesimal generator is \[ 
\left.\frac{d}{ds}\right|_{0} (\varphi_s^*g,\varphi_s^*k) = (\mathcal L_Xg,\mathcal L_Xk). 
\] 
In particular, $\zeta_X(g)=\mathcal L_Xg$, and $D\zeta_X(g)(k)=\mathcal L_Xk$, 
the latter identity following from the linearity of $g\mapsto\mathcal L_Xg$. 
The naturality of the pointwise product $\langle\cdot,\cdot\rangle_g^c$ and 
of the Riemannian density gives 
$G_{\varphi_s^*g}^c(\varphi_s^*k,\varphi_s^*k) = G_g^c(k,k)$.
Differentiating at $s=0$ and using the chain rule yields 
\[ 
D_1\bigl(G^c(k,k)\bigr)(g)(\mathcal L_Xg) + 2G_g^c(k,\mathcal L_Xk) = 0\,.
\] 
Equivalently, 
\[ 
\frac12D_1\bigl(G^c(k,k)\bigr)(g)(\mathcal L_Xg) + G_g^c(k,\mathcal L_Xk) = 0\,. 
\] 
Likewise, since $V$ is a Riemannian functional, $V(\varphi_s^*g)=V(g)$, and differentiation at $s=0$ gives $dV(g)(\mathcal L_Xg)=0$. For $L(g,k)=\frac12G_g^c(k,k)-V(g)$, we have 
\[ 
D_1L(g,k)(l) = \frac12D_1\bigl(G^c(k,k)\bigr)(g)(l)-dV(g)(l) 
\] 
and \[ D_2L(g,k)(m)=G_g^c(k,m). \] Consequently, \[ \begin{aligned} &D_1L(g,k)\bigl(\zeta_X(g)\bigr) +D_2L(g,k)\bigl(D\zeta_X(g)(k)\bigr) \\ &= \frac12D_1\bigl(G^c(k,k)\bigr)(g)(\mathcal L_Xg) -dV(g)(\mathcal L_Xg) +G_g^c(k,\mathcal L_Xk) =0. \end{aligned} \] This is precisely the infinitesimal invariance condition \eqref{eq:quasiinv} with $F=0$ and with $\zeta_X$ as the symmetry vector field.

\noindent\ref{noe2} For a mechanical Lagrangian, 
\[ 
F\mathcal L(g,k)(h)=D_2L(g,k)(h)=G_g^c(k,h). 
\] 
Hence 
\[ 
J_{\zeta_X}(g,k) = F\mathcal L(g,k)\bigl(\zeta_X(g)\bigr) = G_g^c(k,\mathcal L_Xg). 
\] 
Since $\mathcal L_Xg=2\delta_g^*(X^\flat)$, the definition of the formal adjoint $\delta_g^c$ gives 
\[ 
\begin{aligned} 
J_{\zeta_X}(g,k) &= 2G_g^c\bigl(k,\delta_g^*(X^\flat)\bigr) 
= 2\int_M \left\langle X^\flat,\delta_g^c k\right\rangle_g\,\mu_g 
= 2\int_M g\bigl(X,(\delta_g^c k)^\sharp\bigr)\,\mu_g. 
\end{aligned} 
\] 
This defines $\mathbf J(g,k)\in\mathfrak X(M)'$ by 
$\left\langle\mathbf J(g,k),X\right\rangle := J_{\zeta_X}(g,k)$.
By part~\ref{noe1}, $\mathcal L$ is $\zeta_X$-invariant for every fixed $X\in\mathfrak X(M)$. Theorem~\ref{thm:noether} therefore shows that 
$t\mapsto \left\langle\mathbf J(g_t,g_t'),X\right\rangle$ is constant along every critical curve, for every fixed $X$.

\noindent\ref{noe3} If $\delta_g^c k=0$, the preceding integral formula gives $\mathbf J(g,k)=0$. Conversely, if $\mathbf J(g,k)=0$, then \[ \int_M g\bigl(X,(\delta_g^c k)^\sharp\bigr)\,\mu_g=0 \] for every $X\in\mathfrak X(M)$. Taking $X=(\delta_g^c k)^\sharp$ yields \[ 0 = \int_M \lvert\delta_g^c k\rvert_g^2\,\mu_g. \] Since the integrand is nonnegative, it follows that $\delta_g^c k=0$ identically. Finally, the image of the infinitesimal action at $g$ is \[ \{\mathcal L_Xg:X\in\mathfrak X(M)\} = \operatorname{im}(2\delta_g^*). \] Thus $k$ is $G_g^c$-orthogonal to this image if and only if \[ 0 = G_g^c(k,\mathcal L_Xg) = 2\int_M \left\langle X^\flat,\delta_g^c k\right\rangle_g\,\mu_g \] for every $X$, which is equivalent to $\delta_g^c k=0$. Therefore $\mathbf J(g,k)=0$  if and only if  $\delta_g^c k=0$  if and only if   $k\perp_{G_g^c}\operatorname{im}(2\delta_g^*)$.
\end{proof}

\begin{proposition}[Equivariance]\label{prop:equivariance}
For all $\varphi\in\Diff(M)$, $X\in\mathfrak{X}(M)$ and
$(g,k)\in T\Mm$,
\[
\bigl\langle\mathbf{J}(\varphi^{*}g,\varphi^{*}k),\,X\bigr\rangle
=\bigl\langle\mathbf{J}(g,k),\,\varphi_{*}X\bigr\rangle .
\]
Thus $\mathbf{J}$ is equivariant for the pullback action on $T\Mm$ and
the (co)adjoint type action $X\mapsto\varphi_*X$ on $\mathfrak{X}(M)$.
\end{proposition}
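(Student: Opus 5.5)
The plan is to start from the defining formula $\langle\mathbf{J}(g,k),X\rangle=G^{c}_g(k,\LD{X}g)$ of Proposition~\ref{prop:momentum}\ref{noe2} and to use the naturality of every ingredient under diffeomorphisms. First evaluate the left hand side:
\[
\bigl\langle\mathbf{J}(\varphi^{*}g,\varphi^{*}k),X\bigr\rangle
=G^{c}_{\varphi^{*}g}\bigl(\varphi^{*}k,\LD{X}(\varphi^{*}g)\bigr).
\]
Next use the classical identity $\LD{X}(\varphi^{*}g)=\varphi^{*}\bigl(\LD{\varphi_*X}g\bigr)$. I will check it by differentiating $\psi_s^{*}\varphi^{*}g=\varphi^{*}\bigl((\varphi\circ\psi_s\circ\varphi^{-1})^{*}g\bigr)$ at $s=0$, where $\psi_s$ is the flow of $X$. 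Here $\varphi\circ\psi_s\circ\varphi^{-1}$ is the flow of $\varphi_*X$, and $\varphi^{*}$ is continuous and linear on $\Sym(M)$, so it commutes with $d/ds$.

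With this identity the left hand side becomes $G^{c}_{\varphi^{*}g}\bigl(\varphi^{*}k,\varphi^{*}h\bigr)$ with $h=\LD{\varphi_*X}g$. I then invoke the pullback invariance of $G^{c}$, namely $G^{c}_{\varphi^{*}g}(\varphi^{*}k,\varphi^{*}h)=G^{c}_g(k,h)$. This was used for $h=k$ in the proof of Proposition~\ref{prop:momentum}\ref{noe1}, and for general $h$ it follows by polarization, or directly from two facts. The first is pointwise naturality: $\ip{\varphi^{*}k}{\varphi^{*}h}{\varphi^{*}g}^{c}=\varphi^{*}\bigl(\ip{k}{h}{g}^{c}\bigr)$, since traces and $g^{-1}$ commute with pullback. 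The second is $\dv_{\varphi^{*}g}=\varphi^{*}\dv_g$ together with the change of variables formula $\int_M\varphi^{*}(f\dv_g)=\int_M f\dv_g$. Note that $\varphi^{*}$ of a density involves $|\det|$, so orientation is irrelevant. The result is $G^{c}_g(k,\LD{\varphi_*X}g)=\langle\mathbf{J}(g,k),\varphi_*X\rangle$, which is the claim.

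Finally, I will remark that this is equivariance for the stated actions. Both are right actions: $(\varphi\circ\psi)^{*}=\psi^{*}\varphi^{*}$, and correspondingly $(\varphi\circ\psi)_*=\varphi_*\psi_*$ on the dual side. The formula therefore says $\mathbf{J}\circ T(\cdot\,\varphi)=(\varphi_*)^{\mathsf T}\circ\mathbf{J}$.

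I expect the main obstacle to be sign and convention bookkeeping in the identity $\LD{X}\varphi^{*}=\varphi^{*}\LD{\varphi_*X}$, specifically whether $\varphi_*X$ or $\varphi^{-1}_*X$ appears. I will settle this carefully with $\varphi_*X=T\varphi\circ X\circ\varphi^{-1}$ and the conjugated flow computation above. As a cross-check, I can also verify the formula through the density expression $2(\delta^{c}_gk)\otimes\dv_g$, using the naturality $\delta^{c}_{\varphi^{*}g}\varphi^{*}k=\varphi^{*}\delta^{c}_gk$.
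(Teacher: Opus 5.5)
Your proposal is correct and is essentially the paper's proof. The paper combines exactly the identity $\LD{X}(\varphi^{*}g)=\varphi^{*}\bigl(\LD{\varphi_{*}X}g\bigr)$ with the $\Diff(M)$-invariance of $G^{c}$, and you supply the details it leaves implicit (the conjugated-flow derivation, polarization, and naturality of the pointwise product and of the density); one cosmetic point is that $X\mapsto\varphi_{*}X$ is a left action on $\mathfrak{X}(M)$, and it is its transpose on $\mathfrak{X}(M)'$ that is the right action expressed by your formula $\mathbf{J}\circ T(\cdot\,\varphi)=(\varphi_{*})^{\mathsf T}\circ\mathbf{J}$.
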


\begin{proof}
$\LD{X}(\varphi^{*}g)=\varphi^{*}\bigl(\LD{\varphi_{*}X}g\bigr)$ and the
$\Diff$-invariance of $G^{c}$ give the identity
$G^{c}_{\varphi^{*}g}(\varphi^{*}k,\LD{X}\varphi^{*}g)
=G^{c}_{g}(k,\LD{\varphi_*X}g)$.
\end{proof}

\begin{remark}\label{rem:FMcomparison}
Proposition~\ref{prop:momentum} is the Lagrangian counterpart of
\cite[\S6]{FM72b}, where the conserved quantity
$(g,k)\mapsto\mathcal{G}_g(k,-\LD{Z}g)$ is derived from the Hamiltonian
conservation scheme. We use the term ``Noether momentum map'' in the sense made
precise above, that is, $\mathbf{J}$ is the assembly of the
$\Diff(M)-$Noether charges, and
Propositions~\ref{prop:momentum}--\ref{prop:equivariance} establish exactly
its charge pairing, its conservation and its equivariance. However, we do not 
consider here the canonical (pre)symplectic form on $T\Mm$, nor do we verify the moment
map identity $\iota_{\zeta_X}\omega=d\langle\mathbf{J},X\rangle$. In the Hamiltonian
picture of \cite{FM72b,MR99}, $\mathbf{J}$ is expected to coincide with the
momentum map of the cotangent lifted action, and the fact that the constraint
surface of geometrodynamics is one of its zero level sets underlies the
reduction picture of superspace \cite{Giu09}.
\end{remark}

\section{The Einstein equations as a Lagrangian dynamical system}
\label{sec:einstein}

\subsection{$(n+1)-$geometry and curves of metrics}
\label{ssec:setup}
Let $\widetilde M=I\times M$ and let $g_t$, $t\in I$, be a smooth curve in
$\Mm$. Fix a real constant $\alpha\neq0$ and define the pseudo-Riemannian
metric $\widetilde g=\widetilde g(\alpha;g_\bullet)$ on $\widetilde M$ by
\begin{equation}\label{eq:tildeg}
\widetilde g\big|_{T_pM\times T_pM}=g_t,\qquad
\widetilde g(\partial_t,\cdot)\big|_{TM}=0,\qquad
\widetilde g(\partial_t,\partial_t)=\alpha ;
\end{equation}
in adapted coordinates,
$\widetilde g=\alpha\,dt^2+(g_t)_{ij}\,dx^i dx^j$. The case $\alpha=-N^2<0$
is the Lorentzian metric of the $3{+}1$ (ADM) formalism with constant
lapse $N$ and vanishing shift \cite{ADM62,FM72b}, while $\alpha>0$ produces a
Riemannian ``spacetime''. This is the situation studied by Gil-Medrano
\cite{GM96}, whose main results we now quote in the notation of
\S\ref{ssec:christoffel}. Recall $\tau_\lambda=\tau-\lambda(n-1)$ and
$S_\lambda$ from \eqref{eq:Slambda}, and define the
\emph{energy density} (or Hamiltonian density)
\begin{equation}\label{eq:Hamdensity}
\Ham_\lambda(g,k):=\tfrac12\,\ip{k}{k}{g}^{-}+2\alpha\,\tau_\lambda(g)
\in C^{\infty}(M),\qquad (g,k)\in T\Mm .
\end{equation}

\begin{theorem}[{Gil-Medrano \cite[Props.~4.3 and 4.5]{GM96}}]
\label{thm:GMtheorem}
Let $n\ge2$, $\lambda\in\R$, $\alpha\neq0$, and let $g_t$, $t\in I$, be a
curve in $\Mm$. The metric $\widetilde g$ of \eqref{eq:tildeg} is Einstein
with constant $\lambda$, that is, $\rho(\widetilde g)=\lambda\widetilde g$,
if and only if the curve verifies the \emph{evolution equation}
\begin{equation}\label{eq:evolution}
g''=\Gamma^{-}_{g}(g',g')-2\alpha\,\vgrad^{-}S_\lambda(g)
\end{equation}
together with the \emph{constraints}
\begin{equation}\label{eq:constraints}
\delta^{-}_{g}g'=0
\qquad\text{and}\qquad
\Ham_\lambda(g,g')=0 .
\end{equation}
For $\lambda=0$, $\alpha=-1$, $n=3$ this is the vacuum Einstein system of
\cite{DeW67,FM72b}. Moreover \cite[Prop.~4.4]{GM96}, within the family
$G^{c}$ the DeWitt value $c=c^{-}$ is the only one for which an
equivalence of this form can hold, because for $c\neq c^{-}$ the analogous
system forces $g_t\equiv g_0$ with $g_0$ Einstein.
\end{theorem}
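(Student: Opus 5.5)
The plan is to prove the statement through the Gauss--Codazzi decomposition of the Ricci tensor of $\widetilde g=\alpha\,dt^2+g_t$. Each slice $\{t\}\times M$ is a hypersurface whose unit normal is $\nu=|\alpha|^{-1/2}\partial_t$, with $\widetilde g(\nu,\nu)=\varepsilon:=\operatorname{sign}\alpha$. Since the lapse is constant and the shift vanishes, the second fundamental form is $K=\tfrac{1}{2}|\alpha|^{-1/2}g'$.

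The first step is to write the three blocks of $\rho(\widetilde g)$ in adapted coordinates: the normal--normal component $\rho(\partial_t,\partial_t)$, the mixed components $\rho(\partial_t,\partial_i)$, and the tangential components $\rho_{ij}$. This is a direct Christoffel computation, since the only nonzero symbols are $\widetilde\Gamma^{t}_{ij}=-\tfrac{1}{2\alpha}g'_{ij}$, $\widetilde\Gamma^{i}_{tj}=\tfrac12 g^{ik}g'_{kj}$ and the spatial ones. With $k=g'$ the results should be:
\begin{align*}
\rho(\partial_t,\partial_t)&=-\tfrac12\tr_g(k')+\tfrac14\ip{k}{k}{g},\\
\rho(\partial_t,\partial_i)&=\tfrac12\bigl(\delta_g k+d(\tr_g k)\bigr)_i=\tfrac12(\delta^{-}_g k)_i,\\
\rho_{ij}&=\rho(g)_{ij}-\tfrac{1}{2\alpha}\bigl(k'-kg^{-1}k+\tfrac12(\tr_g k)k\bigr)_{ij}.
\end{align*}
The mixed equation $\rho(\partial_t,\partial_i)=0$ is therefore exactly the momentum constraint $\delta^-_g g'=0$, by \eqref{eq:deltaminus}; this part is Codazzi's equation. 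The signs must be checked carefully against the convention $\delta_g=-\operatorname{div}$.

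The second step treats the tangential and normal equations together. The conditions $\rho_{ij}=\lambda g_{ij}$ and $\rho(\partial_t,\partial_t)=\lambda\alpha$ form a system whose trace combination eliminates $\tr_g k'$ and yields the scalar Gauss equation. The plan is as follows. Take the $g$-trace of the tangential equation and add $1/\alpha$ times the normal equation; this removes $\tr_g k'$ and gives $\tfrac12\bigl(\ip{k}{k}{g}-(\tr_gk)^2\bigr)+2\alpha(\tau-(n-1)\lambda)=0$ up to a sign and a factor, which is $\Ham_\lambda=0$ by Lemma~\ref{lem:cidentities}(a). The key algebraic check is the coefficient of $\lambda$, namely $n\lambda-\lambda=(n-1)\lambda$.

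Conversely, one adjusts the tangential equation by a multiple of $g$ in order to match \eqref{eq:evolution}. Expanding \eqref{eq:evolution} with \eqref{eq:sprayminus} and \eqref{eq:gradSlambda} gives
\[k'=kg^{-1}k-\tfrac12(\tr_gk)k-\tfrac{1}{4(n-1)}\ip{k}{k}{g}^-g+2\alpha\rho-\tfrac{\alpha}{n-1}\tau g-\alpha\lambda g.\]
Comparing this with the tangential equation $k'=kg^{-1}k-\tfrac12(\tr_gk)k+2\alpha\rho-2\alpha\lambda g$, the two differ by the multiple of $g$
\[-\tfrac{1}{n-1}\Bigl(\tfrac14\ip{k}{k}{g}^-+\alpha\tau-\alpha\lambda(n-1)\Bigr)g=-\tfrac{1}{2(n-1)}\Ham_\lambda\, g.\]
Hence, in the presence of the Hamiltonian constraint, the tangential Einstein equation is equivalent to \eqref{eq:evolution}. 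The normal equation is then the trace combination already obtained, and so it is implied as well.

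The final step assembles the logic. If $\widetilde g$ is Einstein, all three blocks hold, and the argument above yields the two constraints and, after subtracting the $\Ham_\lambda g$ term, \eqref{eq:evolution}. For the converse, the momentum constraint gives the mixed block, and \eqref{eq:evolution} together with $\Ham_\lambda=0$ gives the tangential block; substituting the trace of $k'$ from the evolution into $\rho(\partial_t,\partial_t)$ and using $\Ham_\lambda=0$ gives the normal block. The main obstacle is the bookkeeping of signs and factors in the $\alpha$-dependent Christoffel computation, in particular the factor $\tfrac{1}{\alpha}$ and the sign conventions for $\delta_g$ and $\rho$. I would first verify the formulas on the homothetic case $g_t=c(t)g_0$ as a sanity check.

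The uniqueness of the DeWitt value is handled separately. Replacing $G^-$ by $G^c$ alters both the $g$-coefficient in the Christoffel map \eqref{eq:spray} and the gradient \eqref{eq:gradients}. Matching the analogous system to the tangential Einstein equation then forces the trace parts of $k$ and $\rho$ to satisfy extra relations, which are consistent only if $k=0$ and $\rho_0=0$, so that $g_t\equiv g_0$ with $g_0$ Einstein.
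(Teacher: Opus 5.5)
The paper gives no proof of this statement; it imports it from \cite{GM96}. Your Gauss--Codazzi computation is therefore a self-contained route the paper does not take, and for the main equivalence it is correct. The Christoffel symbols, the three blocks of $\rho(\widetilde g)$, and the discrepancy $-\tfrac{1}{2(n-1)}\Ham_\lambda\,g$ between \eqref{eq:evolutionexplicit} and the tangential equation are all right. The computation also shows why $G^-$ appears: the $g$-coefficients of $\Gamma^-$ and $\vgrad^-S_\lambda$ assemble exactly into a multiple of the energy density, which happens for no other value of $c$.

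Two slips are harmless but should be fixed. With $\delta_g=-\operatorname{div}$, the mixed block is $\rho(\widetilde g)(\partial_t,\partial_i)=-\tfrac12(\delta^-_gk)_i$; the sign is irrelevant because this block is set to zero. Also, $\tr_gk'$ is eliminated by \emph{subtracting}, not adding, $\tfrac1\alpha$ times the normal equation. The clean form is the identity, valid along any curve,
\[
\tr_g\bigl(\rho(\widetilde g)|_{TM}-\lambda g\bigr)-\tfrac1\alpha\bigl(\rho(\widetilde g)(\partial_t,\partial_t)-\alpha\lambda\bigr)=\tfrac{1}{2\alpha}\,\Ham_\lambda(g,k),
\]
which handles the normal block in both directions at once.

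The genuine gap is the last clause, on the uniqueness of $c^-$: the mechanism you describe does not produce $k=0$ and $\rho_0=0$. The $G^c$ evolution, built from \eqref{eq:spray}, \eqref{eq:gradients} and $\vgrad^cV=\tfrac{1}{2cn}g$, differs from the tangential Einstein equation by a \emph{pure trace} term $\phi\,g$. Using $\ip{k}{k}{g}^c=\ip{k}{k}{g}^-+(c-c^-)(\tr_gk)^2$ and $\Ham_\lambda=0$, one finds
\[
4cn\,\phi=(c-c^-)\bigl[(\tr_gk)^2-8\alpha(\tau-n\lambda)\bigr].
\]
This single scalar relation involves only $\tr_gk$ and $\tau$, and says nothing about $k_0$ or $\rho_0$. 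Indeed, take $n=2$, $\lambda=0$, $\alpha=-1$. The $2{+}1$ Milne curve $g_t=t^2g_{\mathrm{hyp}}$ satisfies both DeWitt constraints and the $G^c$ evolution for \emph{every} $c$, yet it is not constant.

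To go further you must specify the analogous system, in particular its Hamiltonian constraint $\tfrac12\ip{k}{k}{g}^c+2\alpha\tau_\lambda=0$. Its difference with $\Ham_\lambda$ is $\tfrac12(c-c^-)(\tr_gk)^2$, so it forces $\tr_gk=0$. The relation above then gives $\tau=n\lambda$, and $\Ham_\lambda=0$ becomes $|k|_g^2=-4\alpha\lambda$. Only for $\lambda=0$ does this give $k\equiv0$, after which the tangential equation gives $\rho(g_0)=0$.

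For $\lambda\neq0$, constancy cannot come out this way. In fact no constant curve solves the Einstein system: with $k=0$, $\Ham_\lambda=0$ gives $\tau=(n-1)\lambda$, while the tangential equation gives $\tau=n\lambda$. So for $\lambda\neq0$ the clause can only hold vacuously, as a nonexistence statement that needs its own proof.

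The weaker claim, that no equivalence of this form holds for $c\neq c^-$, is easy. It suffices to exhibit an Einstein curve violating the $G^c$ system, e.g.\ the Milne or de Sitter curves of Section~\ref{sec:examples}. They have $\tr_gk\neq0$, so they violate the $G^c$ Hamiltonian constraint. For $n=3$ they also violate the $G^c$ evolution, by the displayed relation.
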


Explicitly, by \eqref{eq:sprayminus} and \eqref{eq:gradSlambda},
\eqref{eq:evolution} reads
\begin{equation}\label{eq:evolutionexplicit}
g''=g'g^{-1}g'-\tfrac12\tr_g(g')\,g'
-\tfrac{1}{4(n-1)}\ip{g'}{g'}{g}^{-}\,g
+2\alpha\,\rho(g)-\tfrac{\alpha}{n-1}\,\tau(g)\,g-\alpha\lambda\,g .
\end{equation}

\subsection{The constraints as Noether charges}\label{ssec:charges}
Theorem~\ref{thm:mechEL} allows us to convert \eqref{eq:evolution} into a
variational statement.

\begin{proposition}\label{prop:chargeident}
Define the \emph{DeWitt Lagrangian} $\mathcal{L}_{\alpha,\lambda}\in C^{\infty}(T\Mm)$
as
\begin{equation}\label{eq:deWittLag}
\mathcal{L}_{\alpha,\lambda}(g,k)
=\tfrac12\,G^{-}_g(k,k)-2\alpha\,S_\lambda(g)\,,
\end{equation}
that is, a mechanical-type Lagrangian on $(\Mm,G^{-},\Gamma^{-})$ with potential
$V=2\alpha S_\lambda$. Then:
\begin{enumerate}[label=\textit{\alph*)}]
\item\label{var1} a curve $g_t$ is critical for $\mathcal{L}_{\alpha,\lambda}$ if and
only if it verifies the evolution equation \eqref{eq:evolution};
\item\label{var2} the energy of $\mathcal{L}_{\alpha,\lambda}$ is the integrated
energy density,
\[
\Ee_{\mathcal{L}_{\alpha,\lambda}}(g,k)
=\tfrac12G^{-}_g(k,k)+2\alpha S_\lambda(g)
=\int_M\Ham_\lambda(g,k)\,\dv_g ;
\]
\item\label{var3} the momentum map of Proposition~\textup{\ref{prop:momentum}} 
(for $c=c^{-}$) is
\[
\langle\mathbf{J}(g,k),X\rangle
=2\int_M\ip{X^{\flat}}{\delta^{-}_gk}{g}\dv_g\,,
\]
and the constraints \eqref{eq:constraints} say precisely that
\[
\mathbf{J}(g,g')=0\ 
\qquad\text{and}\qquad
\Ham_\lambda(g,g')=0 \textup{ pointwise on } M
\]
(the \emph{momentum constraint} and the \emph{Hamiltonian constraint}, respectively.) 
In particular the Hamiltonian constraint is a pointwise refinement
of the vanishing of the Noether energy.
\end{enumerate}
\end{proposition}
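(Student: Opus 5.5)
The proof will treat the three items in turn, reducing each to results already established. For \ref{var1}, we apply Theorem~\ref{thm:mechEL} to the weak pseudo-Riemannian structure with Levi-Civita connection $(G^{-},\Gamma^{-})$ on $\Nn=\Mm$ and to the potential $V=2\alpha S_\lambda$. The required hypotheses are as follows. $G^{-}$ is weakly non degenerate by Lemma~\ref{lem:cidentities}\ref{aux2}. $\Gamma^{-}$ is symmetric and satisfies \eqref{eq:metricity} by \cite{GMi91,GM96}. $V$ admits the smooth $G^{-}$-gradient $\vgrad V=2\alpha\,\vgrad^{-}S_\lambda$ given by \eqref{eq:gradSlambda}; smoothness holds because $g\mapsto\rho(g)$ and $g\mapsto\tau(g)$ are polynomial in $g,g^{-1}$ and finitely many derivatives. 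Theorem~\ref{thm:mechEL} then says that $g_t$ is critical if and only if $g''=\Gamma^{-}_g(g',g')-2\alpha\,\vgrad^{-}S_\lambda(g)$, which is exactly \eqref{eq:evolution}.

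For \ref{var2}, Definition~\ref{def:energy} gives $\Ee=\tfrac12G^{-}_g(k,k)+V(g)=\tfrac12G^{-}_g(k,k)+2\alpha S_\lambda(g)$. We then write $G^{-}_g(k,k)=\int_M\ip{k}{k}{g}^{-}\dv_g$ and $S_\lambda(g)=\int_M\tau_\lambda(g)\dv_g$ from \eqref{eq:Slambda}, and collect the integrands. The combined integrand is $\tfrac12\ip{k}{k}{g}^{-}+2\alpha\tau_\lambda(g)=\Ham_\lambda(g,k)$ by \eqref{eq:Hamdensity}.

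For \ref{var3}, the formula for $\mathbf{J}$ is Proposition~\ref{prop:momentum}\ref{noe2} with $c=c^{-}$. This requires checking that the potential $2\alpha S_\lambda$ is a Riemannian functional. That holds because $\tau(\varphi^*g)=\varphi^*\tau(g)$ and $\dv_{\varphi^*g}=\varphi^*\dv_g$, so $S$ and $V$ are $\Diff(M)$-invariant. Proposition~\ref{prop:momentum}\ref{noe3} then gives $\mathbf{J}(g,g')=0\iff\delta^{-}_gg'=0$, which is the first constraint in \eqref{eq:constraints}. The second constraint is literally $\Ham_\lambda(g,g')=0$ as a function on $M$. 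Combined with \ref{var2}, it implies $\Ee_{\mathcal{L}_{\alpha,\lambda}}(g,g')=0$, but not conversely; this is the sense of ``pointwise refinement''. If desired, we could add a one-line example of a nonzero $\Ham_\lambda$ with zero integral.

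No step is a real obstacle; the work is bookkeeping. The only point needing care is in \ref{var1}: we must confirm that the quoted $\Gamma^{-}$ really is a Levi-Civita connection in the sense of Definition~\ref{def:mechanical}, with the sign convention of \eqref{eq:covder}, and that $\vgrad^{-}S_\lambda$ is smooth as a map $\Mm\to\Sym(M)$. Both are taken from \cite{GMi91,GM96} together with \eqref{eq:gradSlambda}.
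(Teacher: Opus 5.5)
Your proposal is correct and follows the paper's proof essentially step for step: (a) is Theorem~\ref{thm:mechEL} applied with the $G^{-}$-gradient \eqref{eq:gradSlambda}, (b) is Definition~\ref{def:energy} combined with \eqref{eq:Hamdensity}, and (c) is parts (b)--(c) of Proposition~\ref{prop:momentum}, using that $S_\lambda$ is a Riemannian functional. One cosmetic point: in (c) you write $V$ for the volume functional right after using $V$ for the potential $2\alpha S_\lambda$, so rename one of them.
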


\begin{proof}\mbox{}\\
\ref{var1} $V=2\alpha S_\lambda$ admits the $G^{-}-$gradient
$2\alpha\,\vgrad^{-}S_\lambda$ by \eqref{eq:gradSlambda}, so
Theorem~\ref{thm:mechEL} applies and \eqref{eq:mechEL} is
\eqref{eq:evolution}. 

\noindent\ref{var2} is Definition~\ref{def:energy} together with
\eqref{eq:Hamdensity} and $G^{-}_g(k,k)=\int\ip{k}{k}{g}^{-}\dv_g$. 

\noindent\ref{var3} is Proposition~\ref{prop:momentum}\,\ref{noe2}--\ref{noe3}, $S_\lambda$ being a
Riemannian functional.
\end{proof}

\begin{remark}\label{rem:physics}
The physical interpretation of this result is the classical one 
\cite{DeW67,FM72b,Giu09}.
Geometrodynamics is the motion of a ``point'' $g$ in the superspace
potential $-2\alpha S_\lambda$, and the general covariance manifests itself in
the requirement that the motion have vanishing $\Diff(M)-$momentum and
vanishing energy \emph{density}, and not merely vanishing total energy.
Misner observed, in the context of a topologically invariant quantum theory of gravity, 
that the generator of infinitesimal deformations of a hypersurface must vanish; see 
\cite[Sec.~7, especially p.~508]{Mis57}. Fischer and Marsden gave a classical formulation 
of the corresponding local statement in \cite[Thm.~7.1]{FM72b}, where covariance under 
local changes of the time parametrization leads to the pointwise Hamiltonian constraint. 
In our constant-lapse, zero-shift setting, this condition is imposed on the initial data, 
and we prove next that it is propagated by the evolution equation.
\end{remark}

\subsection{Propagation of the momentum constraint}\label{ssec:momprop}

\begin{theorem}\label{thm:momprop}
Let $g_t$, $t\in I$, be any critical curve of
$\mathcal{L}_{\alpha,\lambda}$ (equivalently, any solution of
\eqref{eq:evolution}). If $\delta^{-}_{g_{t_0}}g'_{t_0}=0$ for some
$t_0\in I$, then
\[
\delta^{-}_{g_t}g'_t=0\qquad\text{for all }t\in I .
\]
Equivalently, every smooth solution of \eqref{eq:evolution} preserves the
momentum constraint set $\{\mathbf{J}=0\}$, so a solution which starts
$G^{-}-$orthogonal to the orbits of the $\Diff(M)$ action, stays so forever.
\end{theorem}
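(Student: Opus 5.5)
The plan is to deduce the statement directly from the Noether theorem, through the momentum map of Proposition~\ref{prop:momentum}, with no uniqueness theory for partial differential equations. First, by Proposition~\ref{prop:chargeident}\,\ref{var1}, a solution of \eqref{eq:evolution} is the same thing as a critical curve of $\mathcal{L}_{\alpha,\lambda}$. This Lagrangian is of mechanical type on $(\Mm,G^{-},\Gamma^{-})$, and its potential $V=2\alpha S_\lambda$ is a Riemannian functional: both the scalar curvature and the Riemannian density are natural under pullback, so $S_\lambda(\varphi^{*}g)=S_\lambda(g)$. Hence Proposition~\ref{prop:momentum}\,\ref{noe1}--\ref{noe2} applies with $c=c^{-}$. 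For every fixed $X\in\mathfrak{X}(M)$, the function
\[
t\longmapsto\langle\mathbf{J}(g_t,g'_t),X\rangle
=2\int_M\ip{X^{\flat}}{\delta^{-}_{g_t}g'_t}{g_t}\,\dv_{g_t}
\]
is constant on $I$. Note that $X$ is fixed, while $X^{\flat}$ is taken with respect to $g_t$; this is harmless, because the pairing is simply $2\int_M (\delta^{-}_{g_t}g'_t)(X)\,\dv_{g_t}$.

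Next, the hypothesis $\delta^{-}_{g_{t_0}}g'_{t_0}=0$ gives $\langle\mathbf{J}(g_{t_0},g'_{t_0}),X\rangle=0$ for every $X$. By constancy, $\langle\mathbf{J}(g_t,g'_t),X\rangle=0$ for all $t\in I$ and all $X$. Proposition~\ref{prop:momentum}\,\ref{noe3} then gives $\delta^{-}_{g_t}g'_t=0$ at each fixed $t$: take $X=(\delta^{-}_{g_t}g'_t)^{\sharp_{g_t}}$ and use positive definiteness of $g_t$.

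The equivalent formulations follow at once. The set $\{\mathbf{J}=0\}$ is preserved, by the above. Orthogonality to the $\Diff(M)$-orbits is the last equivalence in Proposition~\ref{prop:momentum}\,\ref{noe3}, since the tangent space to the orbit through $g$ is $\{\LD{X}g\}$.

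The only delicate point is the order of quantifiers. Conservation holds separately for each fixed $X$, and it is essential that the vector field used to test vanishing at time $t$ is chosen only after the conservation has been established for all $X$. Since the family of conserved quantities is indexed by all $X$, this is legitimate. No flow on $\Mm$ and no existence or uniqueness result for the evolution is needed, only that $g_t$ is a given smooth solution.
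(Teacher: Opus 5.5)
Your proposal is correct and is essentially the paper's proof. You show that $\langle\mathbf{J}(g_t,g'_t),X\rangle$ is constant for each fixed $X$ (Noether via Proposition~\ref{prop:momentum} and Proposition~\ref{prop:chargeident}), observe that it vanishes at $t_0$ for every $X$, and conclude with Proposition~\ref{prop:momentum}\,\ref{noe3} at each $t$; your extra checks (that $S_\lambda$ is a Riemannian functional, and that $X^{\flat}$ is taken with respect to $g_t$) are correct details the paper leaves implicit.
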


\begin{proof}
By Propositions~\ref{prop:momentum} and \ref{prop:chargeident}\ref{var3}, for
each fixed $X\in\mathfrak{X}(M)$ the function
$t\mapsto\langle\mathbf{J}(g_t,g'_t),X\rangle
=2\int_M\ip{X^{\flat}}{\delta^{-}_{g_t}g'_t}{g_t}\dv_{g_t}$ is constant
along the critical curve. At $t=t_0$ it vanishes for every $X$, hence it
vanishes identically for every $X$ and every $t$, and
Proposition~\ref{prop:momentum}\ref{noe3} gives $\delta^{-}_{g_t}g'_t=0$ for
each $t$.
\end{proof}

\begin{remark}
This argument replaces the reasoning with linear systems of partial
differential equations of \cite[Prop.~3.5]{FM72b} (for this half of the
constraints) by symmetry considerations, and it does not need any
uniqueness theory at all; in a sense, we could say that the Noether
constants do all the work. We think this is more convenient from a
geometric point of view.
\end{remark}

\subsection{A pointwise transport law for the energy density}
\label{ssec:transport}
The next theorem and its corollaries are the main technical result of this section. 
The theorem holds along every solution of the evolution equation, 
without assuming any constraint, and for all $n\ge2$, $\alpha\neq0$, $\lambda\in\R$.

\begin{theorem}[Transport identity]\label{thm:transport}
Along every solution $g_t$ of the evolution equation
\eqref{eq:evolution}, with $k:=g'$, the energy density
\eqref{eq:Hamdensity} verifies
\begin{equation}\label{eq:transport}
\partial_t\,\Ham_\lambda(g,k)
=-\tfrac12\,\tr_g(k)\,\Ham_\lambda(g,k)
+2\alpha\,\delta_g\bigl(\delta^{-}_{g}k\bigr),
\end{equation}
equivalently, since $\partial_t\dv_{g}=\tfrac12\tr_g(k)\,\dv_g$,
\begin{equation}\label{eq:transportdensity}
\partial_t\Bigl(\Ham_\lambda(g,k)\,\dv_{g}\Bigr)
=2\alpha\,\delta_g\bigl(\delta^{-}_{g}k\bigr)\,\dv_{g} .
\end{equation}
\end{theorem}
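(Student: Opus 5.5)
The plan is a direct pointwise computation: differentiate both summands of $\Ham_\lambda(g,k)=\tfrac12\ip{k}{k}{g}^{-}+2\alpha\tau_\lambda(g)$ in $t$ along the curve, and substitute $k'=g''$ from the explicit form \eqref{eq:evolutionexplicit}. Write $k'=\Gamma^{-}_g(k,k)+P$ with
\[
P:=2\alpha\rho(g)-\tfrac{\alpha}{n-1}\tau(g)g-\alpha\lambda g .
\]
Everything is pointwise on $M$ except the curvature variation, which comes from \eqref{eq:variations}. The second form \eqref{eq:transportdensity} then follows from the first by multiplying by $\dv_g$ and using $\partial_t\dv_g=\tfrac12\tr_g(k)\dv_g$, again from \eqref{eq:variations}.

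\emph{Curvature term.} Since $\lambda$ is constant, $\partial_t\tau_\lambda=D\tau(g)(k)=\Delta_g(\tr_gk)+\delta_g\delta_gk-\ip{\rho}{k}{g}$. By \eqref{eq:deltaminus}, $\delta_g(\delta^{-}_gk)=\delta_g\delta_gk+\delta_gd(\tr_gk)=\delta_g\delta_gk+\Delta_g\tr_gk$. Hence
\[
2\alpha\,\partial_t\tau_\lambda=2\alpha\,\delta_g(\delta^{-}_gk)-2\alpha\ip{\rho}{k}{g}.
\]
This already produces the divergence term of \eqref{eq:transport}. It remains to show that the kinetic derivative equals $2\alpha\ip{\rho}{k}{g}-\tfrac12\tr_g(k)\Ham_\lambda$.

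\emph{Kinetic term.} Use Lemma~\ref{lem:cidentities}(a), $\ip{k}{k}{g}^{-}=\tr((g^{-1}k)^2)-(\tr_gk)^2$, together with $\partial_tg^{-1}=-g^{-1}kg^{-1}$. Then
\[
\tfrac12\partial_t\ip{k}{k}{g}^{-}=-\tr((g^{-1}k)^3)+(\tr_gk)\,|k|_g^2+\ip{k}{k'}{g}^{-}.
\]
The contribution of $P$ to the last term is computed first. One has $\tr_gP=\alpha\tfrac{n-2}{n-1}\tau-n\alpha\lambda$, and a short computation gives
\[
\ip{k}{P}{g}^{-}=\ip{k}{P}{g}-\tr_gk\,\tr_gP=2\alpha\ip{\rho}{k}{g}-\alpha\,\tr_g(k)\,\tau_\lambda ,
\]
which is exactly $2\alpha\ip{\rho}{k}{g}-\tfrac12\tr_gk\cdot 2\alpha\tau_\lambda$. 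The contribution of $\Gamma^{-}_g(k,k)$, via \eqref{eq:sprayminus}, is
\[
\ip{k}{kg^{-1}k}{g}^{-}-\tfrac12\tr_gk\,\ip{k}{k}{g}^{-}-\tfrac{1}{4(n-1)}\ip{k}{k}{g}^{-}\ip{k}{g}{g}^{-}.
\]
Here $\ip{k}{kg^{-1}k}{g}^{-}=\tr((g^{-1}k)^3)-\tr_gk\,|k|^2_g$ cancels the two cubic terms coming from $\partial_tg^{-1}$. Moreover $\ip{k}{g}{g}^{-}=\tr_gk-n\tr_gk=-(n-1)\tr_gk$, so the last term becomes $+\tfrac14\tr_gk\,\ip{k}{k}{g}^{-}$. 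The total kinetic contribution is therefore $-\tfrac14\tr_g(k)\ip{k}{k}{g}^{-}=-\tfrac12\tr_gk\cdot\tfrac12\ip{k}{k}{g}^{-}$, as required.

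Adding the three pieces gives \eqref{eq:transport}. The main obstacle is simply bookkeeping of the cubic terms and the trace coefficients, where the DeWitt value $c^-$ is essential. The delicate points are the identity $\ip{k}{g}{g}^{-}=-(n-1)\tr_gk$, which makes the $\tfrac{1}{4(n-1)}$ coefficient collapse, and the sign convention $\Delta_g=\delta_gd$, which makes the two Laplacian terms combine into $\delta_g\delta^{-}_g k$. These are the places I would double-check, possibly by testing against the homothetic case $k=\dot c\,g_0$.
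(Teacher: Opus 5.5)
Your proof is correct and takes the same route as the paper: you differentiate $\Ham_\lambda$ pointwise, substitute the explicit evolution equation, use $\partial_t g^{-1}=-g^{-1}kg^{-1}$ and the first variation of $\tau$, and combine the Laplacian and divergence terms into $\delta_g(\delta^-_g k)$ via $\Delta_g=\delta_g d$ and \eqref{eq:deltaminus}. The only difference is bookkeeping. You split $k'$ into the spray part $\Gamma^-_g(k,k)$ and the force part and pair each with $k$ through $\ip{\cdot}{\cdot}{g}^{-}$, whereas the paper traces all of $g^{-1}\dot k$ against $P=g^{-1}k$ and against the identity. Your intermediate identities all check out, including $\ip{k}{P}{g}^{-}=2\alpha\ip{\rho}{k}{g}-\alpha\,\tr_g(k)\,\tau_\lambda$ and the collapse of the spray terms to $-\tfrac14\tr_g(k)\ip{k}{k}{g}^{-}$.
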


\begin{proof}
Write $P:=g^{-1}k$ (a field of endomorphisms), so
$\tr P=\tr_gk$, $\tr P^2=\ip{k}{k}{g}$ and, by
Lemma~\ref{lem:cidentities}\ref{aux1},
$\ip{k}{k}{g}^{-}=\tr P^{2}-(\tr P)^{2}$. From $\partial_tg=k$ we get
$\partial_t(g^{-1})=-g^{-1}kg^{-1}$, hence
\begin{equation}\label{eq:Pdot}
\partial_tP=-P^{2}+g^{-1}\dot k,
\end{equation}
where the dot denotes derivative with respect to $t$ and, 
by \eqref{eq:evolutionexplicit},
\begin{equation}\label{eq:kdot}
g^{-1}\dot k
=P^{2}-\tfrac12(\tr P)\,P
-\tfrac{1}{4(n-1)}\ip{k}{k}{g}^{-}\,\mathrm{Id}
+2\alpha\,g^{-1}\rho-\tfrac{\alpha}{n-1}\,\tau\,\mathrm{Id}
-\alpha\lambda\,\mathrm{Id}.
\end{equation}

Let us consider the two terms in \eqref{eq:Hamdensity} separately; 
first, the kinetic term
$\partial_t\ip{k}{k}{g}^{-}$. Using \eqref{eq:Pdot},
$\partial_t\tr P^{2}=2\tr(P\,\partial_tP)=-2\tr P^{3}+2\tr(Pg^{-1}\dot k)$
and $\partial_t(\tr P)^{2}=2(\tr P)\bigl(-\tr P^{2}+\tr(g^{-1}\dot k)\bigr)$.
Taking traces of \eqref{eq:kdot} (against $P$ and plainly, respectively), with
$\tr(Pg^{-1}\rho)=\ip{\rho}{k}{g}$ and $\tr(g^{-1}\rho)=\tau$, we get
\begin{align*}
2\tr\bigl(Pg^{-1}\dot k\bigr)
=& 2\tr P^{3}-(\tr P)\tr P^{2}
-\tfrac{1}{2(n-1)}\ip{k}{k}{g}^{-}\tr P\\
& +4\alpha\ip{\rho}{k}{g}
-\tfrac{2\alpha}{n-1}\,\tau\,\tr P-2\alpha\lambda\,\tr P,
\end{align*}
and
\begin{align*}
\tr\bigl(g^{-1}\dot k\bigr)
&=\tr P^{2}-\tfrac12(\tr P)^{2}
-\tfrac{n}{4(n-1)}\ip{k}{k}{g}^{-}
+\tfrac{n-2}{n-1}\,\alpha\tau-n\alpha\lambda ,
\end{align*}
where we have used $2\alpha\tau-\frac{n\alpha}{n-1}\tau
=\frac{n-2}{n-1}\alpha\tau$. Therefore
\begin{align*}
\partial_t\ip{k}{k}{g}^{-}
&=\partial_t\tr P^{2}-\partial_t(\tr P)^{2} \\[2pt]
&=\Bigl[
   -(\tr P)\tr P^{2}
   -\frac{1}{2(n-1)}
      \ip{k}{k}{g}^{-}\tr P
   +4\alpha\ip{\rho}{k}{g}
   -\frac{2\alpha}{n-1}\tau\tr P
   -2\alpha\lambda\tr P
 \Bigr]
\\
&\quad+\Bigl[
   (\tr P)^{3}
   +\frac{n}{2(n-1)}
      \ip{k}{k}{g}^{-}\tr P
   -\frac{2(n-2)}{n-1}\alpha\tau\tr P
   +2n\alpha\lambda\tr P
 \Bigr]
\\
&=-(\tr P)\bigl[\tr P^{2}-(\tr P)^{2}\bigr]
  +\frac{n-1}{2(n-1)}
     \ip{k}{k}{g}^{-}\tr P
  +4\alpha\ip{\rho}{k}{g}
\\[-2pt]
&\qquad
  -2\alpha
     \Bigl[\frac{1+(n-2)}{n-1}\Bigr]\tau\tr P
  +2(n-1)\alpha\lambda\tr P .
\end{align*}
that is (taking into account that $\tr P^{2}-(\tr P)^{2}=\ip{k}{k}{g}^{-}$ and
after simplifying numerical factors),
\begin{equation}\label{eq:kineticdot}
\partial_t\ip{k}{k}{g}^{-}
=-\tfrac12(\tr_gk)\ip{k}{k}{g}^{-}
+4\alpha\ip{\rho(g)}{k}{g}
-2\alpha\,\tau\,\tr_gk
+2(n-1)\alpha\lambda\,\tr_gk .
\end{equation}

Now we consider the potential term in \eqref{eq:Hamdensity}. By \eqref{eq:variations},
\begin{equation}\label{eq:taudot}
\partial_t\tau_\lambda(g)=\partial_t\tau(g)=D\tau(g)(k)
=\Delta_g(\tr_gk)+\delta_g\delta_gk-\ip{\rho(g)}{k}{g},
\end{equation}
where $\delta_g\delta_gk$ denotes $\delta_g(\delta_gk)$, the divergence of
the one-form $\delta_gk$.

We can assemble the previous computations. Multipliying $1/2$ by
\eqref{eq:kineticdot}, and adding the result to twice \eqref{eq:taudot}, we get:
\[
\partial_t\Ham_\lambda
=-\tfrac14(\tr_gk)\ip{k}{k}{g}^{-}
-\alpha\bigl(\tau-(n-1)\lambda\bigr)\tr_gk
+2\alpha\bigl[\Delta_g(\tr_gk)+\delta_g\delta_gk\bigr],
\]
the curvature terms $\pm2\alpha\ip{\rho}{k}{g}$ having cancelled. Since
$\tau-(n-1)\lambda=\tau_\lambda$, the first two terms equal
$-\tfrac12(\tr_gk)\bigl[\tfrac12\ip{k}{k}{g}^{-}+2\alpha\tau_\lambda\bigr]
=-\tfrac12(\tr_gk)\,\Ham_\lambda$. Finally, by \eqref{eq:deltaminus} and
$\Delta_g=\delta_gd$ on functions,
\[
\Delta_g(\tr_gk)+\delta_g\delta_gk
=\delta_g\bigl(d(\tr_gk)+\delta_gk\bigr)
=\delta_g\bigl(\delta^{-}_{g}k\bigr),
\]
which proves \eqref{eq:transport}. As for \eqref{eq:transportdensity},
$\partial_t\dv_{g}=\tfrac12\tr_g(k)\dv_g$ by \eqref{eq:variations}, so
$\partial_t(\Ham_\lambda\dv_g)
=(\partial_t\Ham_\lambda)\dv_g+\tfrac12\tr_g(k)\Ham_\lambda\dv_g
=2\alpha\,\delta_g(\delta^{-}_gk)\,\dv_g$.
\end{proof}

\begin{corollary}[Pointwise conservation law]\label{cor:pointwise}
Along every solution of \eqref{eq:evolution} verifying the momentum
constraint $\delta^{-}_{g_t}g'_t=0$ (for instance, by
Theorem~\textup{\ref{thm:momprop}}, any solution verifying it at one
instant), the density
\[
\Ham_\lambda(g_t,g'_t)\,\dv_{g_t}
\]
does not depend on $t$, pointwise on $M$. Equivalently,
\[
\Ham_\lambda(g_t,g'_t)(p)
=\Ham_\lambda(g_{t_0},g'_{t_0})(p)\,
\exp\bigl(-\tfrac12\int_{t_0}^{t}\tr_{g_s}(g'_s)(p)\,ds\bigr)
\]
for every $p\in M$.
\end{corollary}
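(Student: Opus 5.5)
The plan is to read the corollary off Theorem~\ref{thm:transport}, working pointwise at each fixed $p\in M$. Let $g_t$ solve \eqref{eq:evolution} with $\delta^{-}_{g_t}g'_t=0$ for all $t\in I$. If the constraint is only known at a single instant $t_0$, Theorem~\ref{thm:momprop} first extends it to every $t$. Then the right-hand side of \eqref{eq:transportdensity} is $2\alpha\,\delta_{g_t}(\delta^{-}_{g_t}g'_t)\,\dv_{g_t}=2\alpha\,\delta_{g_t}(0)\,\dv_{g_t}=0$ identically on $M$ for every $t$. Note that the constraint must hold as a one-form at every instant, not just in an integrated sense, because the transport identity involves its divergence pointwise. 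This is exactly what Proposition~\ref{prop:momentum}\ref{noe3} delivered inside Theorem~\ref{thm:momprop}.

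Next I would fix a smooth positive reference density $\dv_0$ on $M$, for instance $\dv_{g_{t_0}}$, and write $\Ham_\lambda(g_t,g'_t)\,\dv_{g_t}=h(t,p)\,\dv_0$. Here $h$ is a smooth function on $I\times M$, since $g_\bullet$ is a smooth curve in the Fr\'echet space $\Sym(M)$ and evaluation at points is smooth. Equation \eqref{eq:transportdensity} then says $\partial_t h(t,p)=0$ for each $p$, and since $I$ is an interval, $h(t,p)=h(t_0,p)$. This is the first assertion.

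For the explicit formula, I would use \eqref{eq:transport} directly at a fixed $p$. With the divergence term zero, $u(t):=\Ham_\lambda(g_t,g'_t)(p)$ satisfies the scalar linear ODE $u'(t)=-\tfrac12\,\tr_{g_t}(g'_t)(p)\,u(t)$. Multiplying by the integrating factor $\exp\bigl(\tfrac12\int_{t_0}^t\tr_{g_s}(g'_s)(p)\,ds\bigr)$ shows the product has zero derivative, which yields the stated exponential formula. Equivalently, $\dv_{g_t}=\exp\bigl(\tfrac12\int_{t_0}^t\tr_{g_s}(g'_s)\,ds\bigr)\dv_{g_{t_0}}$ pointwise by integrating $\partial_t\dv_g=\tfrac12\tr_g(k)\dv_g$, and dividing the constant density by this factor gives the same formula.

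I do not expect a genuine obstacle. The only point needing care is the identification of the time derivative of the density with the pointwise derivative of its coefficient $h$, and that follows from smoothness of $g$ on $I\times M$.
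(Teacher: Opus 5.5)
Your proposal is correct and follows the paper's own proof. Like the paper, you set $\delta^{-}_{g}k=0$ in \eqref{eq:transportdensity} to get pointwise time-independence of the density, and you obtain the exponential formula by solving the scalar linear ODE $\partial_t\Ham_\lambda=-\tfrac12\tr_g(k)\,\Ham_\lambda$, which comes from \eqref{eq:transport}, at each fixed $p\in M$.
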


\begin{proof}
Immediate from \eqref{eq:transportdensity} and \eqref{eq:transport} with
$\delta^{-}_gk=0$. The second form is the solution of the linear scalar
ordinary differential equation
$\partial_t\Ham_\lambda=-\tfrac12\tr_g(k)\Ham_\lambda$ at each fixed
$p\in M$.
\end{proof}

\begin{corollary}[Propagation of the Hamiltonian constraint]
\label{cor:hamprop}
Let $g_t$ solve \eqref{eq:evolution} with
$\delta^{-}_{g_{t_0}}g'_{t_0}=0$ and $\Ham_\lambda(g_{t_0},g'_{t_0})=0$
for some $t_0\in I$. Then both constraints \eqref{eq:constraints} hold for
all $t\in I$, hence, by Theorem~\textup{\ref{thm:GMtheorem}}, the metric
$\widetilde g$ built from $g_t$ is Einstein with constant $\lambda$ on all
of $I\times M$.
\end{corollary}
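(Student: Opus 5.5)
The argument assembles results already established. Let $g_t$ solve \eqref{eq:evolution} with both constraints holding at $t_0$.

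\emph{Momentum constraint.} By Proposition~\ref{prop:chargeident}\ref{var1}, $g_t$ is a critical curve of $\mathcal{L}_{\alpha,\lambda}$. Since $\delta^{-}_{g_{t_0}}g'_{t_0}=0$, Theorem~\ref{thm:momprop} gives $\delta^{-}_{g_t}g'_t=0$ for every $t\in I$. This is the only step where the Noether machinery enters, and it is already done.

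\emph{Hamiltonian constraint.} With the momentum constraint valid on all of $I$, apply Corollary~\ref{cor:pointwise}. At each fixed $p\in M$,
\[
\Ham_\lambda(g_t,g'_t)(p)=\Ham_\lambda(g_{t_0},g'_{t_0})(p)\,\exp\bigl(-\tfrac12\int_{t_0}^{t}\tr_{g_s}(g'_s)(p)\,ds\bigr).
\]
Since $\Ham_\lambda(g_{t_0},g'_{t_0})=0$, the right-hand side vanishes, so $\Ham_\lambda(g_t,g'_t)=0$ for all $t$. The same conclusion can be read off from \eqref{eq:transportdensity}: the density $\Ham_\lambda\,\dv_{g_t}$ is constant in $t$ at each point, it vanishes at $t_0$, and $\dv_{g_t}$ is a positive density.

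The one point to check is that this pointwise ODE argument is legitimate. For fixed $p$, the map $t\mapsto\Ham_\lambda(g_t,g'_t)(p)$ is smooth, because $g_t$ is a smooth curve in $\Sym(M)$ and evaluation at $p$ together with the curvature operators are smooth. It satisfies the linear scalar equation $\partial_t\Ham_\lambda=-\tfrac12\tr_g(k)\Ham_\lambda$ with continuous coefficient on $I$. Hence uniqueness for linear scalar ODEs, or equivalently the explicit integrating factor, applies on the whole interval $I$, for $t$ both before and after $t_0$. No PDE uniqueness is needed.

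\emph{Conclusion.} The curve now satisfies \eqref{eq:evolution} and both constraints \eqref{eq:constraints} for every $t\in I$. The converse direction of Theorem~\ref{thm:GMtheorem} then shows that $\rho(\widetilde g)=\lambda\widetilde g$ on all of $I\times M$. Because every step is quoted from earlier results, I expect no genuine obstacle beyond checking that the hypotheses of those results hold on all of $I$, as verified above.
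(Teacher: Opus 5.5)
Your proposal is correct and matches the paper's proof: Theorem~\ref{thm:momprop} propagates the momentum constraint, Corollary~\ref{cor:pointwise} then propagates the pointwise vanishing of $\Ham_\lambda$, and Theorem~\ref{thm:GMtheorem} gives the Einstein condition. Your extra check that the fixed-$p$ linear scalar ODE has a smooth solution and a continuous coefficient on all of $I$ only makes explicit what the paper leaves implicit.
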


\begin{proof}
Theorem~\ref{thm:momprop} propagates the momentum constraint, and
Corollary~\ref{cor:pointwise} then propagates the vanishing of
$\Ham_\lambda$.
\end{proof}

\begin{corollary}[Total energy]\label{cor:totalenergy}
Along every solution of \eqref{eq:evolution} the total energy
$\Ee(t)=\int_M\Ham_\lambda(g_t,g'_t)\dv_{g_t}$ is constant, with or
without the momentum constraint.
\end{corollary}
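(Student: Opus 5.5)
There are two independent routes to the constancy of $\Ee(t)$, and I would present both: the first is immediate from the abstract theory, the second is a cross-check that also recovers the Noether statement from the pointwise identity.

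\emph{First route (Noether).} By Proposition~\ref{prop:chargeident}\ref{var1}, a solution of \eqref{eq:evolution} is a critical curve of $\mathcal{L}_{\alpha,\lambda}$. Theorem~\ref{thm:energy} then says that $t\mapsto\Ee_{\mathcal{L}_{\alpha,\lambda}}(g_t,g'_t)$ is constant. By Proposition~\ref{prop:chargeident}\ref{var2}, this energy equals $\int_M\Ham_\lambda(g_t,g'_t)\,\dv_{g_t}=\Ee(t)$. Neither step uses the momentum constraint, so the conclusion holds with or without it.

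\emph{Second route (integrating the transport identity).} Integrate \eqref{eq:transportdensity} over $M$. Since $M$ is compact and $(t,p)\mapsto\Ham_\lambda(g_t,g'_t)(p)$ is smooth on $I\times M$, differentiation under the integral sign is legitimate. Passing to a fixed reference density, the density $\dv_{g_t}$ becomes a smooth positive function times that density, so this is ordinary finite-dimensional calculus in $(t,p)$. This gives
\[
\Ee'(t)=2\alpha\int_M\delta_{g_t}\bigl(\delta^{-}_{g_t}g'_t\bigr)\,\dv_{g_t}.
\]
The right-hand side vanishes by the second adjointness formula in \eqref{eq:adjointness} with $f\equiv1$:
\[
\int_M(\delta_g\omega)\,\dv_g=\int_M\ip{\omega}{d1}{g}\,\dv_g=0
\]
for every one-form $\omega$, here $\omega=\delta^{-}_{g_t}g'_t$. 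Equivalently, $\delta_g\omega\,\dv_g$ is the divergence of $-\omega^\sharp$ and integrates to zero on the closed manifold $M$. Again the momentum constraint is never invoked.

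Neither route has a real obstacle. The only points needing a word of care are the justification of differentiation under the integral in the second route, handled by compactness and smoothness as above, and, in the first route, the fact that Theorem~\ref{thm:energy} applies to solutions of \eqref{eq:evolution}, which Proposition~\ref{prop:chargeident}\ref{var1} already guarantees.
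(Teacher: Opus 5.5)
Your proposal is correct and matches the paper's proof, which offers exactly these two alternatives: quoting Theorem~\ref{thm:energy} together with Proposition~\ref{prop:chargeident}\ref{var2}, or integrating \eqref{eq:transportdensity} over the closed manifold $M$ and using $\int_M\delta_g(\omega)\,\dv_g=0$. Your extra justifications fill in details the paper leaves implicit: criticality via Proposition~\ref{prop:chargeident}\ref{var1}, differentiation under the integral sign, and the adjointness formula with $f\equiv1$.
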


\begin{proof}
One may either quote Theorem~\ref{thm:energy} and
Proposition~\ref{prop:chargeident}\ref{var2}, or integrate
\eqref{eq:transportdensity} over the closed manifold $M$ and use
$\int_M\delta_g(\omega)\,\dv_g=0$.
\end{proof}

\begin{remark}\label{rem:transportremarks}
(a) In the case $\alpha=-1$, $\lambda=0$, and $n=3$, the scalar form of \eqref{eq:transport} is \[ \partial_t\Ham +\frac12\tr_g(k)\Ham +2\,\delta_g(\delta_g^-k)=0. \] Up to the translation between their momentum-density variable $\pi$ and our $\delta_g^-k$, this is the Lagrangian counterpart of the continuity equation of Fischer and Marsden \cite[Prop.~3.4, eq.~(3.10)]{FM72b}. Since \[ \partial_t\dv_g = \frac12\tr_g(k)\dv_g, \] the preceding scalar equation is equivalently written as the density identity \[ \partial_t(\Ham\dv_g) +2\,\delta_g(\delta_g^-k)\dv_g=0, \] which is precisely \eqref{eq:transportdensity} for $\alpha=-1$. Thus the term $\frac12\tr_g(k)\Ham$ in the scalar equation is absorbed by differentiating the evolving volume density. 
The relations between Noether momentum map and the momentum
constraint on the one hand, and between transport ODE and the Hamiltonian constraint
on the other, avoid the appeal to uniqueness for first order linear systems of partial 
differential equations which is implicit in \cite[Prop.~3.5]{FM72b}. (b)
Corollary~\ref{cor:pointwise} is strictly stronger than the conservation of
the Noether energy, because the conserved object is a \emph{density on $M$},
so there is one conservation law for each point. The naive mechanical analogy
only predicts the conservation of the integral. (c) Nothing in
Theorems~\ref{thm:momprop}--\ref{thm:transport} requires $\lambda=0$,
Lorentzian signature, or $n=3$. In particular, the ``Riemannian
geometrodynamics'' of \cite{GM96} has the same conservation laws,
as illustrated in Section~\ref{sec:examples}. (d) The existence of solutions
of \eqref{eq:evolution} for given initial data $(g_0,k_0)$ is not
addressed here, and the results above are identities for smooth
solutions. Let us note that the displayed equation is \emph{not} an ordinary
differential equation on a Banach manifold of $C^{k}-$metrics. Its right hand
side contains $\rho(g)$, which involves two spatial derivatives of $g$, so
$g\mapsto\rho(g)$ maps $C^{k}\to C^{k-2}$ and the Picard--Lindel\"of theory
does not apply; besides, the Einstein equations carry a diffeomorphism gauge
freedom. The local existence of Einstein developments for constrained initial
data is furnished instead by the standard gauge-fixed hyperbolic Cauchy
theory \cite{FM72a,CB09,Rin09}, to which we refer for the well-posedness
that our formal identities presuppose.
\end{remark}

\section{A Maupertuis--Jacobi theorem and the geodesic form of gravity}
\label{sec:jacobi}
Let us recall that the classical Maupertuis--Jacobi principle states that, at fixed energy $e$, the unparametrized trajectories of a mechanical system are geodesics of the conformally rescaled Jacobi metric $2(e-V)G$ on the region where the conformal factor does not vanish. DeWitt interpreted Einstein evolution as motion in the space of spatial geometries and expressed it as a geodesic equation for the DeWitt metric modified by a curvature-dependent force term \cite[\S 6, especially Eq.~(6.23)]{DeW67}. Here we show that, for the constant-lapse DeWitt mechanical system, this force term can instead be absorbed into a conformal rescaling of the metric and an explicit reparametrization. We first establish the corresponding Maupertuis--Jacobi theorem for weak pseudo-Riemannian metrics on Fr\'echet manifolds. The only subtlety
which is absent from the finite dimensional theory is that we must check first
that a \emph{weak} metric maintains that character after a conformal rescaling.

\subsection{Conformal rescaling of a weak metric}\label{ssec:conformal}

\begin{lemma}\label{lem:conformal}
Let $(G,\Gamma)$ be a weak pseudo-Riemannian structure with Levi-Civita
connection on $\Nn\subset E$ (Definition~\textup{\ref{def:mechanical}}),
and let $f\in C^{\infty}(\Nn)$ be nowhere zero and admit a $G-$gradient
$\vgrad f$. Then $\widetilde G:=fG$ is again weakly non degenerate and
admits the Levi-Civita connection
\begin{equation}\label{eq:conformalconn}
\widetilde\Gamma_g(h,k)
=\Gamma_g(h,k)
-\frac{1}{2f(g)}\Bigl[df(g)(h)\,k+df(g)(k)\,h-G_g(h,k)\,\vgrad f(g)\Bigr].
\end{equation}
\end{lemma}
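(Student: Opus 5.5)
The plan is to verify the three ingredients of Definition~\ref{def:mechanical} directly for the pair $(\widetilde G,\widetilde\Gamma)$. Uniqueness then comes for free from Lemma~\ref{lem:LCunique}.

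First, the easy properties. Smoothness of $\widetilde G\colon\Nn\to L^{2}_{\mathrm{sym}}(E;\R)$ is clear, since it is the product of the smooth scalar $f$ and the smooth $G$. Weak non degeneracy holds because $f(g)\neq0$: if $f(g)G_g(h,k)=0$ for all $k$, then $G_g(h,k)=0$ for all $k$, so $h=0$. Next I check that $\widetilde\Gamma$ given by \eqref{eq:conformalconn} is a smooth map into $L^{2}_{\mathrm{sym}}(E;E)$. Each summand is built from smooth maps by continuous multilinear operations: $1/f$, $df\colon\Nn\to E'$, $G$, and $\vgrad f\colon\Nn\to E$. Here $df$ is smooth as the derivative of a smooth map, and $\vgrad f$ is smooth by hypothesis. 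Symmetry in $(h,k)$ is evident from the formula.

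The core step is the metricity identity \eqref{eq:metricity} for $\widetilde G$. By the product rule,
\[
D\widetilde G(g)(l)(h,k)=df(g)(l)\,G_g(h,k)+f(g)\,DG(g)(l)(h,k).
\]
I will substitute $\widetilde\Gamma=\Gamma-B$, where $B$ denotes the bracket term in \eqref{eq:conformalconn}. The $\Gamma$-part is then handled by \eqref{eq:metricity} for $G$ multiplied by $f(g)$; it cancels $f\,DG(g)(l)(h,k)$. What remains to show is
\[
df(l)\,G(h,k)-f\,G(B(l,h),k)-f\,G(h,B(l,k))=0.
\]
Expanding, $f\,G(B(l,h),k)$ equals
\[
\tfrac12\bigl[df(l)G(h,k)+df(h)G(l,k)-G(l,h)\,G(\vgrad f,k)\bigr],
\]
and $G(\vgrad f,k)=df(k)$ by definition of the gradient. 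Symmetrically, $f\,G(h,B(l,k))$ equals
\[
\tfrac12\bigl[df(l)G(h,k)+df(k)G(l,h)-G(l,k)\,df(h)\bigr].
\]
Adding the two expressions, the cross terms $df(h)G(l,k)$ and $df(k)G(l,h)$ cancel, leaving exactly $df(l)G(h,k)$, as required.

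I expect no real obstacle; the only delicate point is infinite-dimensional. The term $G_g(h,k)\vgrad f(g)$ must be an honest vector in $E$. This is exactly why the hypothesis that $f$ admits a $G$-gradient is needed: a weak metric does not allow raising the index of $df$ in general. Once this is granted, the whole computation is algebraic and pointwise in $g$, and the proof is complete.
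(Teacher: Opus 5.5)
Your proposal is correct and follows essentially the same route as the paper: weak non degeneracy from $f\neq0$, reduction via Lemma~\ref{lem:LCunique} to the metricity identity, the product rule for $D\widetilde G$, and cancellation of the $\Gamma$-terms by metricity of $(G,\Gamma)$. The remaining bracket computation is the same, with the cross terms cancelling to leave exactly $df(l)\,G(h,k)$, and your remarks on the smoothness of $\widetilde\Gamma$ and on why the $G$-gradient of $f$ is needed simply make explicit what the paper leaves implicit.
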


\begin{proof}
The weak non degeneracy of $\widetilde G=fG$ is clear, as $f$ is nowhere
zero. The map $\widetilde\Gamma$ defined by \eqref{eq:conformalconn} is
smooth and symmetric so, by Lemma~\ref{lem:LCunique}, it suffices to verify
the metricity identity \eqref{eq:metricity} for $(\widetilde G,
\widetilde\Gamma)$. Since $D\widetilde G(g)(l)(h,k)
=df(g)(l)\,G_g(h,k)+f(g)\,DG(g)(l)(h,k)$ and, using
\eqref{eq:metricity} for $(G,\Gamma)$,
\[
f\,DG(l)(h,k)
=-f\,G(\Gamma(l,h),k)-f\,G(h,\Gamma(l,k)),
\]
what we must show is that
\[
df(l)\,G(h,k)
-f\,G(\Gamma(l,h),k)-f\,G(h,\Gamma(l,k))
+\widetilde G(\widetilde\Gamma(l,h),k)
+\widetilde G(h,\widetilde\Gamma(l,k))=0 .
\]
Now $\widetilde G(\widetilde\Gamma(l,h),k)=f\,G(\Gamma(l,h),k)
-\tfrac12[df(l)G(h,k)+df(h)G(l,k)-G(l,h)\,df(k)]$, using
$G(\vgrad f,k)=df(k)$, and symmetrically for the last term. Adding, the
$f\,G(\Gamma,\cdot)$ terms cancel and the bracket contributions sum to
\begin{align*}
&-\tfrac12\bigl[df(l)G(h,k)+df(h)G(l,k)-df(k)G(l,h)\bigr]\\
&-\tfrac12\bigl[df(l)G(h,k)+df(k)G(l,h)-df(h)G(l,k)\bigr]\\
&= -df(l)\,G(h,k),
\end{align*}
which cancels the leading $df(l)G(h,k)$. Hence \eqref{eq:metricity} holds
for $(\widetilde G,\widetilde\Gamma)$.
\end{proof}

\subsection{The Maupertuis--Jacobi theorem}\label{ssec:jacobithm}

\begin{theorem}
\label{thm:jacobi}
Let $\mathcal{L}=\tfrac12G(v,v)-V\circ\pi$ be a mechanical Lagrangian on
$\Nn$ (Definition~\textup{\ref{def:mechanical}}) with $V$ admitting a
$G-$gradient, let $e\in\R$, and put
\[
\Nn_e:=\{g\in\Nn: V(g)\neq e\},\qquad
\widetilde G:=2(e-V)\,G \ \text{ on }\ \Nn_e,
\]
a weak pseudo-Riemannian metric with Levi-Civita connection
$\widetilde\Gamma$ given by Lemma~\textup{\ref{lem:conformal}}
(with $f=2(e-V)$, $\vgrad f=-2\vgrad V$).
\begin{enumerate}[label=\textit{\alph*)}]
\item\label{jac1} Let $c\colon I\to\Nn_e$ be an extremal of $\mathcal{L}$ with 
constant energy $\Ee_{\mathcal{L}}(c,c')\equiv e$. Reparametrize by
$s(t)=\int_{t_0}^{t}2(e-V(c(r)))\,dr$, that is, $ds/dt=2(e-V(c))=f(c)$, and
write $\gamma(s)=c(t(s))$. Then $\gamma$ is a geodesic of $\widetilde G$,
of constant $\widetilde G-$speed $\widetilde G(\gamma',\gamma')\equiv1$
(where $'=d/ds$).
\item\label{jac2} Conversely, let $\gamma\colon J\to\Nn_e$ be a geodesic of
$\widetilde G$ with $\widetilde G(\gamma',\gamma')\equiv1$. Reparametrize
by $dt/ds=1/f(\gamma)=1/2(e-V(\gamma))$ and put $c(t)=\gamma(s(t))$. Then
$c$ is an extremal of $\mathcal{L}$ with energy $e$.
\end{enumerate}
\end{theorem}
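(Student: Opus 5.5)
The plan is to carry out the classical computation directly in the global chart, using the explicit connection \eqref{eq:conformalconn} of Lemma~\ref{lem:conformal} with $f=2(e-V)$, $df=-2\,dV$ and $\vgrad f=-2\vgrad V$. Everything reduces to comparing the covariant accelerations $\widetilde\nabla_{\partial_s}\gamma'$ and $\nabla_{\partial_t}c'$ under a reparametrization with $ds/dt=f(c)$. No infinite dimensional analysis is needed beyond the chain rule along smooth curves; the weak character of $G$ only enters through Lemma~\ref{lem:conformal}, through weak non degeneracy, and through the fact that $\vgrad V$ exists as a genuine vector.

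For part \ref{jac1}, let $c$ be an extremal with energy $e$, so that $\tfrac12G(c',c')=e-V(c)=f(c)/2$. Note that $s(t)$ is a diffeomorphism onto its image because $f(c)\neq0$ on $\Nn_e$ (it is strictly monotone). With $\gamma(s)=c(t(s))$ we have $\gamma'=c'/f$, hence
\[
\widetilde G(\gamma',\gamma')=f\,G(c',c')/f^{2}=G(c',c')/f=1 .
\]
Next we compute
\[
\gamma''=\frac{1}{f^{2}}c''-\frac{df(c')}{f^{3}}\,c'
\]
and substitute $c''=\Gamma(c',c')-\vgrad V(c)$ from Theorem~\ref{thm:mechEL}. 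From \eqref{eq:conformalconn},
\[
\widetilde\Gamma(\gamma',\gamma')=\frac{1}{f^{2}}\Bigl[\Gamma(c',c')-\frac{1}{f}df(c')\,c'+\frac{1}{2f}G(c',c')\vgrad f\Bigr].
\]
The energy relation gives $G(c',c')/(2f)=1/2$ and $\vgrad f=-2\vgrad V$, so the last term equals $-\vgrad V$. Hence $\gamma''=\widetilde\Gamma(\gamma',\gamma')$, which is the geodesic equation for $\widetilde G$.

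For part \ref{jac2}, I would reverse the computation. Given the geodesic $\gamma$ with $\widetilde G(\gamma',\gamma')=1$, define $t(s)$ by $dt/ds=1/f(\gamma)$ and set $c=\gamma\circ s$, so that $c'=f\gamma'$. Then
\[
G(c',c')=f^{2}G(\gamma',\gamma')=f\,\widetilde G(\gamma',\gamma')=f,
\]
which gives $\Ee_{\mathcal L}(c,c')=\tfrac12 f+V=e$. With this normalization in hand, the same algebra as in part \ref{jac1}, read backwards, turns $\gamma''=\widetilde\Gamma(\gamma',\gamma')$ into $c''=\Gamma(c',c')-\vgrad V(c)$. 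By Theorem~\ref{thm:mechEL}, $c$ is then an extremal of $\mathcal L$.

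The main obstacle is bookkeeping rather than conceptual. The term $G(h,k)\vgrad f$ in $\widetilde\Gamma$ reproduces the force $-\vgrad V$ only after the energy (equivalently unit-speed) normalization is used, so the order of substitutions matters. One must also handle the sign of $f$ carefully, since $G$ is indefinite and $f$ may be negative; the factor $f$ in $ds/dt$ makes $\widetilde G(\gamma',\gamma')=+1$ regardless of that sign. Finally, smoothness of the reparametrizations follows from $f\circ c$ being smooth and nowhere zero.
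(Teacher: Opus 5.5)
Your proof is correct and follows essentially the paper's route. You work directly in the global chart with the conformal Christoffel map \eqref{eq:conformalconn}, and the energy (equivalently unit-speed) normalization $G(c',c')=f$ is exactly what turns the term $\tfrac{1}{2f}G(c',c')\vgrad f$ into the force $-\vgrad V$. The paper merely packages your comparison of $\gamma''$ with $\widetilde\Gamma_\gamma(\gamma',\gamma')$ as the curve-independent identity \eqref{eq:jacobikey}. That identity makes your ``read backwards'' step in part (b) explicit: once $G(c',c')=f$ is known, $\gamma''-\widetilde\Gamma_\gamma(\gamma',\gamma')=f^{-2}\bigl(c''-\Gamma_c(c',c')+\vgrad V(c)\bigr)$.
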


\begin{proof}
Write $f:=2(e-V)$. It is nowhere zero on $\Nn_e=\{V\neq e\}$ by definition
and, along the connected curve under consideration (which lies in a
single connected component of $\Nn_e$), it has therefore a fixed sign, although
the sign may differ between components of $\Nn_e$. Also
$\vgrad f=-2\vgrad V$. Denote by $\nabla$ the $G-$Levi-Civita connection
and by $\widetilde\nabla$ the $\widetilde G-$one, whose Christoffel map is
\eqref{eq:conformalconn} with this $f$. In both parts we put
$\gamma(s)=c(t(s))$ with $ds/dt=f(c)$, so that $c'=f\,\gamma'$, that is,
$\gamma'=c'/f$, and $\frac{d}{ds}=\frac1f\frac{d}{dt}$.

We claim that, for \emph{any} smooth curve $c$
in $\Nn_e$ reparametrized as above, the following kinematical identity holds:
\begin{equation}\label{eq:jacobikey}
\widetilde\nabla_{\partial_s}\gamma'
=\frac{1}{f^2}\,\nabla_{\partial_t}c'
+\frac{G(c',c')}{f^3}\,\vgrad V(c) .
\end{equation}
Indeed, writing $\dot f:=\frac{d}{dt}f(c)=df(c)(c')$, the chain rule gives
$\gamma''=\frac{d}{ds}(c'/f)=\frac1f\frac{d}{dt}(c'/f)
=\frac{c''}{f^2}-\frac{\dot f}{f^3}c'$. On the other hand, by
\eqref{eq:conformalconn} with $h=k=\gamma'$, using the bilinearity
$\Gamma(\gamma',\gamma')=f^{-2}\Gamma(c',c')$, the relation
$df(\gamma')=f^{-1}\dot f$ (so that $f^{-1}df(\gamma')\,\gamma'
=\dot f\,f^{-3}c'$), and $\vgrad f=-2\vgrad V$ together with
$G(\gamma',\gamma')=f^{-2}G(c',c')$,
\[
\widetilde\Gamma_\gamma(\gamma',\gamma')
=\frac{1}{f^2}\Gamma(c',c')-\frac{\dot f}{f^3}c'
-\frac{G(c',c')}{f^3}\vgrad V(c) .
\]
Subtracting, the $\dot f\,f^{-3}c'$ terms cancel and
$\widetilde\nabla_{\partial_s}\gamma'
=\gamma''-\widetilde\Gamma_\gamma(\gamma',\gamma')
=\frac{1}{f^2}\bigl[c''-\Gamma(c',c')\bigr]+\frac{G(c',c')}{f^3}\vgrad V
$, which is \eqref{eq:jacobikey} since $c''-\Gamma(c',c')
=\nabla_{\partial_t}c'$.

\ref{jac1} Let $c$ be an extremal of energy $e$. By
Theorem~\ref{thm:mechEL}, $\nabla_{\partial_t}c'=-\vgrad V(c)$, and the
energy relation $\Ee_{\mathcal L}(c,c')=\tfrac12G(c',c')+V(c)=e$ reads
\begin{equation}\label{eq:energyrel}
G(c',c')=2(e-V(c))=f(c).
\end{equation}
Then $\widetilde G(\gamma',\gamma')=f\,G(\gamma',\gamma')
=f^{-1}G(c',c')=1$ by \eqref{eq:energyrel}, so $\gamma$ has unit
$\widetilde G-$speed. Substituting \eqref{eq:energyrel} and the
equation of motion into \eqref{eq:jacobikey},
\[
\widetilde\nabla_{\partial_s}\gamma'
=\frac{1}{f^2}\bigl(-\vgrad V\bigr)+\frac{f}{f^3}\vgrad V=0,
\]
so $\gamma$ is a unit speed geodesic of $\widetilde G$.

\ref{jac2} Let $\gamma$ be a unit speed $\widetilde G-$geodesic,
$\widetilde\nabla_{\partial_s}\gamma'=0$ and
$\widetilde G(\gamma',\gamma')\equiv1$, and put $c(t)=\gamma(s(t))$ with
$dt/ds=1/f(\gamma)$. The unit speed condition gives
$1=\widetilde G(\gamma',\gamma')=f\,G(\gamma',\gamma')
=f^{-1}G(c',c')$, that is, $G(c',c')=f=2(e-V(c))$, which is exactly the
energy relation $\Ee_{\mathcal L}(c,c')=e$. Inserting
$G(c',c')=f$ into \eqref{eq:jacobikey} and using
$\widetilde\nabla_{\partial_s}\gamma'=0$,
\[
0=\frac{1}{f^2}\nabla_{\partial_t}c'+\frac{f}{f^3}\vgrad V
=\frac{1}{f^2}\bigl[\nabla_{\partial_t}c'+\vgrad V(c)\bigr],
\]
whence $\nabla_{\partial_t}c'=-\vgrad V(c)$ and, by
Theorem~\ref{thm:mechEL}, $c$ is an extremal of energy $e$.
\end{proof}

\begin{remark}\label{rem:jacobiscope}
There are several points that may need clarification. 
(a) Theorem~\ref{thm:jacobi} works
regardless of the sign of $f=2(e-V)$, as only $f\neq0$ is needed, and this is
the reason why the pseudo-Riemannian (indefinite $G$) and the Riemannian cases
can be treated jointly. In the positive definite, finite dimensional
case with $e>V$ this is the textbook Jacobi metric \cite[\S3.7]{Arn89}. (b)
The set $\{V\neq e\}$ may be disconnected, and then the statement is to be
understood on each connected segment of the curve, on which $f$ has a fixed sign.
When $f<0$ the reparametrization $ds/dt=f$ reverses the orientation, so the
correspondence matches a forward-time solution to a backward-parametrized
geodesic and vice versa. (c) ``Unit speed'' means here
$\widetilde G(\gamma',\gamma')=1$ for the (possibly indefinite) conformal
metric $\widetilde G=fG$. When $\widetilde G$ is indefinite this normalizes
$\gamma$ to be spacelike of unit $\widetilde G-$length, and the multiplication
by the negative factor $f<0$ interchanges the causal labels of $G$ and
$\widetilde G$. No positivity of the speed is asserted. (d) The theorem is a
correspondence between pre-existing smooth solutions. It presupposes
the smooth Christoffel map of $G$ (equivalently, of $\widetilde G$ via
Lemma~\ref{lem:conformal}), and it does not establish by itself existence,
uniqueness, completeness, or a well defined geodesic spray in the weak
Fr\'echet setting, which are topics for a more specialized analytic work.
\end{remark}

\subsection{Gravity as geodesic motion}\label{ssec:GRjacobi}
Let us now particularize to the DeWitt Lagrangian
$\mathcal{L}_{\alpha,\lambda}$, where $V=2\alpha S_\lambda$, and to the
physically relevant energy value $e=0$ (the Hamiltonian constraint fixes
the total energy to zero, by Proposition~\ref{prop:chargeident}\ref{var2}--\ref{var3}
and Corollary~\ref{cor:totalenergy}). Then $e-V=-2\alpha S_\lambda$ and
the Jacobi metric of Theorem~\ref{thm:jacobi} is
\[
\widetilde G=2(e-V)\,G^{-}=-4\alpha\,S_\lambda(g)\,G^{-}=:\mathcal{G}.
\]

\begin{theorem}\label{thm:GRjacobi}
Let $g_t$ be a solution of the Einstein evolution system
\eqref{eq:evolution}--\eqref{eq:constraints} (equivalently, by
Theorem~\textup{\ref{thm:GMtheorem}}, the curve of spatial metrics of an
Einstein spacetime $\widetilde g=\alpha dt^2+g_t$ with
$\rho(\widetilde g)=\lambda\widetilde g$), and suppose $S_\lambda(g_t)\neq0$
for all $t$ in some subinterval $I'\subset I$. Then, reparametrizing by
\[
\frac{ds}{dt}=-4\alpha\,S_\lambda(g_t),
\]
the curve $\gamma(s)=g_{t(s)}$ is a unit speed geodesic of the conformal
DeWitt metric $\mathcal{G}=-4\alpha S_\lambda(g)\,G^{-}$ on the open set
$\{S_\lambda\neq0\}\subset\Mm$,
\[
\mathcal{G}(\gamma',\gamma')\equiv1,\qquad
\widetilde\nabla^{\mathcal{G}}_{\partial_s}\gamma'=0 .
\]
For the Lorentzian vacuum case $\alpha=-1$, $\lambda=0$ this says that the
spatial geometry of a vacuum spacetime traces, up to the
reparametrization $ds/dt=4S(g_t)$, a unit speed geodesic of the metric
$4S(g)\,G^{-}$ on $\{S\neq0\}\subset\Mm$.

Conversely, let $\gamma$ be a unit speed $\mathcal{G}-$geodesic lying in
$\{S_\lambda\neq0\}$, and let $c(t)=\gamma(s(t))$ be its inverse
reparametrization (with $dt/ds=1/f$, $f=-4\alpha S_\lambda$), a
zero-total-energy extremal of $\mathcal{L}_{\alpha,\lambda}$; write
$\dot c:=dc/dt$ for its mechanical velocity, which is the $g'$ of the
evolution equation \eqref{eq:evolution}. If in addition $c$ verifies
\emph{both} constraints\footnote{Notice that the
constraints must be imposed on the mechanical velocity $\dot c=dc/dt$, and
not on $\gamma'=d\gamma/ds$, because both differ by the factor $f$ and, while
the momentum constraint is homogeneous in the velocity, the Hamiltonian
constraint is not, as it combines a quadratic kinetic term with a potential
term which does not depend on the velocity.} \eqref{eq:constraints}, that is, the momentum
constraint $\delta^{-}_{g}\dot c=0$ and the pointwise Hamiltonian constraint
$\Ham_\lambda(c,\dot c)=0$, at its initial data $(c(t_0),\dot c(t_0))$
for one value $t_0$, then $c$ is an Einstein evolution.
\end{theorem}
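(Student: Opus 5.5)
The plan is to reduce both directions to Theorem~\ref{thm:jacobi}, applied with $(G,\Gamma)=(G^{-},\Gamma^{-})$, potential $V=2\alpha S_\lambda$ and energy level $e=0$. The needed hypotheses are already in place. By \eqref{eq:gradSlambda}, $V$ admits the $G^{-}$-gradient $2\alpha\,\vgrad^{-}S_\lambda$, which is smooth because $g\mapsto\rho(g)$ and $g\mapsto\tau(g)$ are smooth polynomial differential operators on the global chart. The function $f=2(e-V)=-4\alpha S_\lambda$ is smooth, nowhere zero on $\Nn_0=\{S_\lambda\neq0\}$, and has $G^{-}$-gradient $-4\alpha\,\vgrad^{-}S_\lambda$. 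Lemma~\ref{lem:conformal} then gives $\mathcal{G}=fG^{-}$ its Levi-Civita connection. Finally, $\{S_\lambda\neq 0\}$ is open in $\Mm$, because $S_\lambda$ is continuous.

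For the direct statement, let $g_t$ solve \eqref{eq:evolution}--\eqref{eq:constraints}. Proposition~\ref{prop:chargeident}\ref{var1} says $g_t$ is an extremal of $\mathcal{L}_{\alpha,\lambda}$. Integrating the pointwise constraint $\Ham_\lambda(g_t,g_t')=0$ over $M$ and using Proposition~\ref{prop:chargeident}\ref{var2}, we get $\Ee_{\mathcal L_{\alpha,\lambda}}(g_t,g'_t)=0$ for every $t$. In particular the energy equals $e=0$, consistent with Corollary~\ref{cor:totalenergy}. On the subinterval $I'$ the curve lies in $\Nn_0$, so Theorem~\ref{thm:jacobi}\ref{jac1} applies with $ds/dt=f(g_t)=-4\alpha S_\lambda(g_t)$ and yields $\mathcal G(\gamma',\gamma')\equiv1$ and $\widetilde\nabla^{\mathcal G}_{\partial_s}\gamma'=0$. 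Setting $\alpha=-1$, $\lambda=0$ gives $f=4S$, which is the vacuum special case.

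For the converse, start from a unit speed $\mathcal G$-geodesic $\gamma$ in $\{S_\lambda\neq0\}$ and reparametrize by $dt/ds=1/f$. Theorem~\ref{thm:jacobi}\ref{jac2} shows that $c$ is an extremal of $\mathcal L_{\alpha,\lambda}$ with energy $0$, so by Proposition~\ref{prop:chargeident}\ref{var1} it solves \eqref{eq:evolution}. Now assume both constraints hold for $(c(t_0),\dot c(t_0))$. Corollary~\ref{cor:hamprop}, which rests on Theorem~\ref{thm:momprop} and Corollary~\ref{cor:pointwise}, propagates them to all $t$. Theorem~\ref{thm:GMtheorem} then shows that $\alpha\,dt^2+c(t)$ is Einstein with constant $\lambda$.

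The computations here are routine. The main point needing care is the bookkeeping of parametrizations in the converse. The constraints have to be checked on $\dot c=f\gamma'$ rather than on $\gamma'$, and propagation must be applied to the $t$-parametrized extremal, which is the object the transport identity of Theorem~\ref{thm:transport} is stated for. A second point is sign and orientation: when $f<0$ the reparametrization reverses orientation, as in Remark~\ref{rem:jacobiscope}(b). This affects neither the geodesic property nor the unit-speed normalization, because those are unchanged under $s\mapsto -s$. Beyond that, the argument only needs the connected curve to lie in one component of $\{S_\lambda\neq0\}$, and this holds automatically by continuity of $S_\lambda$ along the curve.
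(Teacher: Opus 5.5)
Your proposal is correct and follows essentially the same route as the paper. For the direct part you use Proposition~\ref{prop:chargeident}, integrate the Hamiltonian constraint to get $e=0$, and apply Theorem~\ref{thm:jacobi}\ref{jac1} with $f=-4\alpha S_\lambda$; for the converse you use Theorem~\ref{thm:jacobi}\ref{jac2} and then propagate both constraints along the $t$-parametrized curve via Theorem~\ref{thm:momprop}, Corollary~\ref{cor:pointwise} and Corollary~\ref{cor:hamprop}, with the extra checks you add (smoothness of the gradient, openness of $\{S_\lambda\neq0\}$, fixed sign of $f$) being details the paper leaves implicit.
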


\begin{proof}
By Proposition~\ref{prop:chargeident}\ref{var1} the solution is an extremal of
$\mathcal{L}_{\alpha,\lambda}$ and, by the Hamiltonian constraint and
Proposition~\ref{prop:chargeident}\ref{var2}, its energy is
$\Ee=\int_M\Ham_\lambda\dv_g=0$. Theorem~\ref{thm:jacobi} with $e=0$,
$V=2\alpha S_\lambda$, $f=-4\alpha S_\lambda$ (nowhere zero on $I'$ by
hypothesis) gives the geodesic statement and $\mathcal{G}(\gamma',\gamma')
\equiv1$, the reparametrization being $ds/dt=f=-4\alpha S_\lambda$.

For the converse, the second half of Theorem~\ref{thm:jacobi} (with
$e=0$) already gives that the inverse reparametrization $c(t)=\gamma(s(t))$
is an extremal of $\mathcal{L}_{\alpha,\lambda}$ with total energy $\Ee=0$,
that is, a solution of the evolution equation \eqref{eq:evolution} with
$\int_M\Ham_\lambda(c,\dot c)\dv=0$. This mechanical
correspondence, however, does not force by itself the pointwise
Hamiltonian constraint, since a time independent density
$\Ham_\lambda\dv_g$ with vanishing integral need not vanish pointwise.
Suppose now that both constraints \eqref{eq:constraints} hold at the initial
data $(c(t_0),\dot c(t_0))$, where $t_0$ is the mechanical instant
corresponding to the chosen geodesic parameter. All the propagation
results are stated for the critical curve in the mechanical parameter
$t$, so we apply them to $c(t)$. Theorem~\ref{thm:momprop} propagates
$\delta^{-}_{g}\dot c=0$ for all $t$, and Corollary~\ref{cor:pointwise}
then makes $\Ham_\lambda\dv_g$ pointwise constant in $t$; as it
vanishes pointwise at $t_0$, it vanishes pointwise for all $t$. Both
constraints of \eqref{eq:constraints} therefore hold along $c$ throughout,
and Corollary~\ref{cor:hamprop} identifies $c$ with an Einstein evolution.
The conclusion applies to $\gamma$ under the reparametrization if
desired, both curves having the same image.
\end{proof}

\begin{remark}\label{rem:degeneracy}
The hypothesis $S_\lambda\neq0$ is really needed. Along a constrained solution, 
the Hamiltonian constraint
$\Ham_\lambda=\tfrac12\ip{k}{k}{g}^{-}+2\alpha\tau_\lambda=0$ integrates
to $\tfrac12 G^{-}(k,k)+2\alpha S_\lambda=0$, so
\[
S_\lambda(g)=-\frac{1}{4\alpha}\,G^{-}(g',g') .
\]
Thus $S_\lambda(g_{t})=0$ if and only if the integrated DeWitt
norm $G^{-}(g',g')$ vanishes, and this can happen in two ways. A
\emph{moment of time symmetry} $g'=k=0$ (for instance the neck of a
recollapsing cosmology, or the turning point of the closed models below) is
one of them. There $S_\lambda=0$, the Jacobi conformal factor
$-4\alpha S_\lambda$ vanishes, $\mathcal{G}$ degenerates and the
reparametrization $ds/dt\to0$ becomes singular. But because $G^{-}$ is
indefinite, the integral
$G^{-}(g',g')=\int_M\ip{g'}{g'}{g}^{-}\dv_g$ can also vanish by
cancellation with $g'\neq0$, so time symmetry is a sufficient, but not a
necessary, source of degeneracy. This is the coordinate free
counterpart of the familiar fact from the finite dimensional
Maupertuis--Jacobi principle \cite[\S3.7]{Arn89} that the Jacobi metric
$2(e-V)G$ degenerates at the boundary of the Hill region $\{V=e\}$, which here
is the locus $\{S_\lambda=0\}$. It is also the reason why we stated
Theorem~\ref{thm:GRjacobi} on the open set $\{S_\lambda\neq0\}$, as on each
connected segment where $S_\lambda\neq0$ the trajectory is reparametrized
as a $\mathcal{G}-$geodesic.
\end{remark}

\section{Homothetic solutions. Milne, de Sitter and the round sphere}
\label{sec:examples}
Let us illustrate the preceding results on the simplest nontrivial curves of metrics,
the homotheties of a fixed Einstein metric. Besides being explicit, these examples show
that all three ingredients (evolution equation, momentum constraint,
Hamiltonian constraint) collapse to elementary scalar ordinary differential
equations, so they can be explicitly calculated, and they exhibit both signatures.

\begin{proposition}[Homothetic reduction]\label{prop:homothetic}
Let $g_0\in\Mm$ be Einstein, $\rho(g_0)=\tfrac{\tau_0}{n}g_0$ with
$\tau_0=\tau(g_0)$ constant, and consider $g_t=c(t)\,g_0$ with
$c(t)>0$. Then:
\begin{enumerate}[label=\textit{\alph*)}]
\item\label{homo1} the momentum constraint $\delta^{-}_{g_t}g'_t=0$ holds
automatically;
\item\label{homo2} the evolution equation \eqref{eq:evolution} reduces to the scalar
second order equation
\begin{equation}\label{eq:creduced}
c''=\frac{4-n}{4}\,\frac{(c')^{2}}{c}
+\frac{\alpha(n-2)\,\tau_0}{n(n-1)}-\alpha\lambda\,c ;
\end{equation}
\item\label{homo3} the Hamiltonian constraint $\Ham_\lambda(g_t,g'_t)=0$ reduces to the
first order relation
\begin{equation}\label{eq:cconstraint}
(c')^{2}=A\,c-B\,c^{2},\mbox{ where }
A:=\frac{4\alpha\,\tau_0}{n(n-1)},\mbox{ and } B:=\frac{4\alpha\lambda}{n};
\end{equation}
\item\label{homo4} on the constraint set \eqref{eq:cconstraint}, the evolution
\eqref{eq:creduced} is equivalent to the \emph{linear} equation
\begin{equation}\label{eq:clinear}
c''=\tfrac12 A-B\,c .
\end{equation}
\end{enumerate}
\end{proposition}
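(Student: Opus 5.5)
The whole proof is a direct substitution of $g_t=c(t)g_0$, $k=g'_t=c'g_0$, $g''_t=c''g_0$ into the explicit formulas of Section~\ref{sec:einstein}, using scaling laws for the curvature of a homothetic metric. I will first record the elementary identities. Since the Levi-Civita connection is invariant under constant rescaling at each fixed $t$, we have $\rho(c g_0)=\rho(g_0)=\tfrac{\tau_0}{n}g_0$ and $\tau(cg_0)=\tau_0/c$. We also have $g^{-1}k=\tfrac{c'}{c}\mathrm{Id}$, hence $\tr_g k=n\tfrac{c'}{c}$ and $\ip{k}{k}{g}=n\tfrac{(c')^2}{c^2}$. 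By Lemma~\ref{lem:cidentities}\ref{aux1},
\[
\ip{k}{k}{g}^{-}=n\tfrac{(c')^2}{c^2}-n^2\tfrac{(c')^2}{c^2}=-n(n-1)\tfrac{(c')^2}{c^2}.
\]

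For \ref{homo1}, I use \eqref{eq:deltaminus}. Here $k-(\tr_g k)g=c'g_0-n c' g_0=(1-n)\tfrac{c'}{c}\,g$, a function (constant on $M$) times $g$. The identity $\delta_g(fg)=-df$ then gives zero, since $c'/c$ depends only on $t$.

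For \ref{homo2}, I plug into \eqref{eq:evolutionexplicit}. Every term is a multiple of $g_0$, so I compare coefficients. On the kinetic side, $g'g^{-1}g'=\tfrac{(c')^2}{c}g_0$, the term $-\tfrac12\tr_g(g')g'$ contributes $-\tfrac n2\tfrac{(c')^2}{c}g_0$, and $-\tfrac1{4(n-1)}\ip{g'}{g'}{g}^{-}g$ contributes $+\tfrac n4\tfrac{(c')^2}{c}g_0$. These sum to $\tfrac{4-n}{4}\tfrac{(c')^2}{c}$. On the curvature side I get $2\alpha\tfrac{\tau_0}{n}-\tfrac{\alpha}{n-1}\tau_0-\alpha\lambda c$, and $\tfrac2n-\tfrac1{n-1}=\tfrac{n-2}{n(n-1)}$ yields \eqref{eq:creduced}.

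For \ref{homo3}, \eqref{eq:Hamdensity} gives
\[
\Ham_\lambda=-\tfrac{n(n-1)}{2}\tfrac{(c')^2}{c^2}+2\alpha\Bigl(\tfrac{\tau_0}{c}-\lambda(n-1)\Bigr).
\]
Setting this to zero and multiplying by $\tfrac{2c^2}{n(n-1)}$ gives $(c')^2=\tfrac{4\alpha\tau_0}{n(n-1)}c-\tfrac{4\alpha\lambda}{n}c^2$, which is \eqref{eq:cconstraint}.

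For \ref{homo4}, I substitute $(c')^2/c=A-Bc$ into \eqref{eq:creduced}, rewriting the constants as $\tfrac{\alpha(n-2)\tau_0}{n(n-1)}=\tfrac{n-2}{4}A$ and $\alpha\lambda=\tfrac n4 B$. The constant terms combine to $\tfrac{4-n}{4}A+\tfrac{n-2}{4}A=\tfrac12A$, and the linear terms to $-\tfrac{4-n}{4}Bc-\tfrac n4Bc=-Bc$. Conversely, on the constraint set the linear equation \eqref{eq:clinear} returns \eqref{eq:creduced} by the same substitution, so the two are equivalent there.

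The only delicate points are the curvature scaling under constant homothety and keeping the signs in $\ip{\cdot}{\cdot}{}^{-}$ straight. I expect the coefficient bookkeeping in \ref{homo2} to be the main place where errors could creep in.
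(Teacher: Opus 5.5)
Your proposal is correct and follows essentially the same route as the paper: direct substitution of $g_t=c(t)g_0$ into \eqref{eq:deltaminus}, \eqref{eq:evolutionexplicit} and \eqref{eq:Hamdensity} using $\rho(cg_0)=\rho(g_0)$ and $\tau(cg_0)=\tau_0/c$, and for \ref{homo4} the substitution $(c')^2/c=A-Bc$ together with $\frac{\alpha(n-2)\tau_0}{n(n-1)}=\frac{n-2}{4}A$ and $\alpha\lambda=\frac n4 B$. The paper also differentiates \eqref{eq:cconstraint} to obtain \eqref{eq:clinear} and notes that \eqref{eq:clinear} propagates the constraint; your pointwise substitution already suffices for the stated equivalence, and it avoids the paper's ``by continuity'' step, which needs care if $c'$ vanishes on an interval.
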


\begin{proof}
Write $k=g'=c'g_0=(c'/c)g_t$, so that $g_t^{-1}k=(c'/c)\,\mathrm{Id}$, and
$\tr_{g_t}k=n\,c'/c$. Hence $k_0=0$ and
$\delta^{-}_{g_t}k=\delta_{g_t}k+d(\tr_{g_t}k)$. Now
$\delta_{g_t}k=\delta_{g_t}((c'/c)g_t)=-d(c'/c)=0$ (as $c'/c$ is a
constant on $M$) and $d(\tr_{g_t}k)=d(nc'/c)=0$, so \ref{homo1} holds.

For \ref{homo2} and \ref{homo3} we use the known behavior of the curvatures
under scalings.
Under $g\mapsto cg$ ($c$ a positive constant) one has $\rho(cg)=\rho(g)$,
$\tau(cg)=c^{-1}\tau(g)$, $\dv_{cg}=c^{n/2}\dv_g$, and
$g_t^{-1}=c^{-1}g_0^{-1}$. Inserting $g=g_t=cg_0$, $k=c'g_0$ into the
explicit evolution equation \eqref{eq:evolutionexplicit}, and using
$kg^{-1}k=(c')^2 g_0 (cg_0)^{-1}g_0=\frac{(c')^2}{c}g_0$,
$\tr_g(k)=nc'/c$, $\ip{k}{k}{g}^{-}=\tr((g^{-1}k)^2)-(\tr g^{-1}k)^2
=n(c'/c)^2-n^2(c'/c)^2=-n(n-1)(c'/c)^2$, $\rho(g)=\tfrac{\tau_0}{n}g_0$,
$\tau(g)=\tau_0/c$, we get (all tensors being proportional to $g_0$):
\[
c''g_0=\frac{(c')^2}{c}g_0-\tfrac12\frac{nc'}{c}c'g_0
+\frac{n(n-1)}{4(n-1)}\frac{(c')^2}{c^2}\,cg_0
+2\alpha\frac{\tau_0}{n}g_0-\frac{\alpha}{n-1}\frac{\tau_0}{c}cg_0
-\alpha\lambda cg_0 .
\]
Dividing by $g_0$ and simplifying the $(c')^2/c$ terms,
$1-\tfrac n2+\tfrac n4=\tfrac{4-2n+n}{4}=\tfrac{4-n}{4}$, and the
potential terms $\frac{2\alpha\tau_0}{n}-\frac{\alpha\tau_0}{n-1}
=\alpha\tau_0\frac{2(n-1)-n}{n(n-1)}=\frac{\alpha(n-2)\tau_0}{n(n-1)}$,
we obtain \eqref{eq:creduced}.

For \ref{homo3}, $\Ham_\lambda=\tfrac12\ip{k}{k}{g}^{-}+2\alpha\tau_\lambda
=-\tfrac{n(n-1)}{2}\frac{(c')^2}{c^2}+2\alpha(\tau_0/c-\lambda(n-1))$;
setting this to zero and multiplying by $-\frac{2c^2}{n(n-1)}$ gives
$(c')^2=\frac{4\alpha\tau_0}{n(n-1)}c-\frac{4\alpha\lambda}{n}c^2=Ac-Bc^2$,
which is \eqref{eq:cconstraint}. 

Finally, for \ref{homo4}, differentiate
\eqref{eq:cconstraint} to get $2c'c''=Ac'-2Bcc'$, so $c''=\tfrac12A-Bc$
wherever $c'\neq0$, and by continuity everywhere. Conversely, one can check that
\eqref{eq:clinear} together with one value of \eqref{eq:cconstraint}
propagates the constraint (the function $(c')^2-Ac+Bc^2$ has vanishing
$t-$derivative by \eqref{eq:clinear}). The equivalence with
\eqref{eq:creduced} on the constraint set follows by substituting
$(c')^2=Ac-Bc^2$ into the $\frac{4-n}{4}(c')^2/c$ term, which becomes
$\frac{4-n}{4}\frac{Ac-Bc^2}{c}=\frac{4-n}{4}(A-Bc)$, and by rewriting the
remaining terms of \eqref{eq:creduced} in terms of $A,B$ through
$\frac{\alpha(n-2)\tau_0}{n(n-1)}=\frac{n-2}{4}A$ and
$\alpha\lambda=\frac{n}{4}B$, so that
\begin{align*}
c''=& \frac{4-n}{4}(A-Bc)+\frac{n-2}{4}A-\frac{n}{4}Bc \\
=& \frac{(4-n)+(n-2)}{4}\,A-\frac{(4-n)+n}{4}\,Bc \\
=& \tfrac12A-Bc,
\end{align*}
which is \eqref{eq:clinear}.
\end{proof}

\begin{remark}
The reduction to the \emph{linear} oscillator (or repulsor) \eqref{eq:clinear}
is the mini-superspace manifestation of Theorem~\ref{thm:GRjacobi}, as the dynamics
collapses to a single scalar degree of freedom $c$, for which the
constrained evolution is elementary. The sign of $B=4\alpha\lambda/n$
determines whether there is oscillation ($B>0$) or exponential behavior ($B<0$), 
while $A$ simply shifts the equilibrium.
\end{remark}

\begin{example}[The Milne universe. $\alpha=-1$, $\lambda=0$, $\tau_0<0$]
\label{ex:milne}
Take $n=3$ and $g_0$ hyperbolic with $\tau_0=-6$ (so that
$\rho(g_0)=-2g_0$). Then $A=\frac{4(-1)(-6)}{3\cdot2}=4$, $B=0$, and
\eqref{eq:clinear} is $c''=2$ with first integral $(c')^2=4c$
\eqref{eq:cconstraint}. For $t>0$ the solution with $c\to0$ as $t\to0^+$
is $c(t)=t^2$, which gives on the region $t>0$ (where $c>0$, so that
$c(t)g_0\in\Mm$)
\[
\widetilde g=-dt^2+t^2 g_0,
\]
the Milne form of (a wedge of) Minkowski space. Indeed,
$\widetilde g$ is flat, and this is the standard hyperbolic foliation of the
interior of the light cone. Here $S_0(g_t)=\int\tau\dv=\tau_0 c^{-1}\cdot
c^{3/2}V_0=\tau_0 c^{1/2}V_0\to0$ as $t\to0^+$, so the Jacobi factor of
Theorem~\ref{thm:GRjacobi} degenerates as $t\to0^+$. The limit $t=0$,
where $c=0$, lies on the boundary of $\Mm$ rather than on the curve, and
it is the vertex of the cone, a coordinate singularity of the Milne slicing.
\end{example}

\begin{example}[de Sitter. $\alpha=-1$, $\lambda>0$, round $S^3$]
\label{ex:desitter}
Take $n=3$, $g_0$ the round metric on $S^3$ with $\tau_0=6$
($\rho(g_0)=2g_0$), and $\lambda>0$. Then
$A=\frac{4(-1)(6)}{6}=-4$, $B=\frac{4(-1)\lambda}{3}=-\tfrac{4\lambda}{3}$,
and \eqref{eq:clinear} is $c''=-2+\tfrac{4\lambda}{3}c$ with
$(c')^2=-4c+\tfrac{4\lambda}{3}c^2$ \eqref{eq:cconstraint}. The solution
bounded below by its turning point is
\[
c(t)=\frac{3}{\lambda}\cosh^{2}\!\Bigl(\sqrt{\tfrac{\lambda}{3}}\,t\Bigr)
=\frac{3}{2\lambda}\Bigl(1+\cosh\sqrt{\tfrac{4\lambda}{3}}\,t\Bigr),
\]
so that $\widetilde g=-dt^2+c(t)g_0$ is the global de Sitter metric with
$\rho=\lambda\widetilde g$. The turning point $t=0$ is a moment of time
symmetry ($c'(0)=0$, that is, $k=0$). There $S_\lambda=0$ and the Jacobi
picture degenerates (Remark~\ref{rem:degeneracy}), in agreement with the
$O(4)$-symmetric bounce of de Sitter space.
\end{example}

\begin{example}[The round $S^{4}$]\label{ex:sphere}
The independence of the signature (Remark~\ref{rem:transportremarks}(c)) is
manifest on taking $\alpha=+1$, $\lambda>0$, $n=3$, $g_0$ round on $S^3$
($\tau_0=6$). Then $A=\frac{4(6)}{6}=4$, $B=\frac{4\lambda}{3}>0$, and
\eqref{eq:clinear} is the \emph{harmonic oscillator}
$c''=2-\tfrac{4\lambda}{3}c$, with
$(c')^2=4c-\tfrac{4\lambda}{3}c^2$. The solution
\[
c(t)=\frac{3}{\lambda}\cos^{2}\!\Bigl(\sqrt{\tfrac{\lambda}{3}}\,t\Bigr)
\]
gives the Riemannian ``spacetime'' $\widetilde g=dt^2+c(t)g_0$, which is
the round metric on $S^{4}$ of radius $\sqrt{3/\lambda}$ (Einstein, with
$\rho=\lambda\widetilde g$), presented as a warped product over an
interval with $S^3$ fibres. On the open interval
$t\in\bigl(-\tfrac{\pi}{2}\sqrt{3/\lambda},\,\tfrac{\pi}{2}\sqrt{3/\lambda}\bigr)$
one has $c>0$ and the curve lies in $\Mm$, and it is time symmetric at the
equator $t=0$ (where $c'=0$, $k=0$). As $t\to\pm\frac{\pi}{2}\sqrt{3/\lambda}$
both $c$ and $c'$ tend to $0$ (from
$c'=-\sqrt{3/\lambda}\,\sin(2\sqrt{\lambda/3}\,t)$, which vanishes together
with $c$ at the poles), so the metric $c(t)g_0$ leaves $\Mm$. The $S^3$
fibre collapses, and the poles are boundary points of the model, not
interior points of the curve of metrics. The smooth collapse is carried by the
$S^3$ radius $a(t)=\sqrt{c(t)}$, whose one-sided derivative is nonzero at
each pole. Thus, the same linear equation \eqref{eq:clinear} interpolates
between the Lorentzian (de Sitter, hyperbolic $c$) and the Riemannian
(sphere, trigonometric $c$) cases through the sign of $\alpha$.
\end{example}



\begin{thebibliography}{99}

\bibitem{ADM62} R.~Arnowitt, S.~Deser, C.W.~Misner,
\emph{The dynamics of general relativity}, in: L.~Witten (ed.),
Gravitation: An Introduction to Current Research, Wiley, New York, 1962,
pp.~227--265; reprinted in Gen. Relativity Gravitation \textbf{40}
(2008), 1997--2027.

\bibitem{Arn89} V.I.~Arnold, \emph{Mathematical Methods of Classical
Mechanics}, 2nd ed., Grad. Texts in Math. \textbf{60}, Springer, New York,
1989.

\bibitem{BHM13} M.~Bauer, P.~Harms, P.W.~Michor,
\emph{Sobolev metrics on the manifold of all Riemannian metrics},
J. Differential Geom. \textbf{94} (2013), no.~2, 187--208.

\bibitem{Bes87} A.L.~Besse, \emph{Einstein Manifolds},
Ergeb. Math. Grenzgeb. (3) \textbf{10}, Springer, Berlin, 1987.

\bibitem{CB09} Y.~Choquet-Bruhat, \emph{General Relativity and the
Einstein Equations}, Oxford Univ. Press, Oxford, 2009.

\bibitem{Cla10} B.~Clarke, \emph{The metric geometry of the manifold of
Riemannian metrics over a closed manifold}, Calc. Var. Partial
Differential Equations \textbf{39} (2010), no.~3--4, 533--545.

\bibitem{Cla13} B.~Clarke, \emph{The completion of the manifold of
Riemannian metrics}, J. Differential Geom. \textbf{93} (2013), no.~2,
203--268.

\bibitem{DeW67} B.S.~DeWitt, \emph{Quantum theory of gravity. I. The
canonical theory}, Phys. Rev. \textbf{160} (1967), 1113--1148.

\bibitem{Ebi70} D.G.~Ebin, \emph{The manifold of Riemannian metrics},
in: Global Analysis (Berkeley, 1968), Proc. Sympos. Pure Math. \textbf{15},
Amer. Math. Soc., Providence, 1970, pp.~11--40.

\bibitem{FG89} D.S.~Freed, D.~Groisser, \emph{The basic geometry of the
manifold of Riemannian metrics and of its quotient by the diffeomorphism
group}, Michigan Math. J. \textbf{36} (1989), no.~3, 323--344.

\bibitem{FM72a} A.E.~Fischer, J.E.~Marsden, \emph{The Einstein evolution
equations as a first-order quasi-linear symmetric hyperbolic system, I},
Comm. Math. Phys. \textbf{28} (1972), 1--38.

\bibitem{FM72b} A.E.~Fischer, J.E.~Marsden, \emph{The Einstein equations
of evolution --- a geometric approach}, J. Math. Phys. \textbf{13} (1972),
546--568.

\bibitem{GMi91} O.~Gil-Medrano, P.W.~Michor, \emph{The Riemannian manifold
of all Riemannian metrics}, Quart. J. Math. Oxford Ser. (2) \textbf{42}
(1991), no.~166, 183--202.

\bibitem{GM96} O.~Gil-Medrano, \emph{A Riemannian generalization of a
result of DeWitt concerning Ricci-flat Lorentz metrics}, J. Math. Phys.
\textbf{37} (1996), no.~8, 4017--4024.


\bibitem{GM01} O.~Gil-Medrano, \emph{Relationship between volume and
energy of vector fields}, Differential Geom. Appl. \textbf{15} (2001),
137--152.

\bibitem{Giu09} D.~Giulini, \emph{The superspace of geometrodynamics},
Gen. Relativity Gravitation \textbf{41} (2009), no.~4, 785--815.

\bibitem{Ham82} R.S.~Hamilton, \emph{The inverse function theorem of
Nash and Moser}, Bull. Amer. Math. Soc. (N.S.) \textbf{7} (1982),
no.~1, 65--222.

\bibitem{KM97} A.~Kriegl, P.W.~Michor, \emph{The Convenient Setting of
Global Analysis}, Math. Surveys Monogr. \textbf{53}, Amer. Math. Soc.,
Providence, 1997.

\bibitem{MR99} J.E.~Marsden, T.S.~Ratiu, \emph{Introduction to Mechanics
and Symmetry}, 2nd ed., Texts Appl. Math. \textbf{17}, Springer,
New York, 1999.

\bibitem{Mic20} P.W.~Michor, \emph{Manifolds of mappings and shapes},
in: The Legacy of Bernhard Riemann after One Hundred and Fifty Years,
Vol.~II, Adv. Lect. Math. \textbf{35.2}, Int. Press, Somerville, MA,
2016, pp.~459--486.

\bibitem{Mis57} C.W.~Misner, \emph{Feynman quantization of general
relativity}, Rev. Modern Phys. \textbf{29} (1957), 497--509.

\bibitem{Rin09} H.~Ringstr\"om, \emph{The Cauchy Problem in General
Relativity}, ESI Lect. Math. Phys., Eur. Math. Soc., Z\"urich, 2009.

\bibitem{Tre06} F.~Tr\`eves, \emph{Topological Vector Spaces,
Distributions and Kernels}, Dover, Mineola, 2006 (reprint of the 1967
Academic Press edition).

\bibitem{Val09} J.A.~Vallejo, \emph{Euler--Lagrange equations for
functionals defined on Fr\'echet manifolds}, J. Nonlinear Math. Phys.
\textbf{16} (2009), no.~4, 443--454.

\bibitem{Whe64} J.A.~Wheeler, \emph{Geometrodynamics and the issue of the
final state}, in: Relativity, Groups and Topology (Les Houches, 1963),
Gordon and Breach, New York, 1964, pp.~315--520.

\end{thebibliography}
\end{document}